\newtheorem{theorem}{Theorem}[section]
\newtheorem{corollary}[theorem]{Corollary}
\newtheorem{lemma}[theorem]{Lemma}
\newtheorem{proposition}[theorem]{Proposition}
\theoremstyle{definition}
\newtheorem{definition}[theorem]{Definition}
\newtheorem{remark}[theorem]{Remark}
\newcommand{\intvl}{\mathrm{I}}              	
\newcommand{\sphere}{\mathbb{S}^2}  					
\newcommand{\Sphere}{\mathbb{S}^3}  					
\DeclareMathOperator*{\nhd}{\mathcal{N}} 			
\DeclareMathOperator*{\Int}{int}  						
\DeclareMathOperator{\lk}{lk}									
\DeclareMathOperator{\ms}{MS}    							
\DeclareMathOperator{\is}{IS}    							
\DeclareMathOperator*{\dist}{d}   						
\DeclareMathOperator{\V}{V}   								
\begin{document}
\title{The Kakimizu complex of a connected sum of links}
\author{Jessica E. Banks}
\date{}
\maketitle

\begin{abstract}
We show that $|\ms(L_1\# L_2)|=|\ms(L_1)|\times|\ms(L_2)|\times\mathbb{R}$ when $L_1$ and $L_2$ are any non-split and non-fibred links. Here $\ms(L)$ denotes the Kakimizu complex of a link $L$, which records the taut Seifert surfaces for $L$. We also show that the analogous result holds if we study incompressible Seifert surfaces instead of taut ones.
\end{abstract}

\section{Introduction}

The Kakimizu complex of a link $L$, denoted $\ms(L)$, is a simplicial complex that records the taut Seifert surfaces for $L$. It is analogous to the curve complex of a compact, orientable surface. We will give the definition in Section \ref{msdefnsection}.

Kakimizu proved the following, building on work of Eisner in \cite{MR0440528}.

\begin{theorem}[\cite{MR1177053} Theorem B]
Let $L_1,L_2$ be knots, each not fibred but with a unique incompressible Seifert surface. Then $|\ms(L_1\#L_2)|$ is homeomorphic to $\mathbb{R}$.
\end{theorem}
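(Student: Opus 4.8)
\section*{Proof proposal}

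The plan is to analyse how a taut Seifert surface for $L_1\#L_2$ meets the decomposing sphere and then to reduce the whole complex to a count of arcs on an annulus. Let $\sphere$ be a sphere realising the connected sum, meeting $L=L_1\#L_2$ transversely in two points, and set $A=\sphere\cap X$, where $X=\Sphere\setminus\Int\nhd(L)$. This is the decomposing annulus: its two boundary curves are meridians of $L$, one on each side, and for a nontrivial connected sum it is essential. Given a taut (hence minimal-genus, hence incompressible) Seifert surface $F$, I would first isotope $F$ to meet $A$ minimally. An innermost-disk argument, using incompressibility of both $F$ and $A$, removes every closed curve of $F\cap A$, and an outermost-arc argument (pushing $F$ across the bigons cut off in $A$) removes every boundary-parallel arc. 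Since $\partial F$ is a longitude meeting each boundary meridian of $A$ exactly once, exactly one spanning arc $\gamma$ of $A$ can survive, so $F\cap A$ is a single essential spanning arc.

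Cutting $F$ along $\gamma$ (equivalently, along $\sphere$) splits it into surfaces on the two sides; capping $\gamma$ off with a disk in $\sphere$ converts each into a Seifert surface $\hat F_i$ for $L_i$. By Schubert additivity, $g(L_1\#L_2)=g(L_1)+g(L_2)$, so when $F$ is minimal genus each $\hat F_i$ is minimal genus and therefore incompressible; the hypothesis that $L_i$ carries a \emph{unique} incompressible Seifert surface then forces $\hat F_i$ to be isotopic to that surface $R_i$. Hence every taut Seifert surface for $L_1\#L_2$ is obtained by gluing the rigid pieces $R_1$ and $R_2$ along a spanning arc of $A$, and spanning arcs of an annulus are classified up to isotopy by a winding number $n\in\mathbb{Z}$. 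This yields a $\mathbb{Z}$-indexed family $\{F_n\}$ of candidate vertices.

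The step I expect to be the main obstacle is showing this correspondence is \emph{faithful}: that $F_n$ and $F_m$ are non-isotopic in $X$ for $n\neq m$, and, dually, that they can be made disjoint only when the arcs can. This is precisely where the non-fibred hypothesis enters. If some $L_i$ fibred, the monodromy flow would allow one to ``spin'' the surface across $A$, changing the winding number and collapsing the family to a single class (indeed $L_1\#L_2$ would then be fibred and $\ms(L_1\#L_2)$ a point). When neither $L_i$ fibres there is no such spinning isotopy; I would establish invariance of the winding by passing to the infinite cyclic cover and tracking the lifts of $A$, where the rigidity of $R_1$ and $R_2$ pins down the arc and shows that minimal intersection with $A$ is realised simultaneously with any disjointness of the surfaces.

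Granting faithfulness, the simplicial structure is a direct computation on the annulus. Two spanning arcs with winding numbers $n$ and $m$ have minimal geometric intersection $\max(0,|n-m|-1)$, so they are disjoint exactly when $|n-m|\le 1$, and no three are pairwise disjoint because any three integers contain two that differ by at least $2$. Therefore $F_n$ and $F_m$ span an edge of $\ms(L_1\#L_2)$ if and only if $|n-m|\le 1$, there are no $2$-simplices, and the complex is the bi-infinite path graph on the vertex set $\mathbb{Z}$. Its geometric realisation is homeomorphic to $\mathbb{R}$, as claimed.
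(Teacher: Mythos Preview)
Your outline is correct and is essentially the direct Eisner--Kakimizu argument that the paper cites as prior work rather than re-proves. The paper does not give a standalone proof of this statement; it recovers it as the special case of Theorem~\ref{connectsumthm} in which each $\ms(L_i)$ is a single point, so that $|\ms(L_1)|\times|\ms(L_2)|\times\mathbb{R}\cong\mathbb{R}$. When the factor complexes are trivial, the paper's map $\Psi$ of Section~\ref{verticessection} collapses to exactly your winding-number parametrisation of spanning arcs, Lemma~\ref{injectivelemma} is your faithfulness step, and Corollary~\ref{edgescor} with Proposition~\ref{mainedgesprop} is your adjacency computation. The additional apparatus --- the starred representatives of Section~\ref{repssection} and the orderings of Section~\ref{orderingsection} --- exists only to organise nontrivial $\ms(L_i)$ and to triangulate a genuine product; it is invisible in your case. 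Your route is the cleaner one for the stated hypothesis; the paper's buys the full product formula.

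Two points you flag as sketchy are real and are precisely where the paper does the work. For faithfulness, the product-region argument of Lemma~\ref{injectivelemma} is what turns ``$F_n\simeq F_m$ with $n\ne m$'' into ``the complement of $R_i$ on one side is a product'', contradicting non-fibredness; your infinite-cyclic-cover heuristic points at this but does not replace it. For the harder direction of adjacency --- that $F_n,F_m$ disjoint forces $|n-m|\le 1$ --- you need that two already-disjoint taut surfaces can be \emph{simultaneously} isotoped, remaining disjoint, so that each meets the decomposing annulus in a single arc; only then does the annulus combinatorics apply. That simultaneous normal form is the opening move in the proof of Proposition~\ref{mainedgesprop}, and it does not follow from putting each surface separately in minimal position with $A$.
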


In this paper we prove the following more general result, as well as the analogous result for incompressible Seifert surfaces.

\begin{restatable}{theorem}{connectsumthm}\label{connectsumthm}
Let $L_1,L_2$ be non-split, non-fibred, links in $\Sphere$, 
and let $L=L_1\#L_2$. Then $|\ms(L)|$ is homeomorphic to $|\ms(L_1)|\times|\ms(L_2)|\times\mathbb{R}$.
\end{restatable}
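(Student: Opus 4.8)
The plan is to fix a decomposing sphere $S$ for the connected sum and to show that every taut Seifert surface for $L$ can be cut along $S$ into taut Seifert surfaces for $L_1$ and $L_2$, with the cutting recording one extra integer parameter; this will give a combinatorial description of $\ms(L)$ as a ``product'' of $\ms(L_1)$, $\ms(L_2)$ and a triangulated line. Concretely, $S$ is a $2$-sphere in $\Sphere$ meeting $L$ transversely in two points, separating a ball containing the nontrivial part of $L_1$ from a ball containing $L_2$; I would first record that, for non-split links, such an $S$ is unique up to isotopy, so that the decomposition below is canonical. The single extra $\mathbb{R}$-factor should come from a swallow--follow torus $T$ associated to $S$: twisting a surface along $T$ (equivalently, spinning the band that joins the two summands once around the connect-sum component) is an operation preserving $L$ that produces a $\mathbb{Z}$-indexed family of taut surfaces from any single one, and it is this family that will realise the line.

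The technical heart is a normal-form result: I would show that any taut Seifert surface $R$ for $L$ is isotopic to one meeting $S$ in a single properly embedded arc, so that cutting along $S$ expresses $R$ as a boundary connected sum $R_1\natural R_2$ with each $R_i$ a Seifert surface for $L_i$. The argument should run by standard innermost-disc and outermost-arc exchanges: circles of $R\cap S$ are removed using irreducibility of $\Sphere$ together with incompressibility of $R$ (here non-splitness guarantees $R$ is connected and incompressible), and excess arcs are removed by isotopies across $S$. Since maximal Euler characteristic is additive under this decomposition, each $R_i$ will automatically be taut for $L_i$; conversely any boundary connected sum of taut surfaces, performed with a prescribed amount of twisting, yields a taut surface for $L$. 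This establishes a bijection between vertices of $\ms(L)$ and triples consisting of a vertex of $\ms(L_1)$, a vertex of $\ms(L_2)$, and an integer.

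Next I would analyse the simplicial structure. Given a collection of pairwise non-isotopic taut surfaces that can be made mutually disjoint, the goal is to put them simultaneously into normal form, so that their three coordinates can be compared. Disjointness of two surfaces should then translate exactly into: their $\ms(L_1)$-coordinates span a simplex, their $\ms(L_2)$-coordinates span a simplex, and their integer coordinates differ by at most one. The ``differ by at most one'' clause is what forces the third factor to be the standard triangulation of the line, whose realisation is $\mathbb{R}$; injectivity of the integer coordinate (that distinct twistings are genuinely non-isotopic) I would detect by an intersection or winding invariant measured against $T$. Granting these statements, the geometric realisation $|\ms(L)|$ is carried simplex-by-simplex onto the corresponding prisms in $|\ms(L_1)|\times|\ms(L_2)|\times\mathbb{R}$, yielding the asserted homeomorphism; the incompressible-surface version follows by the same scheme with tautness replaced by incompressibility throughout.

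I expect the main obstacle to be the interaction between the normal-form reduction and the simplicial (disjointness) structure: reducing a single surface to normal form is routine, but doing so \emph{simultaneously} for all surfaces filling a given simplex, while keeping them disjoint and controlling their integer coordinates, is delicate. In particular, pinning down the precise adjacency rule for the twisting parameter---showing that windings differing by one can be realised disjointly but windings differing by two cannot---together with proving that the parameter is a genuine isotopy invariant, is where the real work lies, and where the hypotheses of non-splitness and non-fibredness must be used essentially, non-fibredness ensuring both that each $\ms(L_i)$ is non-degenerate and that the twisting cannot be unwound.
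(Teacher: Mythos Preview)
Your outline broadly matches the paper's, but the adjacency rule you propose is wrong, and this is the crux of the argument rather than a detail to be filled in. You claim that two vertices of $\ms(L)$ are adjacent exactly when their $\ms(L_i)$-coordinates are adjacent or equal and their integer coordinates differ by at most one. This fails already for edges. Fix a partial order $\leq$ on adjacent vertices of each $\ms(L_i)$ (``below'' in the infinite cyclic cover relative to a basepoint, as in Przytycki--Schultens), take $R_{b,1}<R_{a,1}$ in $\ms(L_1)$ and $R_{b,2}<R_{a,2}$ in $\ms(L_2)$, and form $R_a=R_{a,1}\natural R_{a,2}$ with winding $0$ and $R_b=R_{b,1}\natural R_{b,2}$ with winding $1$. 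These are \emph{not} adjacent: both pieces of $R_b$ sit below the corresponding pieces of $R_a$, so the connecting band of $R_b$ can be made disjoint from that of $R_a$ only if it also sits below, giving winding difference $0$; to achieve winding difference $1$ the bands must cross. The correct rule couples the direction of the $\ms(L_1)$-comparison with the sign of the integer difference: assuming $R_{b,2}\leq R_{a,2}$, adjacency holds iff either $R_{b,1}\leq R_{a,1}$ and $n_a=n_b$, or $R_{a,1}\leq R_{b,1}$ and $n_b=n_a+1$.

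Because of this coupling you cannot describe $\ms(L)$ as a naive product whose simplices are prisms over simplices of the factors; your ``carried simplex-by-simplex onto the corresponding prisms'' step has no content without a choice of how to triangulate those prisms, and the correct choice is dictated by the coupling. The paper handles this by equipping each $\ms(L_i)$ with the Przytycki--Schultens order relative to a base vertex and invoking the Eilenberg--Steenrod product of \emph{ordered} simplicial complexes, for which $|\mathcal{X}_1\times\mathcal{X}_2|\cong|\mathcal{X}_1|\times|\mathcal{X}_2|$; the key identification is that $\ms(L)$ is simplicially isomorphic to $(\ms(L_1)\times\mathcal{Z})\times\ms(L_2)$, where the inner factor carries a twisted ordering encoding exactly the rule above. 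Without introducing these orderings you have no simplicial model of the right-hand side to compare with. Two smaller issues: non-splitness does not force a taut Seifert surface to be connected (boundary links are non-split), and for such links ``can be made disjoint'' is not the correct adjacency criterion---one must work with the infinite-cyclic-cover definition of $\ms(L)$ throughout.
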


For a distant union of two links $L_1$ and $L_2$, the taut Seifert surfaces for the two links do not interact. We may therefore consider the two links separately. For this reason, the `non-split' hypothesis in Theorem \ref{connectsumthm} is not a significant restriction. For the remainder of this paper we will only consider non-split links.

The case that one of the $L_i$ is fibred was dealt with by Kakimizu.

\begin{proposition}[\cite{MR2131376} Proposition 2.4] 
If $L_1$ is fibred then $\ms(L)\cong\ms(L_2)$.
\end{proposition}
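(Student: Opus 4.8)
The plan is to build an explicit simplicial isomorphism $\ms(L)\to\ms(L_2)$ by cutting each taut Seifert surface for $L$ along the connect-sum sphere and observing that the fibred summand $L_1$ can only ever contribute its unique fibre surface. Let $S\subset\Sphere$ be a sphere realising the connected sum: it meets $L$ in two points and bounds balls $B_1,B_2$, where capping $L\cap B_i$ with a trivial arc in $S$ recovers $L_i$. The first ingredient is a decomposition lemma: any taut Seifert surface $F$ for $L$ may be isotoped so that $F\cap S$ is a single arc. One removes intersection circles by innermost-disc surgeries (using irreducibility of the complement and incompressibility of $F$), then removes surplus intersection arcs by outermost-arc surgeries, each of which cannot lower $-\chi(F)$ since $F$ is taut. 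The upshot is that $F$ is the connected sum $F=F_1\natural F_2$ of surfaces $F_i=F\cap B_i$ glued along that arc, with $F_i$ a Seifert surface for $L_i$.

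Next I would pin down the vertex bijection. Since $\chi(F)=\chi(F_1)+\chi(F_2)-1$ and Seifert genus is additive under connected sum, $F$ is taut if and only if both $F_1$ and $F_2$ are taut. Because $L_1$ is fibred, its fibre surface $\Sigma_1$ is its \emph{unique} taut (indeed unique incompressible) Seifert surface up to isotopy, so every taut $F$ has $F_1$ isotopic to $\Sigma_1$. Hence $F\mapsto F_2$ is a well-defined map on vertex sets $\V\big(\ms(L)\big)\to\V\big(\ms(L_2)\big)$, with inverse $F_2\mapsto\Sigma_1\natural F_2$; one checks the decomposition and re-gluing are inverse up to isotopy, giving a bijection.

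It remains to match simplices, i.e.\ to show $\{F^{(i)}\}$ can be realised disjointly in $\Sphere\setminus L$ exactly when $\{F_2^{(i)}\}$ can be realised disjointly in $\Sphere\setminus L_2$. The forward direction is the easier one: given mutually disjoint representatives $F^{(i)}$, isotope so that each meets $S$ in a single arc and all these arcs are disjoint in $S$, then restrict to $B_2$ to obtain disjoint representatives $F_2^{(i)}$. The main obstacle is the converse. Here the fibredness of $L_1$ is exactly the tool that supplies flexibility: since $B_1\setminus L_1$ is a fibre bundle over $S^1$ with fibre $\Sigma_1$, I can take the surfaces on the $L_1$ side to be preimages of distinct points of $S^1$, which are automatically disjoint and cyclically ordered. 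The delicate step is to glue these ordered fibres to the given disjoint $F_2^{(i)}$ across a collar of $S$ so that the resulting closed-up surfaces stay disjoint; concretely one arranges the cyclic order of the fibres to agree with the order in which the arcs $F_2^{(i)}\cap S$ sit on $S$, and performs all of the arc-gluings inside a product neighbourhood of $S$ where the reordering can be absorbed without creating new intersections. Verifying that this collar construction can always be carried out coherently — so that disjointness on both sides genuinely yields disjointness of the glued family — is where I expect the real work to lie, and it is precisely the point at which the hypothesis that $L_1$ is fibred is indispensable.
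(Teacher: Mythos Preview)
The paper does not prove this proposition; it is quoted from Kakimizu's 2005 paper and invoked only to dispose of the fibred case before turning to the main theorem. So there is no in-paper proof to compare against, and your outline stands or falls on its own.

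Your approach is the natural one and is essentially correct, but two points deserve more care than you give them. First, when you write ``one checks the decomposition and re-gluing are inverse up to isotopy,'' you are hiding the step that actually uses fibredness at the vertex level: you must show that $\Sigma_1\natural F_2$ is well-defined, i.e.\ that different choices of how the gluing arc winds around $K_1\#K_2$ give isotopic surfaces. In the non-fibred setting this fails --- it is exactly the source of the $\mathbb{R}$ factor in the paper's main theorem --- and what rescues you here is that the fibration of $L_1$ furnishes an ambient isotopy carrying $\Sigma_1$ once around and back to itself, absorbing any winding. This is the same mechanism you later invoke for the simplex step, and it should be made explicit for injectivity of the vertex map as well.

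Second, your treatment of simplices is phrased in terms of disjoint realisability, which is Definition~\ref{firstmsdefn} in the paper. The paper, following Przytycki--Schultens, works with Definition~\ref{fullmsdefn}, where adjacency is measured by lifts to the infinite cyclic cover; the two notions can differ for boundary links (see the discussion around Figure~\ref{msdefnpic1}). If you intend your argument to apply under Definition~\ref{fullmsdefn}, you would need to verify the lift condition rather than mere disjointness --- though in practice the same fibration trick, combined with something like Lemma~\ref{arcintersectionlemma}, handles this. Your identification of the collar-gluing as ``where the real work lies'' is accurate; the cyclic-order matching you describe does go through, since the fibres over distinct points of $S^1$ can be chosen with boundaries at any prescribed heights on $\partial\!\nhd(K_1)$.
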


The idea of the proof of Theorem \ref{connectsumthm} is to first define a triangulation of $|\ms(L_1)|\times|\ms(L_2)|\times\mathbb{R}$, then choose a representative of each element of $\V(\ms(L_i))$, which we use to define a map between the vertices of the two simplicial complexes, and finally show that this map is an isomorphism.

Sections \ref{msdefnsection}--\ref{orderingsection} cover definitions and results we will need. Sections \ref{repssection}--\ref{edgessection} each constitute a step in the proof of Theorem \ref{connectsumthm}. In Section \ref{proofsection} these ideas are drawn together to complete the proof.
Finally, in Section \ref{incompressiblesection} we discuss incompressible Seifert surfaces.

I am grateful to Marc Lackenby for suggesting that this proof might be possible. This result has been proved independently by Bassem Saad.

\section{The definition of the Kakimizu complex}\label{msdefnsection}

\begin{definition}
A \textit{Seifert surface} for a link $L$ is a compact, orientable surface $R$ with no closed components such that $\partial R=L$ as an oriented link. The surface $R$ can also be seen as properly embedded in the link complement $\Sphere\setminus\nhd(L)$.
Say $R$ is \textit{taut} if it has maximal Euler characteristic among all Seifert surfaces for $L$.
\end{definition}

The Kakimizu complex was first defined as follows.

\begin{definition}[\cite{MR1177053} p225]\label{firstmsdefn}
For a link $L$, the \textit{Kakimizu complex} $\ms(L)$ of $L$ is a simplicial complex, the vertices of which are the ambient isotopy classes of taut Seifert surfaces for $L$. Vertices $R_0,\ldots,R_n$ span an $n$--simplex exactly when they can be realised disjointly.
\end{definition}

\begin{definition}
A metric is defined on the vertices of $\ms(L)$. The distance between two vertices is the distance in the 1--skeleton of $\ms(L)$ when every edge has length 1.
\end{definition}

In \cite{MR1177053}, Kakimizu showed that the distance between two vertices of $\ms(L)$ can be calculated by considering lifts of the two Seifert surfaces to the infinite cyclic cover of the link complement.

\begin{proposition}[\cite{MR1177053} Proposition 3.1]\label{msdistanceprop}
Let $L$ be a link, and let $M=\Sphere\setminus\nhd(L)$. Consider the infinite cyclic cover $\tilde{M}$ of $M$ (that is, the cover corresponding to the linking number $\lk\colon\pi_1(M)\to\mathbb{Z}$), and let $\tau$ be a generator for the group of covering transformations.

Let $R,R'$ be taut Seifert surfaces for $L$ that represent distinct vertices of $\ms(L)$. Choose a lift $V_0$ of $M\setminus R$ to $\tilde{M}$. For $n\in\mathbb{Z}$ let $V_n=\tau^n(V_0)$.

Take a lift $V_{R'}$ of $M\setminus R'$. Isotope the Seifert surface $R'$ in $M$ so as to minimise the value of $\max\{n:V_{R'}\cap V_n\neq\emptyset\}-\min\{n:V_{R'}\cap V_n\neq\emptyset\}$. Then $\dist_{\ms(L)}(R,R')=\max\{n:V_{R'}\cap V_n\neq\emptyset\}-\min\{n:V_{R'}\cap V_n\neq\emptyset\}$.
\end{proposition}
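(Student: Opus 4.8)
The plan is to reformulate the quantity in the statement as a metric and prove it agrees with $\dist_{\ms(L)}$ by two matching inequalities. First I would fix the decomposition $\tilde{M}=\bigcup_{n\in\mathbb{Z}}\overline{V_n}$ determined by $R$: the closures $\overline{V_n}$ are the copies of $M$ cut along $R$ glued end to end, the \emph{walls} $\overline{V_{n-1}}\cap\overline{V_n}$ are exactly the lifts of $R$, and $\tau$ shifts everything by one index. For a taut Seifert surface $S$ and a lift $V_S$ of $M\setminus S$, write $N_\pm(S)=\max/\min\{n:V_S\cap V_n\neq\emptyset\}$ and call $\sigma(S)=N_+(S)-N_-(S)$ the \emph{spread}. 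Because $V_S$ projects onto the compact set $M\setminus S$ it meets only finitely many sheets, so $\sigma(S)$ is finite; replacing $V_S$ by $\tau^k(V_S)$ merely shifts $N_\pm$, so $\sigma$ is independent of the chosen lift. After isotoping $S$ into minimal position with $R$ (using irreducibility of $M$ and incompressibility of the surfaces to remove trivial circles and boundary-parallel arcs of $S\cap R$, which guarantees the minimum is attained), I set $d^\ast(R,S)=\min\sigma(S)$, the minimum over the isotopy class. The goal is $d^\ast(R,R')=\dist_{\ms(L)}(R,R')$.

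For $d^\ast(R,R')\le\dist_{\ms(L)}(R,R')$ I would show that $S\mapsto d^\ast(R,S)$ is $1$--Lipschitz on the $1$--skeleton. Given an edge of $\ms(L)$, that is, distinct taut Seifert surfaces $S,S'$ realised disjointly, the content is the elementary observation that disjointness lets one choose lifts of $M\setminus S$ and $M\setminus S'$ that are nested so that the $R$--sheet interval of each contains that of the other enlarged by at most one sheet; hence $|d^\ast(R,S)-d^\ast(R,S')|\le 1$. Applying this along a geodesic $R=S_0,S_1,\dots,S_d=R'$ and using $d^\ast(R,R)=0$ telescopes to $d^\ast(R,R')\le d=\dist_{\ms(L)}(R,R')$. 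This direction is the softer one, essentially a bookkeeping argument about how the sheets of $S'$ sit inside those of $S$.

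The reverse inequality $\dist_{\ms(L)}(R,R')\le d^\ast(R,R')$ is the crux and requires an explicit realisation. Put $R$ and $R'$ in minimal position and re-index so that a lift $V_{R'}$ meets precisely $V_0,\dots,V_d$, where $d=d^\ast(R,R')$. I would construct intermediate taut Seifert surfaces $R=R_0,R_1,\dots,R_d=R'$ by oriented double-curve sums of $R$ and $R'$: resolving the circles and arcs of $R\cap R'$ compatibly with the orientations, using $R$ on the sheets near the base of the cover and $R'$ on the deeper sheets, so that the sheet interval of $R_j$ is shifted by one at each step. For each $R_j$ one must check that the resulting surface is embedded, has $\partial R_j=L$ with the correct orientation, descends $\tau$--equivariantly to $M$, and is disjoint from $R_{j-1}$ and $R_{j+1}$.

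The main obstacle is verifying that every $R_j$ is genuinely \emph{taut}. The double-curve sum satisfies $\chi(R_j)=\chi(R)+\chi(R')$ up to corrections supported along $R\cap R'$, and the point is that minimal position together with incompressibility forces these corrections to vanish, so no Euler characteristic is lost and each $R_j$ attains the maximal value. This is the step where the hypotheses on the surfaces are genuinely used, and where the careful cut-and-paste analysis — ensuring the exchanges create no compressible pieces and that consecutive surfaces really can be pushed off one another — must be carried out in full. Once this is established, the chain $R_0,\dots,R_d$ exhibits a path of length $d$, giving $\dist_{\ms(L)}(R,R')\le d^\ast(R,R')$ and hence equality.
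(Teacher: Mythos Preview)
The paper does not give a proof of this proposition; it is quoted from Kakimizu's original article. More to the point, immediately after stating it the paper explains (following Przytycki--Schultens) that the proposition is \emph{false} as stated: Figures~\ref{msdefnpic1} and~\ref{msdefnpic2} exhibit disjoint taut Seifert surfaces $R_a,R_b$ for a boundary link $L_\alpha$ with spread $2$ rather than $1$. You are attempting to prove a false statement, so your argument must contain a genuine gap.

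The gap is in your Lipschitz step. You assert that if $S,S'$ are realised disjointly then ``disjointness lets one choose lifts of $M\setminus S$ and $M\setminus S'$ that are nested so that the $R$--sheet interval of each contains that of the other enlarged by at most one sheet''. Apply this with reference surface $R=R_a$ to the edge $S=R_a$, $S'=R_b$. A lift of $M\setminus R_a$ occupies a single sheet, whereas the arc $\rho$ in Figure~\ref{msdefnpic2} forces any lift of $M\setminus R_b$ to meet three consecutive sheets; so $|d^\ast(R_a,R_a)-d^\ast(R_a,R_b)|=2$, not $\le 1$. The hidden assumption is that a lift of $M\setminus S'$ is bounded above and below by lifts of a \emph{single} surface; when $S'$ is disconnected, the bounding lifts come from different components of $S'$, and disjointness from $S$ gives no control on how far apart those components sit in the $R$--grading. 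Your ``elementary observation'' is only elementary for connected Seifert surfaces. The other inequality --- building a path of length $d^\ast$ by oriented cut-and-paste --- is the direction that survives, and the paper's remedy is not to repair the proof but to adopt Definition~\ref{fullmsdefn}, which redefines adjacency precisely so that the spread characterisation becomes true.
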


Przytycki and Schultens pointed out in \cite{przytycki-2010} that this result is not in fact true. It may fail in the case of a link that bounds a disconnected taut Seifert surface. Such links are called \textit{boundary links}. Note that this is a fairly unusual property for a link to have. In particular, all boundary links have Alexander polynomial 0. However, we will need to allow for this case in the proof of Theorem \ref{connectsumthm}, which makes the proof more complicated than it would be otherwise. In particular, it is difficult to control the situation where pairs of Seifert surfaces have components that can be made to coincide.

A reader who wishes to avoid this difficulty should take Definition \ref{firstmsdefn} as their working definition of $\ms(L)$, and may ignore the remainder of this section, Section \ref{mfldssection} from Lemma \ref{arcintersectionlemma} onwards and Lemma \ref{threearcslemma}, as well as some of the detail in the proofs of Corollary \ref{edgescor} and Proposition \ref{mainedgesprop}.

Figure \ref{msdefnpic1} gives an instructive example of how Proposition \ref{msdistanceprop} can fail. The link $L_{\alpha}$ shown consists of two linked copies of the knot $7_4$. This knot, which is the $(1,3,3)$ pretzel knot, has two taut Seifert surfaces. One is given by applying Seifert's algorithm to an alternating diagram. The other is given by performing a flype on the diagram that moves the single crossing across one of the lines of three crossings, and then applying Seifert's algorithm. Combining two copies of each of these surfaces gives the two disjoint taut Seifert surfaces $R_a,R_b$ for $L_{\alpha}$ shown in Figure \ref{msdefnpic1}. The arc $\rho$ shown in Figure \ref{msdefnpic2} runs from the top side of $R_a$ to the bottom side, and in doing so passes through $R_b$ in the positive direction twice. This means that if we first take lifts $V_n$ of $M\setminus R_b$ and then a lift $V$ of $M\setminus R_a$ as in Proposition \ref{msdistanceprop} we find that, for example, $V$ intersects $V_0,V_1,V_2$. The surfaces $R_a,R_b$ should therefore be distance 2 apart, instead of adjacent.
\begin{figure}[htbp]
\centering
\input{pictexfiles/msdefnpic1}
\caption{\label{msdefnpic1}}
\end{figure}
\begin{figure}[htbp]
\centering
\input{pictexfiles/msdefnpic2}
\caption{\label{msdefnpic2}}
\end{figure}

With Proposition \ref{msdistanceprop} and their own work in mind, Przytycki and Schultens redefine the Kakimizu complex as follows.

\begin{definition}[see \cite{przytycki-2010}]\label{fullmsdefn}
Let $L$ be a link, and let $M=\Sphere\setminus\nhd(L)$. Define the \textit{Kakimizu complex} $\ms(L)$ of $L$ to be the following flag simplicial complex. Its vertices are ambient isotopy classes of taut Seifert surfaces for $L$. Two vertices span an edge if they have representatives $R,R'$ such that a lift of $M\setminus R'$ to the infinite cyclic cover of $M$ intersects exactly two lifts of $M\setminus R$.
\end{definition}

\section{Products of simplicial complexes}\label{complexessection}

\begin{definition}[\cite{MR0050886} Chapter II Definition 8.7]
A simplicial complex $\mathcal{X}$ is \textit{ordered} if there is a binary relation $\leq$ on the vertices of $\mathcal{X}$ with the following properties.
\begin{itemize}
 \item[(P1)] $(u \leq v \textrm{ and } v\leq u)\Rightarrow u=v$.
 \item[(P2)] If $u,v$ are distinct, $(u \leq v \textrm{ or } v\leq u) \Leftrightarrow \textrm{ $u$ and $v$ are adjacent}$.
 \item[(P3)] If $u,v,w$ are vertices of a 2--simplex then $(u \leq v \textrm{ and } v\leq w)\Rightarrow u\leq w$.
\end{itemize}
\end{definition}

\begin{remark}
It is clear that in searching for such a relation we may use the following weaker version of (P2).
\begin{itemize}
 \item[(P2)$'$] If $u,v$ are adjacent then $(u\leq v \textrm{ or } v\leq u)$. 
\end{itemize}
To see this, note that we can remove all relationships between non-adjacent vertices.
\end{remark}

\begin{definition}[\cite{MR0050886} Chapter II Definition 8.8]
Let $\mathcal{X}_1,\mathcal{X}_2$ be ordered simplicial complexes. We define the simplicial complex $\mathcal{X}_1\times\mathcal{X}_2$. Its vertices are given by the set $\V(\mathcal{X}_1)\times\V(\mathcal{X}_2)$. Vertices $(u_0,v_0),\ldots,(u_n,v_n)$ span an $n$--simplex if the following hold.
\begin{itemize}
 \item $\{u_0,\cdots,u_n\}$ is an $m$--simplex of $\mathcal{X}_1$ for some $m\leq n$.
 \item $\{v_0,\cdots,v_n\}$ is an $m$--simplex of $\mathcal{X}_2$ for some $m\leq n$.
 \item The relation defined by $(u,v)\leq (u',v')\Leftrightarrow(u\leq u' \textrm{ and } v\leq v')$ gives a total linear order on $(u_0,v_0),\ldots,(u_n,v_n)$.
\end{itemize}
\end{definition}

\begin{remark}
The projection maps on the vertices extend to simplicial maps of the complexes.
\end{remark}

\begin{theorem}[\cite{MR0050886} Chapter II Lemma 8.9]\label{productcomplextheorem}
The map $|\mathcal{X}_1\times\mathcal{X}_2|\to|\mathcal{X}_1|\times|\mathcal{X}_2|$ induced by projection is a homeomorphism.
\end{theorem}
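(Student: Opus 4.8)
The plan is to show that the projection-induced map, call it $p$, is a continuous bijection that is a homeomorphism on the realization of each pair of simplices, and then to promote this to a global statement. First, $p$ is continuous: by the preceding remark the two coordinate projections $\mathcal{X}_1\times\mathcal{X}_2\to\mathcal{X}_i$ are simplicial, so each induces a continuous map on realizations, and $p$ is the pairing of these into $|\mathcal{X}_1|\times|\mathcal{X}_2|$. Since every point of $|\mathcal{X}_1|\times|\mathcal{X}_2|$ lies in the interior of $|\sigma|\times|\tau|$ for a unique pair of closed simplices $\sigma$ of $\mathcal{X}_1$ and $\tau$ of $\mathcal{X}_2$, and every point of $|\mathcal{X}_1\times\mathcal{X}_2|$ lies in a simplex projecting into such a pair, it suffices to prove the statement when $\mathcal{X}_1=\sigma$ and $\mathcal{X}_2=\tau$ are single ordered simplices, and then check that these local homeomorphisms are compatible with face inclusions.

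So I would reduce to this building block. Let $\sigma$ have vertices $u_0\le\cdots\le u_m$ and $\tau$ have vertices $v_0\le\cdots\le v_n$, each totally ordered. By the definition of the product complex, a collection $(u_{i_0},v_{j_0}),\ldots,(u_{i_k},v_{j_k})$ spans a simplex exactly when it forms a chain in the product order on $\{0,\ldots,m\}\times\{0,\ldots,n\}$, i.e.\ a strictly monotone lattice path in the grid. The maximal such chains are the staircase paths from $(0,0)$ to $(m,n)$, each using $m+n$ steps and hence spanning an $(m+n)$--simplex; this is precisely the classical staircase (shuffle) triangulation of the prism $|\sigma|\times|\tau|$. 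The task is then to verify directly that $p$ carries this triangulation onto $|\sigma|\times|\tau|$ homeomorphically.

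The cleanest way to see the building-block homeomorphism is through the partial-sum model. Writing a point of $|\sigma|$ in barycentric coordinates $\sum s_iu_i$ and setting $a_i=s_0+\cdots+s_{i-1}$ identifies $|\sigma|$ with $\{0\le a_1\le\cdots\le a_m\le 1\}$, and similarly $|\tau|$ with $\{0\le b_1\le\cdots\le b_n\le 1\}$. A point of $|\sigma|\times|\tau|$ is then a pair of weakly increasing sequences in $[0,1]$; merging them into a single sorted sequence determines a unique staircase simplex together with its barycentric weights, and I claim this assignment is exactly the inverse of $p$. Continuity of this inverse on the interior of each top simplex, together with agreement on shared faces, exhibits $p$ as a continuous bijection of $|\sigma|\times|\tau|$; as both spaces are compact and Hausdorff, $p$ is then automatically a homeomorphism on the block.

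The main obstacle is twofold. Combinatorially, one must check carefully that the product order produces precisely the staircase simplices and that these have disjoint interiors covering the whole prism; this is exactly where the axioms (P1)--(P3) of an ordering do their work, since they force the vertex set of each simplex of $\mathcal{X}_1\times\mathcal{X}_2$ to be totally ordered, which is what singles out monotone paths. Point-set-theoretically, the delicate point is that $|\mathcal{X}_1\times\mathcal{X}_2|$ carries the weak topology determined by its simplices, whereas the target $|\mathcal{X}_1|\times|\mathcal{X}_2|$ carries the product topology; to conclude globally I would glue the local homeomorphisms into a continuous bijection whose inverse is continuous on each $|\sigma|\times|\tau|$, and then reconcile these two topologies on the target so that both the bijection and its inverse are globally continuous. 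This reconciliation, rather than the combinatorics, is the step I expect to require the most care.
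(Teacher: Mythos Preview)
The paper does not prove this statement itself; it is quoted from Eilenberg--Steenrod, so there is no in-paper argument to compare against. Your outline is precisely the classical argument found in that reference: reduce to a single pair of closed simplices, identify the simplices of $\sigma\times\tau$ with monotone lattice paths in $\{0,\dots,m\}\times\{0,\dots,n\}$, and exhibit the homeomorphism on each prism via the cumulative-sum coordinates that convert barycentric weights into a merged sorted sequence. The combinatorial and local parts of what you describe are correct and complete.

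Your caution about the final point-set step is well placed, and this is in fact where the unqualified statement can fail rather than merely require care. For arbitrary infinite complexes carrying the weak (CW) topology, $p$ is always a continuous bijection, but it need not be a homeomorphism: Dowker's classical example shows the product topology on $|\mathcal{X}_1|\times|\mathcal{X}_2|$ can be strictly coarser than the topology coherent with the prisms $|\sigma|\times|\tau|$, so an inverse that is continuous on each compact block need not be globally continuous. The result does hold whenever at least one factor is locally finite (its realization is then locally compact, and products with locally compact spaces respect CW structure), or if one works in the compactly generated category. Thus the ``reconciliation'' you anticipate is not achievable by care alone; it needs an added hypothesis, and you should make that hypothesis explicit rather than leave it as a loose end. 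In the paper's applications this causes no trouble, since in each use of the theorem one of the factors is either $\mathcal{Z}$ or a complex known to be locally finite.
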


\begin{definition}
Denote by $\mathcal{Z}$ the simplicial complex with a vertex at each integer and an edge joining $n-1$ to $n$ for each $n\in\mathbb{Z}$, so that $|\mathcal{Z}|\cong\mathbb{R}$. Order the vertices of $\mathcal{Z}$ using the usual order $\leq$ on $\mathbb{Z}$.
\end{definition}

\section{Product regions and detecting adjacency}\label{mfldssection}

In this section we recall some results from 3--manifold theory that we will need. These include a number of related results; we include full proofs as later proofs are sensitive to the details. The concepts covered in this section are generally well-known, although some of the details are specific to the case at hand.

\begin{definition}[\cite{przytycki-2010} Section 3]
Let $M$ be a connected 3--manifold, and let $S,S'$ be (possibly disconnected) surfaces properly embedded in $M$. 

Call $S$ and $S'$ \textit{almost transverse} if, given a component $S_0$ of $S$ and a component $S'_0$ of $S'$, they either coincide or intersect transversely.
Call the surfaces \textit{almost disjoint} if, given a component $S_0$ of $S$ and a component $S'_0$ of $S'$, they either coincide or are disjoint.
Say they are \textit{$\partial$--almost disjoint} if $\partial S=\partial S'$ and, given a component $S_0$ of $S$ and a component $S'_0$ of $S'$, they either coincide or have disjoint interiors.

Say $S$ and $S'$ \textit{bound a product region} if the following holds. There is a compact surface $T$, a finite collection $\rho_T\subseteq \partial T$ of arcs and simple closed curves and a map of $N=(T\times\intvl)/\!\sim$ into $M$ that is an embedding on the interior of $N$ and 
has the following properties. 
\begin{itemize}
 \item $T\times\{0\}=S\cap N$ and $T\times\{1\}=S'\cap N$.
 \item $\partial N\setminus (T\times\partial\intvl)\subseteq\partial M$.
\end{itemize}
Here $\sim$ collapses $\{x\}\times\intvl$ to a point for each $x\in\rho_T$.
The \textit{horizontal boundary} of $N$ is $(T\times\partial\intvl)/\!\sim$.
Say $S$ and $S'$ have \textit{simplified intersection} if they do not bound a product region.
\end{definition}

\begin{proposition}[\cite{MR1315011} Proposition 4.8; see also \cite{MR0224099} Proposition 5.4 and Corollary 3.2]\label{productregionsprop}
Let $M$ be a $\partial$--irreducible Haken manifold.
Let $S,S'$ be incompressible, $\partial$--incompressible surfaces properly embedded in $M$ in general position. 
\begin{enumerate}
 \item If $S$ and $S'$ are isotopic then there is a product region between them.
 \item Suppose $S\cap S'\neq\emptyset$, but $S$ can be isotoped to be disjoint from $S'$. Then there is a product region between $S$ and $S'$.
\end{enumerate}
\end{proposition}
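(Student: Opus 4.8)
The plan is to prove part (2) first and then deduce part (1) from it, splitting throughout on whether $S$ and $S'$ already intersect. If $S$ and $S'$ are isotopic and disjoint, they are disjoint, isotopic, incompressible and $\partial$--incompressible surfaces in a $\partial$--irreducible Haken manifold, so the classical parallelity result for such surfaces (\cite{MR0224099} Corollary 3.2) shows they cobound a region homeomorphic to a product; together with the collapse $\sim$ recording where that region meets $\partial M$, this is precisely a product region with horizontal boundary $S\cup S'$. If instead $S$ and $S'$ are isotopic but meet, I would observe that a generic ambient isotopy carrying $S$ to $S'$ carries $S$ to a normal push-off of $S'$ disjoint from $S'$, so in particular $S$ can be isotoped off $S'$; thus part (1) reduces to the disjoint case just treated together with part (2).

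For part (2), put $S$ and $S'$ in general position, so that $S\cap S'$ is a disjoint union of simple closed curves and properly embedded arcs, and look for a \emph{removable} component. If some simple closed curve of $S\cap S'$ is inessential in $S$, I would take one innermost in $S$, bounding a disk $D\subseteq S$ with $\Int(D)\cap S'=\emptyset$; since $S'$ is incompressible and $M$ is irreducible, $\partial D$ must also bound a disk $D'\subseteq S'$, and $D\cup D'$ bounds a ball. Choosing the data so that this ball is innermost (using irreducibility to clear its interior of $S$ and $S'$) makes it a product region, with $T$ a disk and $\rho_T=\partial T$. Symmetrically, a $\partial$--inessential arc of $S\cap S'$ yields, via an outermost-arc argument using $\partial$--incompressibility of $S'$ and $\partial$--irreducibility of $M$, a $\partial$--parallel half-ball; this is again a product region, now with $\rho_T$ a sub-arc of $\partial T$ and part of $\partial N$ lying in $\partial M$.

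The remaining point — and the step I expect to be the main obstacle — is to show that under the hypotheses a removable intersection, or more generally a product region, must exist at all. I would argue by contradiction: if $S$ and $S'$ bounded no product region, then by the minimal-position theory for incompressible, $\partial$--incompressible surfaces in Haken $\partial$--irreducible manifolds (\cite{MR1315011} Proposition 4.8, following \cite{MR0224099} Proposition 5.4) they would realise their geometric intersection number in the present position. Since $S\cap S'\neq\emptyset$ this number would be positive, so $S$ could not be isotoped off $S'$, contradicting the hypothesis of part (2); hence a product region exists. This is where every hypothesis (Haken, $\partial$--irreducible, incompressible, $\partial$--incompressible) is genuinely used, and where one leans on Waldhausen's theorem that essential intersections of incompressible surfaces survive ambient isotopy. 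A secondary difficulty, which the paper's definition of a product region is designed to absorb, is the bookkeeping of $\sim$ and $\rho_T$ in the innermost and outermost constructions — matching the portion of $\partial N$ lying in $\partial M$, and, in the wider generality the paper needs, allowing components of $S$ and $S'$ to coincide.
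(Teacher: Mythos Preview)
The paper does not prove this proposition at all; it is quoted from the literature (Hempel \cite{MR1315011} Proposition 4.8 and Waldhausen \cite{MR0224099}) and is followed immediately by remarks on how it will be applied. There is therefore no proof in the paper to compare your attempt against.

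That said, your proposal has a genuine circularity in the crucial step. Your innermost-disk and outermost-arc reductions handling inessential intersection components are standard and fine, but they do not touch the real content: the case where every component of $S\cap S'$ is essential in both surfaces. There you argue by contradiction, invoking ``the minimal-position theory \ldots\ (\cite{MR1315011} Proposition 4.8, following \cite{MR0224099} Proposition 5.4)'' to assert that surfaces bounding no product region already realise their geometric intersection number. But \cite{MR1315011} Proposition~4.8 \emph{is} the proposition you are asked to prove, and Waldhausen's Proposition~5.4 is precisely the input from which Hempel deduces it. You have restated the conclusion rather than supplied an argument. The actual proof in Waldhausen uses a hierarchy for the Haken manifold and an induction on its length to show that any isotopy carrying $S$ off $S'$ can be broken into moves across product regions; none of that machinery appears in your sketch.
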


\begin{remark}
We will usually apply this proposition with $M=\Sphere\setminus\nhd(L)$ for a link $L$. If $L$ is neither a split link nor the unknot then $M$ is Haken and $\partial$--irreducible. Furthermore, if $S,S'$ are taut Seifert surfaces for $L$ then they are properly embedded, incompressible and $\partial$--incompressible. This remains true if we only consider some components of such surfaces. 
\end{remark}

\begin{remark}
If $\partial S$ and $\partial S'$ are transverse, the product region $N$ given by Proposition \ref{productregionsprop} is always embedded in $M$.
However, we will want to apply the proposition when Seifert surfaces $S$ and $S'$ may have components that either coincide or have boundaries that coincide. It continues to hold in this situation, but may result in a product region that is not embedded. 
There are two ways that this can occur. 

One option is that the components $S_0$ of $S$ and $S'_0$ of $S'$ that bound $N$ coincide. Then $N$ is the whole of $M$, the surfaces $S$ and $S'$ are connected, and the link $L$ is fibred with fibre $S$.
As we are only interested in non-fibred links, this case will not arise.

The second possibility is that $S_0$ and $S'_0$ do not coincide but their boundaries do.
Then $\partial N$ covers an entire component of $\partial\!\nhd(L)$, and $T\times\{0\}$ meets $T\times\{1\}$ along at least one component of $\partial T$.
See Figure \ref{edgespic1}.
\end{remark}

\begin{corollary}\label{isotopyrelbdycor}
Suppose $L$ is not fibred. If $S,S'$ are isotopic by an isotopy fixing their boundaries then there is a product region $N=(T\times\intvl)/\!\sim$ between them with $\rho_T=\partial T$. 
\end{corollary}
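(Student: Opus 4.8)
The plan is to produce the product region directly from Proposition \ref{productregionsprop}(1) and then to read off the collapsing locus $\rho_T$ from the coincidence set $S\cap S'$, using the rel-boundary hypothesis to see that this set is exactly $\partial S$. First I would apply Proposition \ref{productregionsprop}(1): since $S$ and $S'$ are isotopic, they cobound a product region $N=(T\times\intvl)/\!\sim$. Their boundaries coincide, so they are not in the transverse general position required by the proposition as literally stated; here I would invoke the extension recorded in the remark following Proposition \ref{productregionsprop}, under which the conclusion persists but $N$ need not be embedded along $\partial M$. Because $L$ is not fibred, the degenerate possibility in that remark where $N=M$ and $S$ is a fibre is excluded, leaving only the possibility that $T\times\{0\}$ and $T\times\{1\}$ are identified along pieces of $\partial T$. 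After first isotoping $S'$ off $S$ rel boundary so that their interiors are disjoint, I would take $N$ to be the full region between the two parallel surfaces, so that $T=S$ component by component and hence $\partial T=\partial S$.

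With $T=S$ fixed, the collapsing set $\rho_T$ is precisely the locus where the two horizontal boundary pieces $T\times\{0\}=S$ and $T\times\{1\}=S'$ are identified in $M$, that is, $S\cap S'$ read inside $N$. Since $N$ presents $S$ and $S'$ as the two horizontal boundaries of a product, their interiors are disjoint, so $S\cap S'=\partial S=\partial S'$. The hypothesis that the isotopy fixes the boundaries gives $\partial S=\partial S'$ as the same curves of $L$, so this coincidence set is all of $\partial S$. Therefore $\rho_T=\partial S=\partial T$, which is exactly the desired conclusion; equivalently, along $\partial S\times\intvl$ the isotopy is constant in the $\intvl$--direction, so each fibre $\{x\}\times\intvl$ over a boundary point is crushed to a point.

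The step I expect to be the main obstacle is the bookkeeping forced by boundary links. When $S$ and $S'$ are disconnected, some components may coincide outright or share boundaries, so both the identification $T=S$ and the computation $S\cap S'=\partial S$ must be carried out component by component, and one must rule out the intermediate situation of the remark in which only some components of $\partial T$ are collapsed. Equally, turning the abstract product region supplied by Proposition \ref{productregionsprop} into one whose horizontal pieces are genuinely all of $S$ and all of $S'$ (rather than proper subsurfaces) requires the parallelism of isotopic incompressible, $\partial$--incompressible surfaces in the $\partial$--irreducible Haken manifold $M$, and it is keeping this compatible with the non-embeddedness allowed along $\partial M$ that demands the most care.
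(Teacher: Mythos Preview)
Your proposal has a genuine gap at the step ``After first isotoping $S'$ off $S$ rel boundary so that their interiors are disjoint''. You assert this without justification, but it is essentially what the corollary is trying to establish. The standard way to simplify the intersection of two isotopic incompressible surfaces is to find a product region via Proposition~\ref{productregionsprop} and push one surface across it; but the product region the proposition hands you may well have $\rho_T\subsetneq\partial T$, in which case that push \emph{moves the boundary}. So you cannot assume the simplification can be carried out rel boundary. Even granting disjoint interiors, the product region supplied by the proposition is only bounded by subsurfaces of $S$ and $S'$ and may again have $\rho_T\subsetneq\partial T$; your identification ``$\rho_T=S\cap S'$'' is a description of what you would like, not a consequence of anything you have proved.

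The paper confronts exactly this obstruction, and its argument is genuinely different from yours. It proceeds by contradiction: take any product region $N$ between $S$ and $S'$; if $\rho_T\neq\partial T$ then the isotopy across $N$ carries $\partial S'$ once around a torus component of $\partial M$. Composing this with the hypothesised rel-boundary isotopy and lifting to the infinite cyclic cover, one sees that a lift $\tilde S$ is isotopic to its translate $\tau(\tilde S)$, and then another application of Proposition~\ref{productregionsprop} gives a product between them, contradicting that $L$ is not fibred. Thus the non-fibred hypothesis is used in an essential way to rule out boundary-moving product regions, not merely (as in your sketch) to exclude the degenerate case $N=M$.
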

\begin{proof}
Suppose no such product region exists. By Proposition \ref{productregionsprop} there is a product region $N$ between $S$ and $S'$. This product region $N$ meets one component of each of $S$ and $S'$, and the boundaries of these components coincide. By deleting other components if necessary, we may assume $S,S'$ are connected. 

Let $\tilde{S}$ be a lift of $S$ to the infinite cyclic cover. The isotopy from $S$ to $S'$ lifts to an isotopy from $\tilde{S}$ to a lift $\tilde{S}'$ of $S'$. Note that $\partial\tilde{S}=\partial\tilde{S}'$. By hypothesis, the isotopy from $S'$ to $S$ defined by the product region $N$ moves the boundary of $S'$. As the boundaries of $S$ and $S'$ coincide, this isotopy therefore takes each component of $\partial S'$ once around the torus component of $\partial M$ on which it lies. Hence the isotopy defined by $N$ lifts to an isotopy from $\tilde{S}'$ to either $\tau(\tilde{S})$ or $\tau^{-1}(\tilde{S})$. Thus $\tilde{S}$ is isotopic to $\tau(\tilde{S})$. Again by Proposition \ref{productregionsprop} there is a product region between $\tilde{S}$ and $\tau(\tilde{S})$. This contradicts that $L$ is not fibred.
\end{proof}

\begin{remark}
The condition that $L$ is not fibred is not actually necessary for this result, only for our proof.
\end{remark}

\begin{proposition}[\cite{MR0224099} Corollary 3.2]\label{surfaceinproductprop}
Suppose surfaces $S_0,S_1$ bound a product region $N$. Let $S'$ be an incompressible surface that is transverse to $S_0,S_1$. Suppose $S'\cap \Int(N)\neq\emptyset$ but $S'\cap (S_1\cap N)=\emptyset$. Then each component of $S'\cap\Int(N)$ bounds a product region with a subsurface of $S_0$. In particular, if additionally $S'\cap (S_0\cap N)=\emptyset$ then this component of $S'$ is parallel to those of $S_0,S_1$ that bound $N$.
\end{proposition}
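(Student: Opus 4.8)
The plan is to exploit the product structure of $N=(T\times\intvl)/\!\sim$ directly. Fix a component $C$ of $S'\cap\Int(N)$; since $S'$ is a Seifert surface it is orientable, so $C$ is two-sided. Write $p\colon N\to T$ for the bundle projection, whose fibres are the (possibly collapsed) arcs $\{x\}\times\intvl$. As $N$ is embedded on its interior and $S'$ is transverse to $S_0,S_1$, the surface $C$ is properly embedded in $N$ with $\partial C$ lying on $(S_0\cap N)\cup\big(\partial N\setminus(T\times\partial\intvl)\big)$, the second piece being contained in $\partial M$. By hypothesis $C$ is disjoint from the top $T\times\{1\}=S_1\cap N$.

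I would first check that $C$ is incompressible and $\partial$--incompressible in $N$, using also that $S'$ is $\partial$--incompressible as holds for the taut Seifert surfaces to which we apply this. This step needs an argument because $S'$ is only assumed incompressible in $M$, while $C$ is the piece of $S'$ cut off by $S_0$ and $S_1$: a compressing disk for $C$ in $N$ lies in $M$ and so is a compressing disk for $S'$, whence its boundary bounds a disk in $S'$, but one must still push this disk off $S_0\cup S_1$ to realise it inside $C$. With this in hand, the classification of incompressible, $\partial$--incompressible surfaces in $\intvl$--bundles (\cite{MR0224099} Corollary 3.2) applies: after an ambient isotopy of $N$, the surface $C$ is horizontal or vertical. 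A vertical surface is a union of fibres, and a fibre not collapsed by $\sim$ has an endpoint on $T\times\{1\}$; thus a vertical surface would meet $S_1\cap N$, contradicting $C\cap(S_1\cap N)=\emptyset$. Hence $C$ is horizontal.

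Being horizontal, $C$ is transverse to every fibre, and since $N$ is a product $\intvl$--bundle the map $p|_C$ is a homeomorphism onto a subsurface $p(C)\subseteq T$; the $\intvl$--coordinate then sweeps out a product region between $C$ and $p(C)\times\{0\}\subseteq S_0$, as required. If in addition $S'\cap(S_0\cap N)=\emptyset$, then $\partial C$ misses the bottom as well and lies entirely in $\partial N\setminus(T\times\partial\intvl)$, forcing $p(C)$ to be a whole component of $T$; then $C$ is parallel to the components of $S_0$ and of $S_1$ that bound $N$. I expect the main obstacle to be the first step --- verifying that the pieces $C$ are genuinely incompressible and $\partial$--incompressible in $N$ rather than merely in $M$ --- together with the care required near $\rho_T$, where the fibration degenerates and $N$ fails to be embedded.
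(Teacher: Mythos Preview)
The paper does not give its own proof of this proposition: it is quoted directly from Waldhausen \cite{MR0224099} as a black box, so there is nothing to compare your argument against in the paper itself.

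Your outline is essentially the standard Waldhausen argument and is sound in its broad shape. Two points deserve care. First, the step you flag yourself---that $C$ is incompressible and $\partial$--incompressible in $N$ rather than just in $M$---does need the usual innermost-disc and outermost-arc argument to push the disc furnished by incompressibility of $S'$ off $S_0$ and into $N$; you have identified the issue but not actually carried it out. Second, your claim that for a horizontal surface $p|_C$ is a \emph{homeomorphism} onto a subsurface of $T$ is not automatic: in a product $\intvl$--bundle a horizontal surface is a priori only a finite cover of its image in the base. When $\partial C$ meets $T\times\{0\}$ this does force degree~$1$ locally and hence globally on that component of $T$, but in the ``in particular'' case where $C$ misses both $T\times\{0\}$ and $T\times\{1\}$ you need an extra reason (for instance, a homological argument using that in the applications $C$ is a component of a Seifert surface) to exclude a nontrivial cover. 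The paper sidesteps all of this by citing Waldhausen.
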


\begin{proposition}[\cite{przytycki-2010} Corollary 3.4]\label{disjointinteriorsprop}
Let $M_a,M_b$ be proper 3--submanifolds of $\tilde{M}$ such that $\partial M_a$ and $\partial M_b$ are unions of lifts of minimal genus Seifert surfaces that are almost transverse with simplified intersection. If $M_a$ can be isotoped to have interior disjoint from $M_b$ then the interior of $M_a$ is disjoint from $M_b$.
\end{proposition}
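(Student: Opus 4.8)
The plan is to reduce this three-dimensional statement to the two-dimensional results of Section \ref{mfldssection}, applied to the boundary surfaces. Throughout I take the given isotopy to be an ambient isotopy $h_t$ of $\tilde M$ with $h_0=\mathrm{id}$ and $\Int(h_1(M_a))$ disjoint from $M_b$, and I write $M_a'=h_1(M_a)$. Note first that since $\Int(M_a')$ misses $M_b$, the surface $\partial M_a'$ is automatically almost disjoint from $\partial M_b$: a boundary point of $M_a'$ lying in $\Int(M_b)$ would be a limit of interior points of $M_a'$ inside $M_b$, which is impossible.

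The first real step is to show that $\partial M_a$ and $\partial M_b$ are already almost disjoint. Suppose a component $F$ of $\partial M_a$ and a component $G$ of $\partial M_b$ intersect transversely (the only alternative to coinciding, by almost transversality). Its image $F'=h_1(F)$ is a component of $\partial M_a'$, so by the previous paragraph $F'$ either coincides with $G$ or is disjoint from it. In the first case $F$ is isotopic to $G$ and Proposition \ref{productregionsprop}(1) produces a product region between $\partial M_a$ and $\partial M_b$; in the second case $F$ is isotopic off $G$ and Proposition \ref{productregionsprop}(2) does the same. Either outcome contradicts the hypothesis of simplified intersection, so no such transverse pair exists and $\partial M_a$, $\partial M_b$ are almost disjoint.

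Next I would derive the interior disjointness. Cutting $\tilde M$ along $\Sigma=\partial M_a\cup\partial M_b$ (identifying coinciding components) yields a collection of complementary regions, and because all boundaries lie in $\Sigma$, both $M_a$ and $M_b$ are unions of closures of these regions. Interior overlap then means some region lies in both, so I would take $Y$ to be a component of the closure of $\Int(M_a)\cap\Int(M_b)$. Its frontier decomposes into a subsurface $A\subseteq\partial M_a$ and a subsurface $B\subseteq\partial M_b$, meeting only along $\partial\tilde M$ or coinciding curves. Since $h_t$ carries all of $M_a$ off $M_b$, the face $A$ is isotoped either off $B$ or onto it; applying Proposition \ref{productregionsprop} (or Proposition \ref{surfaceinproductprop} to the resulting structure) shows $Y$ is a product region whose two horizontal faces are a component of $\partial M_a$ and a component of $\partial M_b$. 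This is a product region between $\partial M_a$ and $\partial M_b$, once more contradicting simplified intersection, so no overlapping region exists and $\Int(M_a)\cap M_b=\emptyset$.

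The main obstacle is this last step, where two-dimensional information about the boundaries must be made to control the three-dimensional overlap. The delicate points are precisely those flagged in Section \ref{mfldssection}: when a face of $Y$ is a \emph{coinciding} component $F=G$, the product region supplied by Proposition \ref{productregionsprop} need not be embedded, and I would have to exclude the degenerate possibility that $Y$ exhausts an entire complementary region (the fibred situation) by appealing to Corollary \ref{isotopyrelbdycor} together with the standing non-fibred assumption. I would also need to verify that the product-region results remain valid in the non-compact cover $\tilde M$ for these unions of lifts, which is where the genus-minimality—hence incompressibility and $\partial$--incompressibility—of the Seifert surfaces is essential.
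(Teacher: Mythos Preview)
The paper does not prove this proposition---it is quoted from \cite{przytycki-2010} without argument---so there is no in-paper proof to compare against and your attempt must be judged on its own merits. Your Step~1 is essentially correct: from $\Int(M_a')\cap M_b=\emptyset$ one sees that no component of $\partial M_a'$ can cross $\partial M_b$ transversally, so after a small perturbation $h_1(F)$ is disjoint from $G$ and Proposition~\ref{productregionsprop}(2) contradicts simplified intersection. You are also right to flag the non-compactness of $\tilde M$.

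Step~2, however, has a genuine gap. After Step~1 the boundaries are almost disjoint, so every face of your region $Y$ is an entire component of $\partial M_a$ or $\partial M_b$; your $A$ consists of components $F\subset\partial M_a$ lying in $\Int(M_b)$, and $B$ of components of $\partial M_b$ lying in $\Int(M_a)$. You assert that ``$h_t$ carries $A$ either off $B$ or onto it,'' but the ambient isotopy only tells you $h_1(F)\cap\Int(M_b)=\emptyset$; it says nothing about the relationship between $F$ and the particular surfaces in $B$. Worse, $F$ and each component of $B$ are already disjoint, so the hypothesis $S\cap S'\neq\emptyset$ of Proposition~\ref{productregionsprop}(2) fails for this pair and the proposition cannot be invoked. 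Nor is there any reason $Y$ should have exactly one face from each side, so even the target ``$Y$ is a product region between a component of $\partial M_a$ and one of $\partial M_b$'' is not obviously correct.

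What is missing is a separation argument in $\tilde M$. Each lift of a Seifert surface separates $\tilde M$, and once the components of $\partial M_a\cup\partial M_b$ are almost disjoint with no product regions between them, they acquire a well-defined linear ``height'' order that is invariant under ambient isotopy (any change of order would force two of them to be parallel). The submanifolds $M_a$ and $M_b$ are then unions of slabs for this order, so their overlap is already determined and agrees with that of $M_a'$ and $M_b$. Equivalently, one shows directly that a component $F\subset\Int(M_b)$ isotopic to a surface outside $M_b$ must be parallel to a component of $\partial M_b$; that parallelism---not the global structure of $Y$---is what contradicts simplified intersection.
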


\begin{lemma}[\cite{przytycki-2010} Lemma 3.5]\label{maketransverselemma}
Let $R_1,R_2,R_3$ be minimal genus Seifert surfaces. Then they can be isotoped to be pairwise almost transverse and have pairwise simplified intersection.
\end{lemma}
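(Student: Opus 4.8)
The plan is to reach the required position in three stages: first put the three surfaces into pairwise almost transverse position, then fix one pair in simplified intersection, and finally isotope the remaining surface into simplified intersection with \emph{both} of the others at once, using Proposition~\ref{surfaceinproductprop} to control the unavoidable interaction. Throughout, $M=\Sphere\setminus\nhd(L)$ is Haken and $\partial$-irreducible, and every component of each $R_i$ is incompressible and $\partial$-incompressible, so Propositions~\ref{productregionsprop} and~\ref{surfaceinproductprop} apply to any selection of components.

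First I would perturb $R_1,R_2,R_3$ by a standard general-position isotopy so that, for every pair, two components either coincide or meet transversely; any two isotopic components I arrange to coincide, which costs nothing for almost transversality. Thus I may assume the three surfaces are pairwise almost transverse, and it remains to destroy all product regions. Now work with the pair $R_1,R_2$: by definition, while they fail to have simplified intersection they bound a product region $N=(T\times\intvl)/\!\sim$, and I push the piece $T\times\{0\}\subseteq R_2$ across $N$ just past $T\times\{1\}\subseteq R_1$. When $\rho_T$ meets $\Int(M)$ this removes genuine intersection curves and strictly reduces $|R_1\cap R_2|$; since this is a non-negative integer the process terminates, and if $R_1,R_2$ become disjoint while still bounding a product region (so $\rho_T=\partial T$) they are parallel and I isotope them to coincide. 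From now on I \emph{fix} $R_1$ and $R_2$ and move only $R_3$, so the pair $(R_1,R_2)$ stays simplified automatically.

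The heart of the matter is to isotope $R_3$ into simplified intersection with $R_1$ \emph{and} with $R_2$ simultaneously. I run a complexity argument on $c(R_3)=|R_3\cap R_1|+|R_3\cap R_2|$, the total number of intersection arcs and circles. Suppose $R_3$ still bounds a product region with one of them, say $R_1$, and choose such a region $N$ whose interior is innermost, so that $R_1\cap\Int(N)=\emptyset$ and $R_2$ meets $N$ as simply as possible. Write $S_0=T\times\{0\}\subseteq R_3$ and $S_1=T\times\{1\}\subseteq R_1$. The obstacle is that $R_2$ may pass through $\Int(N)$, so pushing $R_3$ across $N$ alters $R_3\cap R_2$. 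This is exactly the setting of Proposition~\ref{surfaceinproductprop} with $S'=R_2$: each component of $R_2\cap\Int(N)$ bounds a product region with a subsurface of $S_0$, and those missing $S_0\cap N$ are parallel to the product slices. Hence $R_2\cap N$ is a union of slice-parallel pieces, and I can perform the push of $R_3$ across $N$ fibrewise in the $\intvl$-direction: this removes the product region with $R_1$, strictly decreasing $|R_3\cap R_1|$, while sweeping $R_3$ through slice-parallel copies of $R_2$ without creating a new product region between $R_3$ and $R_2$, so $c(R_3)$ strictly decreases. (If the offending region is with $R_2$, interchange $R_1$ and $R_2$.) As $c(R_3)$ is a non-negative integer the process terminates, leaving $R_3$ simplified with both fixed surfaces; finally, isotoping any component of $R_3$ that has become parallel to a component of $R_1$ or $R_2$ so as to coincide with it restores almost transversality and completes the proof.

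The main obstacle is precisely the interaction handled in the last stage: ensuring that the isotopy of $R_3$ removing a product region with one fixed surface does not spoil simplified intersection with the other. All the force comes from Proposition~\ref{surfaceinproductprop}, which pins the third surface inside a product region down to parallel slices so that the push can be carried out in the fibre direction. The delicate bookkeeping lies in arranging the innermost region $N$ so that the hypothesis $R_2\cap(S_1\cap N)=\emptyset$ of Proposition~\ref{surfaceinproductprop} holds (splitting into cases according to whether $R_2$ meets the horizontal slices), in verifying that $c(R_3)$ \emph{strictly} decreases rather than merely failing to increase, and in separating off the coincident- and $\partial$-almost-disjoint-component cases where intersection cannot be reduced and components must instead be made to coincide.
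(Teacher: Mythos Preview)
The paper does not prove this lemma at all: it is quoted verbatim from \cite{przytycki-2010} and used as a black box, so there is no ``paper's own proof'' to compare against. That said, your outline is the right shape for such an argument, but it has a real gap at the point you yourself flag as ``delicate bookkeeping''.

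The issue is your application of Proposition~\ref{surfaceinproductprop}. You take a product region $N$ between a piece $S_0\subseteq R_3$ and a piece $S_1\subseteq R_1$, and you want to conclude that $R_2\cap N$ consists of slice-parallel pieces. But Proposition~\ref{surfaceinproductprop} requires $R_2\cap(S_1\cap N)=\emptyset$, and there is no reason for this to hold: $R_1$ and $R_2$ have simplified intersection, not empty intersection, and the curves of $R_1\cap R_2$ may well lie in $S_1$. Choosing $N$ ``innermost'' with respect to $R_1$ does not help, since that only controls $R_1\cap\Int(N)$, not $R_2\cap S_1$. In the bad case, a component of $R_2\cap N$ can meet both $S_0$ and $S_1$ (for instance as a vertical annulus $\gamma\times\intvl$), and then Proposition~\ref{surfaceinproductprop} says nothing. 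You then have no control over how $|R_3\cap R_2|$ changes when you push $R_3$ across $N$, so the claimed strict decrease of $c(R_3)$ is unproven. Your final paragraph concedes exactly this but does not resolve it.

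What is actually needed here is the full classification of incompressible, $\partial$-incompressible surfaces in an $\intvl$-bundle (Waldhausen), which says that $R_2\cap N$, after an isotopy in $N$, is a union of horizontal and vertical pieces. With that in hand, the push across $N$ replaces the $R_3$--$R_2$ intersection curves on $S_0$ by those on $S_1$ (same number, coming from vertical pieces) and removes curves of $R_3\cap R_1$, so $c(R_3)$ does drop; horizontal pieces of $R_2$ in $N$ would give product regions between $R_1$ and $R_2$, contradicting their simplified intersection unless they are entire slices. That is the missing ingredient, and without it the argument is incomplete.
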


\begin{proposition}[\cite{przytycki-2010} Proposition 3.2]\label{realisedistanceprop}
In the notation of Proposition \ref{msdistanceprop}, if $R'$ is almost transverse to and has simplified intersection with $R$ then it minimises $\max\{n:V_{R'}\cap V_n\neq\emptyset\}-\min\{n:V_{R'}\cap V_n\neq\emptyset\}$.
\end{proposition}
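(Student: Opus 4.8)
The plan is to argue by contradiction: assuming $R'$ fails to minimise the width $\max\{n:V_{R'}\cap V_n\neq\emptyset\}-\min\{n:V_{R'}\cap V_n\neq\emptyset\}$, I will exhibit a position of (the lift of) $M\setminus R'$ whose interior can be pushed off a block of consecutive lifts of $R$, and then invoke Proposition \ref{disjointinteriorsprop} to contradict the fact that the \emph{given} position of $R'$ already meets that block. The point is that Proposition \ref{disjointinteriorsprop} only demands almost transversality and simplified intersection of the \emph{given} configuration, which is exactly the hypothesis available, while the isotopy witnessing disjointness may be arbitrary.

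First I would fix notation in $\tilde{M}$. Write $\tilde{R}_n$ for the lift of $R$ separating $V_{n-1}$ from $V_n$, so $V_n$ is the region between $\tilde{R}_n$ and $\tilde{R}_{n+1}$; note that $\overline{V_{R'}}$ is a compact submanifold of $\tilde{M}$ whose frontier is a union of lifts of $R'$. Set $a=\min\{n:V_{R'}\cap V_n\neq\emptyset\}$ and $b=\max\{n:V_{R'}\cap V_n\neq\emptyset\}$, so the width is $w=b-a$, and suppose for contradiction that some Seifert surface $S$ isotopic to $R'$ has width $w_{\min}<w$. Since both the $\max$ and the $\min$ shift by one under the covering translation $\tau$, every lift of $M\setminus S$ has the same span, namely $w_{\min}$. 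The ambient isotopy of $M$ carrying $R'$ to $S$ lifts to a $\tau$--equivariant ambient isotopy $\tilde{\phi}$ of $\tilde{M}$; put $W=\tilde{\phi}_1(\overline{V_{R'}})$, a lift of (the cut-open) $M\setminus S$, so $W$ meets precisely the layers $V_c,\dots,V_{c+w_{\min}}$ for some $c$.

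Because $w_{\min}<b-a$, the block $[c,c+w_{\min}]$ cannot contain $[a,b]$ (otherwise $c\le a$ and $c+w_{\min}\ge b$ would force $w_{\min}\ge b-a=w$). Hence either $c+w_{\min}<b$ or $c>a$. In the first case set $M_b=\overline{V_b\cup\cdots\cup V_{b+K}}$, and in the second set $M_b=\overline{V_{a-K}\cup\cdots\cup V_a}$, choosing $K$ large enough that $M_b$ is compact and meets $\overline{V_{R'}}$; in either case $\Int(W)\cap M_b=\emptyset$. Thus, with $M_a=\overline{V_{R'}}$, the submanifold $M_a$ can be isotoped (via $\tilde{\phi}$) to have interior disjoint from $M_b$. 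Now I apply Proposition \ref{disjointinteriorsprop}: its hypotheses hold for the given positions, since $\partial M_a$ is a union of lifts of $R'$, $\partial M_b$ a union of lifts of $R$, and these are almost transverse with simplified intersection because $R$ and $R'$ are. The proposition then forces $\Int(M_a)=\Int(\overline{V_{R'}})$ to be disjoint from $M_b$ already; but $V_{R'}$ meets $V_b$ (respectively $V_a$), which lies in $M_b$ --- a contradiction. Hence $w=w_{\min}$, so $R'$ minimises the width.

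I expect the main obstacle to be the boundary-link situation flagged in Section \ref{msdefnsection}, in which components of $R$ and $R'$ may be made to coincide. It is precisely to accommodate this that one must work with \emph{almost} transverse surfaces, and the care lies in checking that $\overline{V_{R'}}$ together with the blocks of $R$--lifts genuinely satisfy the hypotheses of Proposition \ref{disjointinteriorsprop} in this possibly degenerate form, rather than with honestly transverse, embedded product regions. The remaining steps --- compactness of $\overline{V_{R'}}$, the $\tau$--invariance of the span, and the dichotomy $c+w_{\min}<b$ or $c>a$ --- are routine once the lifted picture is set up.
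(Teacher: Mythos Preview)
The paper does not give its own proof of this proposition; it is quoted from \cite{przytycki-2010} as a black box, so there is nothing to compare your argument against here. Your derivation from Proposition~\ref{disjointinteriorsprop} is correct. The one step worth making explicit is why $\Int(W)\cap M_b=\emptyset$: since $V_S$ is open in $\tilde M$ and each lift $\tilde R_n$ is a two-sided properly embedded surface, any point of $V_S\cap\tilde R_b$ would have a neighbourhood in $V_S$ meeting $V_b$, so $V_S\cap V_b=\emptyset$ already forces $V_S\cap\overline{V_b}=\emptyset$. No additional transversality assumption on $S$ is needed, and the contiguity of the layers met by $V_S$ is not actually used --- only that $[c,c+w_{\min}]$ cannot contain $[a,b]$.

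One caveat about context: in \cite{przytycki-2010} the result you are proving is Proposition~3.2 while the tool you invoke is Corollary~3.4, so in that source the logical dependence likely runs the other way and the original proof is presumably a direct Waldhausen-style product-region argument. Within the present paper, where both statements are imported without proof, your deduction is entirely legitimate and arguably the cleanest way to see it.
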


The following criterion allows us to test for adjacency under Definition \ref{fullmsdefn}.

\begin{lemma}\label{arcintersectionlemma}
Let $R_1, R_2$ be fixed, almost disjoint, taut Seifert surfaces for a link $L$. Then $R_1,R_2$ demonstrate that their isotopy classes are at most distance 1 apart in $\ms(L)$ if and only if the following holds for all pairs $(x,y)$ of points of $R_1\setminus R_2$. 

Choose a product neighbourhood $\nhd(R_1)=R_1\times[-1,1]$ in $M$ for $R_1$ with $R_1=R_1\times\{0\}$, such that $R_1\times\{1\}$ lies on the positive side of $R_1$. Let $\rho\colon \intvl\to M$ be any path with $\rho(0)=(x,1)$ and $\rho(1)=(y,-1)$ that is disjoint from $R_1$ and transverse to $R_2$. 
Then the algebraic intersection number of $\rho$ and $R_2$ is 1.
\end{lemma}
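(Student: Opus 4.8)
The plan is to work in the infinite cyclic cover $\tilde M$ of $M=\Sphere\setminus\nhd(L)$ and to translate both the adjacency condition of Definition \ref{fullmsdefn} and the arc condition into a single statement about where the lifts of $R_1$ sit relative to the lifts of $M\setminus R_2$. Fix the lifts $V_n=\tau^n(V_0)$ of $M\setminus R_1$, write $\tilde R_1^{(n)}$ for the copies of $R_1$ separating $V_{n-1}$ from $V_n$, so that $\tau(\tilde R_1^{(n)})=\tilde R_1^{(n+1)}$ and crossing $\tilde R_1^{(n)}$ positively passes from $V_{n-1}$ into $V_n$. Let $W_m=\tau^m(W_0)$ be the lifts of $M\setminus R_2$, and let $h_2$ be the locally constant function on $\tilde M$ minus the lifts of $R_2$ taking the value $m$ on $W_m$, so $h_2(\tau(\cdot))=h_2(\cdot)+1$ and, for a path transverse to the lifts of $R_2$, the net change in $h_2$ equals the algebraic intersection number with $R_2$.

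First I would reduce the arc condition to a statement about $h_2$. Since $R_1$ and $R_2$ are Seifert surfaces for the same link, every loop $\gamma$ satisfies $i(\gamma,R_1)=i(\gamma,R_2)=\lk(\gamma)$; hence a loop disjoint from $R_1$ has zero algebraic intersection with $R_2$, and the intersection number of $\rho$ with $R_2$ depends only on the endpoints, not on the chosen path. Lifting $\rho$ to $\tilde\rho$, which stays in a single $V_0$ because $\rho$ avoids $R_1$, the start lies on the positive side of $\tilde R_1^{(0)}$ over $x$ and the end on the negative side of $\tilde R_1^{(1)}$ over $y$. Setting $\mu(x)$ to be the value of $h_2$ on the lift of $x$ in $\tilde R_1^{(0)}$ (well defined and locally constant, as $x\notin R_2$) and using $\tilde R_1^{(1)}=\tau(\tilde R_1^{(0)})$, the endpoint values of $h_2$ are $\mu(x)$ and $\mu(y)+1$, so the intersection number is $1+\mu(y)-\mu(x)$. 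Thus the arc condition holds for all pairs $(x,y)$ if and only if $\mu$ is constant on $R_1\setminus R_2$. As almost disjoint surfaces have no transverse intersections, $\mu$ is already constant on each component of $R_1\setminus R_2$, so the real content is that the several components of $R_1$ all lie at the same level of the $R_2$-decomposition.

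It then remains to identify ``$\mu$ constant'' with ``$R_1,R_2$ demonstrate distance at most $1$'', i.e. with a lift of $M\setminus R_2$ meeting at most two (necessarily consecutive, by connectedness) of the $V_n$. For the forward direction, if $W_0$ meets only $V_c,V_{c+1}$ then equivariance gives that $W_m$ meets only $V_{m+c},V_{m+c+1}$; an interior point $z$ of a non-coinciding level-$0$ lift of $R_1$ lies in $\Int(W_{\mu(z)})$ and on the common frontier of $V_{-1}$ and $V_0$, forcing $W_{\mu(z)}$ to meet both, whence $\{V_{-1},V_0\}=\{V_{\mu(z)+c},V_{\mu(z)+c+1}\}$ and $\mu\equiv -1-c$. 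Conversely, if $\mu\equiv m_0$ then every level-$0$ lift of $R_1$ lies in $\overline{W_{m_0}}$ and every level-$1$ lift in $\overline{W_{m_0+1}}$ (immediately for the non-coinciding components, the coinciding ones being dealt with below); since $\tilde R_1^{(1)}$ and $\tilde R_1^{(-1)}$ then lie off $\Int(W_{m_0})$ and separate $V_{-1}\cup V_0$ from the rest, the connected region $W_{m_0}$ cannot cross them and so meets only $V_{-1}$ and $V_0$, giving distance at most $1$.

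The main obstacle is the boundary-link case, in which a component of $R_1$ coincides with a component of $R_2$. Such a component is excluded from $R_1\setminus R_2$, so it carries no value of $\mu$, yet its lifts sit exactly on lifts of $R_2$ rather than in the interior of a single $W_m$, and this is precisely where the clean ``interior point'' argument breaks down. Controlling these pieces is where I expect the work to lie: I would use the product-region results (Propositions \ref{surfaceinproductprop} and \ref{disjointinteriorsprop}, together with the non-fibred hypothesis via Corollary \ref{isotopyrelbdycor}) to show that a coinciding component's level-$0$ lift must lie on the frontier between $W_{m_0-1}$ and $W_{m_0}$, so that it too fails to let $W_{m_0}$ escape into $V_1$ or $V_{-2}$. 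Finally, boundary points of $R_1$ are handled by continuity from the interior, and the independence of the arc intersection number from the choice of $\rho$ is exactly the homological fact above. A reader content to ignore boundary links may simply take $R_1,R_2$ disjoint, where every lift of $R_1$ lies in the interior of a single $W_m$ and both implications are immediate.
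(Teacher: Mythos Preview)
Your argument is correct and takes a cleaner, more structural route than the paper. The paper proves each implication by direct path manipulation: assuming the arc condition, it fixes a base point $x_0\in R_1\setminus R_2$ and, supposing $V_{R_1}$ meets a forbidden lift $\tau^{-1}(V_{R_2})$ or $\tau^{2}(V_{R_2})$, builds by hand a concatenated path violating the condition; for the converse it lifts an arbitrary $\rho$ and locates its endpoint. You instead package everything into the level function $\mu\colon R_1\setminus R_2\to\mathbb Z$ and factor the equivalence through the single statement ``$\mu$ is constant''. Your formula $i(\rho,R_2)=1+\mu(y)-\mu(x)$ is exactly the homological content the paper extracts piecemeal, and your equivariance argument for the two remaining implications is correct.

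One point is easier than you think. In the converse step you only need $\tilde R_1^{(\pm1)}\cap W_{m_0}=\emptyset$ to trap the connected open set $W_{m_0}$ inside $V_{-1}\cup\tilde R_1^{(0)}\cup V_0$. For a coinciding component $C$ of $R_1$, every lift $\tilde C^{(n)}$ is literally a lift of a component of $R_2$, so it lies in $\tilde M\setminus\bigcup_m W_m$ and in particular misses the open set $W_{m_0}$ automatically. You do not need to know \emph{which} frontier $\partial W_{k-1}\cap\partial W_k$ it sits on, so none of the product-region machinery (Propositions~\ref{surfaceinproductprop}, \ref{disjointinteriorsprop}, Corollary~\ref{isotopyrelbdycor}) is required here; the paper's own proof of this lemma does not invoke it either. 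With that simplification your sketch is already a complete proof.
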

\begin{proof}
Suppose the condition holds for all pairs $(x,y)$. If $R_1,R_2$ coincide everywhere then they are distance 0 apart. Assume otherwise, and let $x_0$ be a point of $R_1\setminus R_2$. Choose a lift $V_{R_2}$ of $M\setminus R_2$, and let $\tilde{x}_0$ be the lift of $x_0$ that lies in $V_{R_2}$. Let $V_{R_1}$ be the lift of $M\setminus R_1$ that lies above $\tilde{x}_0$, and let $\tilde{R}_1$ be the lift of $R_1$ that lies between $V_{R_1}$ and $\tau^{-1}(V_{R_1})$. Then $\tilde{x}_0$ lies on $\tilde{R}_1$, and $V_{R_1}$ meets $V_{R_2}$ and $\tau(V_{R_2})$. We wish to show that these are the only lifts of $M\setminus R_2$ that $V_{R_1}$ meets. Suppose otherwise. 

First suppose that $V_{R_1}$ meets $\tau^{-1}(V_{R_2})$. Then there is a path $\rho$ in $M$ that runs from just above $x_0$ to the projection of a point in $V_{R_1}\cap\tau^{-1}(V_{R_2})$, that is disjoint from $R_1$ and that has algebraic intersection $-1$ with $R_2$. There is also a path $\rho'$ from this point back to above $x_0$ that is disjoint from $R_2$. We may assume both paths are transverse to $R_2$. This forms a closed curve that has algebraic intersection $-1$ with $R_2$. It therefore has algebraic intersection $-1$ with $R_1$. 

Consider the first point $x_1$ at which $\rho'$ crosses $R_1$. Then $x_1\in R_1\setminus R_2$. If $\rho'$ passes through $R_1$ in the positive direction at $x_1$ then the section of the path $\rho\cup\rho'$ from $(x_0,1)$ to $(x_1,-1)$ contradicts the hypothesis as it has algebraic intersection $-1$ with $R_2$ instead of $1$.
Thus $\rho'$ passes through $R_1$ in the negative direction at $x_1$. Stop the path just above $x_1$, at $(x_1,1)$, and add in a path that runs to $(x_1,-1)$ in $M\setminus R_1$. This final section of path has algebraic intersection $1$ with $R_2$ by the hypothesis. Then the complete path gives a contradiction with the hypothesis, as it has algebraic intersection 0 with $R_2$.
Hence $V_{R_1}$ lies entirely above $\tau^{-1}(V_{R_2})$.

Now suppose $V_{R_1}$ meets $\tau^2(V_{R_2})$. Then there is a path $\rho$, disjoint from $R_1$, from $(x_0,1)$ to the projection of a point in $V_{R_1}\cap\tau^2(V_{R_2})$ that has algebraic intersection $2$ with $R_2$. Again, there is a second path $\rho'$ from there to $(x_0,1)$ that is disjoint from $R_2$. The closed curve has algebraic intersection 2 with $R_2$ and $R_1$. Consider the first point $x_1$ at which $\rho'$ crosses $R_1$. Then $x_1\in R_1\setminus R_2$. If $\rho'$ passes through $R_1$ in the positive direction at $x_1$ then the path up to this point contradicts the hypothesis, as it has intersection 2 with $R_2$ instead of 1. Thus it passes through $R_1$ in the negative direction. Stop the path just above $x_1$, at $(x_1,1)$, and add in a path that runs to $(x_1,-1)$ in $M\setminus R_1$. This final section of path has algebraic intersection $1$ with $R_2$ by the hypothesis. Thus the complete path gives a contradiction with the hypothesis, as it has intersection 3 with $R_2$.
Hence we have the required result.

Conversely, suppose any lift of $M\setminus R_1$ meets at most two lifts of $M\setminus R_2$. Choose $x,y,\rho$ as in the condition to be checked. Take the lift $\tilde{\rho}$ of $\rho$ with $\tilde{\rho}(0)$ in a fixed lift $V_{R_2}$ of $M\setminus R_2$. We may use $\tilde{\rho}$ in defining the lift $V_{R_1}$ of $M\setminus R_1$. Therefore $\tilde{\rho}$ is contained in two lifts of $M\setminus R_2$ and the lift of $R_2$ between them. One of these two lifts is $V_{R_2}$, since this contains the lift $\tilde{\rho}(0)$ of $(x,1)$. In addition, the lift of $(x,-1)$ lies in $\tau(V_{R_2})$. Thus the lift  $\tilde{\rho}(1)$ of $(y,-1)$ is in $V_{R_2}$ or in $\tau(V_{R_2})$. We must show that it is in $\tau(V_{R_2})$. If not then it lies in $V_{R_2}$. Then the lift of $(y,1)$ lies in $\tau^{-1}(V_{R_2})$, which is a contradiction.
\end{proof}

\begin{lemma}\label{almostdisjointlemma}
Let $R_a$ be a taut Seifert surface for $L$.
Suppose $R_{b,0}^{}, R_{b,0}'$ are two copies of a component of a taut Seifert surface for $L$ that are disjoint from $R_a$ and are not isotopic to any component of it.
Then $R_{b,0}^{},R_{b,0}'$ are isotopic by an isotopy that does not meet $R_a$.
\end{lemma}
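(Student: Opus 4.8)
The plan is to isotope $R_{b,0}$ to $R_{b,0}'$ through a product region lying between them, and to show that this region, and hence the isotopy, can be kept disjoint from $R_a$. Being copies of one component, $R_{b,0}$ and $R_{b,0}'$ are isotopic in $M=\Sphere\setminus\nhd(L)$ and have the same boundary; after a small perturbation to general position (which we may take disjoint from $R_a$) Proposition \ref{productregionsprop}(1) gives a product region $N=(T\times\intvl)/\!\sim$ between them, with $T\times\{0\}\subseteq R_{b,0}$ and $T\times\{1\}\subseteq R_{b,0}'$. Since $\partial R_{b,0}=\partial R_{b,0}'$, this is the boundary-coinciding situation of the earlier remark, so $N$ may fail to be embedded along $\partial M$; this does not matter, as we only need an isotopy of $R_{b,0}$, not one fixing its boundary.

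The crux is to show that $R_a$ can be kept out of $\Int(N)$. Each component of $R_a$ is incompressible, and by hypothesis is disjoint from both $R_{b,0}$ and $R_{b,0}'$, hence from $T\times\{0\}$ and $T\times\{1\}$. Proposition \ref{surfaceinproductprop}, applied with $S_0=R_{b,0}$, $S_1=R_{b,0}'$ and $S'$ a component of $R_a$, therefore shows that every component of $R_a\cap\Int(N)$ is parallel to a subsurface of $R_{b,0}$. If $R_{b,0}$ and $R_{b,0}'$ are disjoint then $N$ has horizontal boundary exactly $R_{b,0}\cup R_{b,0}'$, and such a component $C$ is parallel to $R_{b,0}$ itself. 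As $\partial C=\partial R_{b,0}$ is a full set of boundary components of $M$, the piece $C$ is an entire component of $R_a$ isotopic to $R_{b,0}$, contradicting the hypothesis that no component of $R_a$ is isotopic to $R_{b,0}$. Hence $R_a\cap\Int(N)=\emptyset$, and the product structure of $N$ gives an isotopy from $R_{b,0}$ to $R_{b,0}'$ supported in $N$ and so disjoint from $R_a$.

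It remains to reduce to the case where $R_{b,0}$ and $R_{b,0}'$ are disjoint. Here I would lower $|R_{b,0}\cap R_{b,0}'|$ one step at a time: a product region $N$ from Proposition \ref{productregionsprop}(1) lets us isotope $R_{b,0}'$ so as to strictly reduce the number of intersection curves, and as $R_a$ is disjoint from both surfaces we aim to carry out each such move in the complement of $R_a$. The step I expect to be the main obstacle is exactly this bookkeeping of $R_a$ during the reduction: for an intermediate product region Proposition \ref{surfaceinproductprop} only tells us that a component of $R_a\cap\Int(N)$ is parallel to a \emph{subsurface} of $R_{b,0}$, not to a whole component, so the clean contradiction used above is unavailable and one must instead argue --- for instance by taking $N$ innermost among the regions cut off by $R_a$, or by pushing the isotopy across the parallelism --- that the region across which we isotope can always be chosen to miss $R_a$.
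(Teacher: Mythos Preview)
Your approach is the paper's approach, but you have talked yourself into a difficulty that is not there. The ``bookkeeping of $R_a$'' in the intersecting case is no harder than in the disjoint case, and the paper treats both at once.

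The missing observation is this. Suppose $N$ is \emph{any} product region between $R_{b,0}$ and $R_{b,0}'$ (whether or not they intersect). Since $R_a$ is properly embedded and disjoint from $R_{b,0}\cup R_{b,0}'$, any component of $R_a$ that meets $\Int(N)$ cannot cross the horizontal boundary of $N$; the rest of $\partial N$ lies in $\partial M$, so that component $R_{a,0}$ is \emph{entirely} contained in $N$. Now apply Proposition~\ref{surfaceinproductprop}: $R_{a,0}$ is parallel to a subsurface $T_0$ of $R_{b,0}$. But $\partial R_{a,0}\subseteq\partial M$, so the parallelism forces $\partial T_0\subseteq\partial M\cap R_{b,0}=\partial R_{b,0}$; since $R_{b,0}$ is connected, $T_0=R_{b,0}$. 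Thus $R_{a,0}$ is isotopic to $R_{b,0}$, contradicting the hypothesis. Hence $N$ misses $R_a$ in every case, and the isotopy across $N$ either reduces $|R_{b,0}\cap R_{b,0}'|$ or makes the surfaces coincide. Iterate.

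Two smaller points. First, your sentence ``$\partial C=\partial R_{b,0}$ is a full set of boundary components of $M$'' is not true in general ($R_{b,0}$ is only one component of a Seifert surface), and in any case the logic should run the other way: one first sees that $C$ is a whole component of $R_a$, and then deduces that the subsurface of $R_{b,0}$ it is parallel to must be all of $R_{b,0}$. Second, once you see the argument above, there is no need for an innermost choice of $N$ or any extra manoeuvre; the paper's proof is exactly the short inductive step you wrote down for the disjoint case.
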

\begin{proof}
By a small isotopy disjoint from $R_a$, we may ensure that $R_{b,0}^{},R_{b,0}'$ are transverse.
Since they are isotopic, there is a product region $N$ between them. 
If $N$ meets $R_a$, it contains a whole component of $R_a$, which is then isotopic to each of the horizontal boundary components of $N$. This contradicts that $R_{b,0}^{}$ and $R_{b,0}'$ are not isotopic to any component of $R_a$. Thus $N$ is disjoint from $R_a$.
If $R_{b,0}^{}\cap R_{b,0}'\neq \emptyset$ then the isotopy defined by $N$ reduces $|R_{b,0}^{}\cap R_{b,0}'|$. If $R_{b,0}^{}\cap R_{b,0}'=\emptyset$ then the isotopy makes $R_{b,0}^{}$ and $R_{b,0}'$ coincide.
\end{proof}

\begin{lemma}\label{makedisjointlemma}
Let $R_a,R_b$ be adjacent vertices of $\ms(L)$. Then $R_a,R_b$ can be isotoped so they are disjoint and realise their adjacency. 

Suppose there are components $R_{a,0}$ of $R_a$ and $R_{b,0}$ of $R_b$ that can be made to coincide, so there is a product region between these components. 
The side of $R_a$ on which this product region lies is determined by the choice of $R_a,R_b$.
\end{lemma}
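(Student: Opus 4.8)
The plan is to split the lemma into its two assertions. The first, that adjacent surfaces admit a disjoint realisation witnessing their adjacency, I expect to follow fairly directly from the machinery assembled in this section. The second, the determination of the side, is the real content, and the difficulty is entirely concentrated on components of $R_b$ that coincide with components of $R_a$ — exactly the boundary-link phenomenon flagged in Section \ref{msdefnsection}, where Lemma \ref{almostdisjointlemma} is unavailable.

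For the first assertion I would start from representatives $R_a,R_b$ witnessing the adjacency under Definition \ref{fullmsdefn}, and apply Lemma \ref{maketransverselemma} to make them almost transverse with simplified intersection; by Proposition \ref{realisedistanceprop} this configuration realises the distance in $\ms(L)$, namely $1$. The remaining task is to show that no transverse intersection survives. Passing to the infinite cyclic cover, the realising lift of $M\setminus R_b$ meets exactly two consecutive lifts $V_0,V_1$ of $M\setminus R_a$, so $R_b$ can be isotoped to lie between two consecutive copies of $R_a$ and hence off $R_a$. Since the surfaces can then be made disjoint yet have simplified intersection, the contrapositive of Proposition \ref{productregionsprop}(2) forbids any surviving transverse double curve: such a curve would force a product region. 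Thus after this step the components of $R_a$ and $R_b$ either coincide or are disjoint; pushing each coinciding component slightly off $R_a$ by a small isotopy (which preserves the realised distance) yields a genuinely disjoint realisation of the adjacency. The direction of that final push-off is precisely the side addressed in the second assertion.

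For the second assertion I would fix two disjoint realisations of the same pair of vertices, each with a component $R_{b,0}$ parallel to a component $R_{a,0}$, and assume for contradiction that the two product regions lie on opposite sides of $R_{a,0}$. The strategy is to translate ``side of $R_a$'' into the $\tau$-direction in $\tilde M$: crossing $R_a$ in the positive direction is applying $\tau$, so the product region lying on the positive side of $R_{a,0}$ means that the parallel lift of $R_{b,0}$ sits on the $\tau$-positive side of a chosen lift $\hat R_{a,0}$. For the components of $R_b$ not isotopic to any component of $R_a$, Lemma \ref{almostdisjointlemma} shows the two realisations agree up to an isotopy missing $R_a$, so these contribute nothing and the whole question reduces to the coinciding component $R_{b,0}$, where that lemma does not apply.

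The main obstacle is therefore this coinciding case, and I expect to resolve it by a boundary analysis on the torus $\partial\nhd(L)$ combined with the lift picture. Both $R_{a,0}$ and $R_{b,0}$ meet this torus in the Seifert longitude, and the product region meets it in one of the two annuli that these longitudes cobound; the side on which the region lies corresponds to whether that annulus runs in the positive or negative meridian (equivalently $\tau$) direction, and the positive meridian is fixed by the orientations of $L$ and $\Sphere$. Lifting to $\tilde M$, opposite sides in the two realisations would place a lift of $R_{b,0}$ just on the $\tau$-positive side of $\hat R_{a,0}$ in one realisation and just on the $\tau$-negative side of $\tau\hat R_{a,0}$ in the other; since $R_{b,0}$ is the same isotopy class in both, and using the rigidity of the distance-$1$ lift configuration provided by Proposition \ref{disjointinteriorsprop}, these two parallel lifts would be isotopic, producing a product region between $\hat R_{a,0}$ and $\tau(\hat R_{a,0})$. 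Exactly as in Corollary \ref{isotopyrelbdycor}, this exhibits a fibration and contradicts that $L$ is not fibred, giving the required uniqueness of the side.
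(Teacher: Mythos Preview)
Your treatment of the first assertion is fine and essentially matches the paper: after making the surfaces almost transverse with simplified intersection, realising distance $1$ forces them to be almost disjoint, and one then pushes any coinciding components apart.

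For the second assertion your approach diverges from the paper's and, as written, has a real gap. The paper does not argue by contradiction via two realisations, nor does it invoke non-fibredness. Instead it uses the arc criterion of Lemma~\ref{arcintersectionlemma} directly. The key observation you are missing is that since $R_a\neq R_b$ as vertices, at least one component $R_{a,1}$ of $R_a$ does \emph{not} coincide with any component of $R_b$; a point $x_0\in R_{a,1}$ then serves as a fixed reference. For any $y$ on the coinciding component $R_{a,0}$, the algebraic intersection with $R_b$ of a path from $(x_0,1)$ to $(y,-1)$ in $M\setminus R_a$ is a well-defined number (one checks it is $0$ or $1$, independent of $y$ and of the path), and this number dictates the push-off direction: push $R_{a,0}$ down if the value is $1$, up if it is $0$. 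A short path-splicing argument then verifies that the arc criterion holds for all pairs $(x,y)$, so the adjacency is still realised. The side is thus determined by an intrinsic homological quantity, with no appeal to fibredness.

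The specific gap in your argument is the sentence ``these two parallel lifts would be isotopic, producing a product region between $\hat R_{a,0}$ and $\tau(\hat R_{a,0})$''. You have not explained why the two lifts of $R_{b,0}$ coming from the two realisations are isotopic \emph{in $\tilde M$}, nor why such an isotopy would yield a product filling the entire slab between $\hat R_{a,0}$ and $\tau(\hat R_{a,0})$. The two product regions you start with are only thin collars of $R_{a,0}$, one on each side; nothing forces them to extend across the whole fundamental domain. Moreover, Proposition~\ref{disjointinteriorsprop} controls disjointness of submanifolds, not isotopy of individual surface components, so it does not supply the ``rigidity'' you invoke. Finally, even if you obtained a product between $\hat R_{a,0}$ and $\tau(\hat R_{a,0})$, this concerns only one component of the Seifert surface and does not immediately exhibit $L$ as fibred. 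The paper's argument sidesteps all of this by never comparing two realisations; it simply computes which side is forced by the arc criterion.
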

\begin{proof}
We regard $R_b$ as fixed, and isotope $R_a$.
Isotope $R_a$ to realise the adjacency. Suppose they are not disjoint, and pick a component $R_{a,0}$ of $R_a$ that is not disjoint from $R_b$. Because the surfaces realise their adjacency, $R_{a,0}$ cannot cross $R_b$. Therefore $R_{a,0}$ can be pushed off $R_b$ by a small isotopy. If the two components do not coincide, there is no choice as to which direction to push $R_{a,0}$, and it is clear that the condition in Lemma \ref{arcintersectionlemma} continues to hold.

If $R_{a,0}$ coincides with a component of $R_b$, it is possible to push it off in either direction, creating a product region. We will see that the choice of direction is forced upon us by wanting the condition in Lemma \ref{arcintersectionlemma} to continue to hold.

As $R_a\neq R_b$, at least one component $R_{a,1}$ of $R_a$ does not coincide with any component of $R_b$. Fix a point $x_0$ of $R_{a,1}$, and
choose a product neighbourhood $\nhd(R_a)=R_a\times[-1,1]$ 
such that $R_a=R_a\times\{0\}$. For each point $y$ of $R_{a,0}$, choose a path $\rho$ from $(x_0,1)$ to $(y,-1)$ that is disjoint from $R_a$ and transverse to $R_b$. 
Since $\rho$ is contained in $M\setminus R_a$, it has algebraic intersection 0 or 1 with $R_b$. 
Suppose that this number is not both well-defined and constant on $R_{a,0}$. That is, suppose there are such points $y_0,y_1$ and paths $\rho_0,\rho_1$ such that one gives the value 0 while the other gives the value 1. 
Let $\rho$ be a path from $y_0$ to $y_1$ in $R_{a,0}\times\{-1\}$.
Then $\rho\cup\rho_0\cup\rho_1$ forms a closed curve that has intersection 0 with $R_a$ but intersection 1 with $R_b$. This is not possible. 
We therefore have a well-defined value for the algebraic intersection of $R_b$ and a path as described. 
If this value is 1, push $R_{a,0}$ downwards, and otherwise push it upwards. 
Then, for $y\in R_{a,0}$, any path from $(x,1)$ to $(y,-1)$ that is disjoint from $R_a$ has intersection 1 with $R_b$.

Now pick points $x,y$ and path $\rho$ as in the condition in Lemma \ref{arcintersectionlemma}. Isotope $\rho$ so that it decomposes into three paths disjoint from $R_a$, one from $(x,1)$ to $(x_0,-1)$, one from $(x_0,-1)$ to $(x_0,1)$ and one from $(x_0,1)$ to $(y,-1)$. The outer two paths have intersection $1$ with $R_b$ while the middle one has intersection $-1$ with $R_b$. Thus $\rho$ has intersection $1$ with $R_b$.
\end{proof}

\section{An ordering on the vertices of $\ms(L_i)$}\label{orderingsection}

In \cite{przytycki-2010}, a partial ordering $<_{S}$ is defined on the vertices of $\ms(L)$ for a link $L$, relative to a fixed vertex $S$. This ordering only compares adjacent vertices.

\begin{definition}[\cite{przytycki-2010} Section 5]\label{orderingdefn}
Let $R,R',S$ be vertices of $\ms(L)$ with $R,R'$ adjacent. Isotope the surfaces so that $R,R'$ are almost transverse to and have simplified intersection with $S$, and so that $R,R'$ are almost disjoint with simplified intersection.
Set $M=\Sphere\setminus\nhd(L)$. Let $\tilde{M}$ denote the infinite cyclic cover of $M$, and let $\tau$ be the generating covering transformation (in the positive direction). 
Let $V_{S}$ be a lift of $M\setminus S$.

Let $V_{R}$ be the lift of $M\setminus R$ such that $V_{R}\cap V_{S}\neq\emptyset$ but $V_{R}\cap \tau(V_{S})=\emptyset$.
Finally, let $V_{R'}$ be the lift of $M\setminus R'$ such that $V_{R'}\cap V_{R}\neq\emptyset$ but $V_{R'}\cap\tau(V_{R})=\emptyset$. 
See Figure \ref{orderingpic1}.

Then $R' <_{S} R$ if $V_{R'}\cap V_S\neq\emptyset$.
\begin{figure}[htbp]
\centering
\psset{xunit=.5pt,yunit=.5pt,runit=.5pt}
\begin{pspicture}(270,330)
{
\pscustom[linewidth=1,linecolor=black]
{
\newpath
\moveto(50,320)
\lineto(50,10.000003)
\moveto(220,320)
\lineto(220,10.000003)
\moveto(50,300)
\lineto(220,300)
\moveto(50,210.000003)
\lineto(220,210.000003)
\moveto(50,120.000003)
\lineto(220,120.000003)
\moveto(220,130.000003)
\curveto(170,130.000003)(180,240.000003)(160,210.000003)
\curveto(136.46606,174.699103)(127.33567,244.579753)(120,229.999993)
\curveto(101.46842,193.168213)(100,140.000003)(50,140.000003)
\moveto(50,170.000003)
\curveto(90,190.000003)(101.15074,311.397455)(150,270.000003)
\curveto(197.74588,229.537613)(159.32956,163.708633)(220,170.000003)
\moveto(230,210.000003)
\lineto(230,299.999992)
\moveto(50,80.000003)
\curveto(90,100.000003)(101.15074,221.397453)(150,180.000003)
\curveto(197.74588,139.537613)(159.32956,73.708633)(220,80.000003)
\moveto(40,300)
\lineto(40,210.000003)
\moveto(40,170.000003)
\lineto(40,80.000003)
\moveto(220,40.000013)
\curveto(170,40.000013)(180,150.000003)(160,120.000003)
\curveto(136.46606,84.699103)(127.33567,154.579753)(120,139.999993)
\curveto(101.46842,103.168213)(100,50.000003)(50,50.000003)
\moveto(50,30.000003)
\lineto(220,30.000003)
\moveto(230,130.000003)
\lineto(230,40.000003)
}
}
{
\pscustom[linewidth=1,linecolor=black,fillstyle=solid,fillcolor=black]
{
\newpath
\moveto(230,289.999992)
\lineto(234,285.999992)
\lineto(230,299.999992)
\lineto(226,285.999992)
\lineto(230,289.999992)
\closepath
}
}
{
\put(235,250){$\tau$}
\put(10,250){$V_S$}
\put(10,130){$V_R$}
\put(235,90){$V_{R'}$}
}
\end{pspicture}
\caption{\label{orderingpic1}}
\end{figure}
\end{definition}



\begin{lemma}[\cite{przytycki-2010} Lemma 5.3]\label{orderinglemma1}
Let $R, R'$ be adjacent vertices, and $S$ any vertex. Then $R' <_{S}R$ or $R <_{S}R'$.
\end{lemma}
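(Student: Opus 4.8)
The plan is to fix once and for all a labelling of the complementary regions of the lifts of $S$, $R$ and $R'$ in $\tilde M$, translate both of the relations $R'<_S R$ and $R<_S R'$ into statements about integer labels, and then observe that at least one of the two numerical conditions must hold. Throughout I work in the position supplied by Definition \ref{orderingdefn}: the three surfaces are pairwise almost transverse with simplified intersection, so by Proposition \ref{realisedistanceprop} all the relevant distances are realised by the lifts.

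First I would set up the bookkeeping, assuming for the moment that $R$, $R'$ and $S$ are connected (the boundary-link case is the genuine difficulty; see below). Write $U^S_n=\tau^n(V_S)$ for the complementary regions of the lifts of $S$, and likewise $U^R_n=\tau^n(V_R)$ and $U^{R'}_n=\tau^n(V_{R'})$, where $V_R,V_{R'}$ are the lifts singled out in Definition \ref{orderingdefn}. For a region $V$, call $n$ an \emph{$S$--level} of $V$ if $V\cap U^S_n\neq\emptyset$. Because each region is connected and crossing a single lift of $S$ changes the $S$--level by exactly one, the $S$--levels of a region form an interval of integers. Two facts then fall out of Definition \ref{orderingdefn}. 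Writing $k=\dist_{\ms(L)}(R,S)$, the region $V_R=U^R_0$ meets $V_S$ but not $\tau(V_S)$, so by Proposition \ref{realisedistanceprop} its $S$--levels are exactly $[-k,0]$, and hence $U^R_n$ has $S$--levels $[n-k,n]$. And since $\dist_{\ms(L)}(R,R')=1$ is realised, $V_{R'}=U^{R'}_0$ meets exactly two consecutive lifts of $M\setminus R$, which by its defining property ($V_{R'}\cap V_R\neq\emptyset$, $V_{R'}\cap\tau(V_R)=\emptyset$) must be $U^R_{-1}$ and $U^R_0$; applying $\tau^n$, the region $U^{R'}_n$ meets exactly $U^R_{n-1}$ and $U^R_n$.

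The key quantity is $t=\max\{n: V_{R'}\cap U^S_n\neq\emptyset\}$, the top $S$--level of $V_{R'}$. Since $V_{R'}$ meets only the $R$--regions $U^R_{-1}$ and $U^R_0$, and every point of $V_{R'}\cap U^S_n$ off the lifts of $R$ lies in one of these (whose $S$--levels lie in $[-k-1,-1]$ and $[-k,0]$ respectively), every $S$--level of $V_{R'}$ lies in $[-k-1,0]$; in particular $-k-1\le t\le 0$. Now I would simply read off the two relations. If $t=0$ then $V_{R'}\cap V_S\neq\emptyset$, which is exactly $R'<_S R$. If instead $t\le -1$, I compute the data for the reversed relation: the lift of $M\setminus R'$ meeting $V_S$ but not $\tau(V_S)$ is $U^{R'}_{-t}$, and, using the interleaving above, the lift of $M\setminus R$ meeting $U^{R'}_{-t}$ but not $\tau(U^{R'}_{-t})=U^{R'}_{-t+1}$ is $U^R_{-t-1}$. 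Its $S$--levels are $[-t-1-k,\,-t-1]$, and since $-1\ge t\ge -k-1$ we have $-t-1\ge 0$ and $-t-1-k\le 0$, so $0$ is an $S$--level of $U^R_{-t-1}$; that is exactly $R<_S R'$. As $t\le 0$ forces either $t=0$ or $t\le -1$, one of the two relations always holds (indeed exactly one).

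The only real obstacle is the hypothesis flagged throughout the paper, that for boundary links the surfaces may be disconnected and individual components may be made to coincide. Then ``a lift of $M\setminus S$'' is no longer a single $\tau$--orbit of regions, and the clean linear stacking used above can fail, so the interleaving statement that $U^{R'}_n$ meets exactly $U^R_{n-1},U^R_n$ must be replaced by an argument that tracks each component and uses simplified intersection (Lemma \ref{maketransverselemma} and Proposition \ref{disjointinteriorsprop}) to control coinciding components, in the spirit of Lemma \ref{makedisjointlemma}. The numerical skeleton is unchanged, however: bound the top $S$--level $t$ of $V_{R'}$ above by $0$ from the adjacency with $R$, and split into the cases $t=0$ and $t\le -1$.
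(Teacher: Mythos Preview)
The paper does not give its own proof of this lemma; it is quoted directly from \cite{przytycki-2010} (their Lemma 5.3) without argument. So there is nothing in the paper to compare against.

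Your argument is essentially the Przytycki--Schultens proof: fix the lifts, use realised adjacency to see that $V_{R'}$ meets exactly $U^R_{-1}$ and $U^R_0$, bound the top $S$--level $t$ of $V_{R'}$ by $0$, and split on $t=0$ versus $t\le -1$. The computation in the second case, identifying $U^{R'}_{-t}$ and then $U^R_{-t-1}$ as the lifts appearing in the reversed definition and checking $0\in[-t-1-k,\,-t-1]$, is correct.

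Your caveat about disconnected surfaces is slightly overstated for this particular lemma. In the Przytycki--Schultens framework a ``lift of $M\setminus S$'' is chosen as a fundamental domain for the $\tau$--action (a union of one lift of each component), so the $U^S_n$ still tile $\tilde M$ with $\tau(U^S_n)=U^S_{n+1}$, and the statement that the $S$--levels of any connected set form an interval remains valid. The genuine boundary--link complications in this paper concern the distance formula (Proposition \ref{msdistanceprop}) and the handling of coinciding components in Sections \ref{mfldssection} and \ref{edgessection}, not this ordering lemma; once the lifts are set up as fundamental domains your numerical argument goes through without further case analysis.
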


\begin{lemma}[\cite{przytycki-2010} Lemma 5.4]\label{orderdistancelemma}
If $\dist_{\ms(L)}(R',S)<\dist_{\ms(L)}(R,S)$ then ${R<_S R'}$.
\end{lemma}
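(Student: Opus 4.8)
The plan is to prove the statement in its contrapositive packaging. By Lemma~\ref{orderinglemma1} at least one of $R<_S R'$ and $R'<_S R$ holds; since $R$ and $R'$ are distinct vertices they cannot both hold, so it suffices to rule out $R'<_S R$. In the notation of Definition~\ref{orderingdefn} this means showing $V_{R'}\cap V_S=\emptyset$. First I would put $R,R',S$ simultaneously in good position using Lemma~\ref{maketransverselemma}, so that each pair is almost transverse with simplified intersection; then by Proposition~\ref{realisedistanceprop} all the distances below are realised by the chosen lifts.

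Next set $d=\dist_{\ms(L)}(R,S)$ and $d'=\dist_{\ms(L)}(R',S)$, with $d'<d$. With $V_S$ fixed and $V_R$ chosen to meet $V_S$ but not $\tau(V_S)$, Proposition~\ref{realisedistanceprop} gives that $V_R$ meets exactly the lifts $\tau^{-d}(V_S),\dots,V_S$, a block of $d+1$ consecutive lifts whose top is $V_S$. I would then introduce the analogous lift $U$ of $M\setminus R'$ meeting $V_S$ but not $\tau(V_S)$, so that $U$ meets exactly $\tau^{-d'}(V_S),\dots,V_S$. The ordering lift $V_{R'}$ is some translate $\tau^{t}(U)$, and I would pin down $t$ as follows: applying Proposition~\ref{realisedistanceprop} to the pair $R',R$ (which are distance $1$ apart), the set $\{\,j:\tau^{j}(U)\cap V_R\neq\emptyset\,\}$ is a pair of consecutive integers $\{t_0,t_0+1\}$, and the requirement that $V_{R'}$ meet $V_R$ but not $\tau(V_R)$ forces $V_{R'}=\tau^{t_0}(U)$, i.e. $t=t_0=\min\{\,j:\tau^{j}(U)\cap V_R\neq\emptyset\,\}$. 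Since $\tau^{t}(U)$ meets exactly $\tau^{t-d'}(V_S),\dots,\tau^{t}(V_S)$, we get $V_{R'}\cap V_S\neq\emptyset$ precisely when $0\le t\le d'$. Thus the whole lemma reduces to the single inequality $t\le -1$.

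The heart of the matter is therefore the claim that $\tau^{-1}(U)\cap V_R\neq\emptyset$, which immediately gives $t=\min\le -1$. The guiding intuition is exactly the distance hypothesis: both $U$ and $V_R$ have $V_S$ as their top lift, but $V_R$ reaches down to $\tau^{-d}(V_S)$ while $U$ stops at $\tau^{-d'}(V_S)$ with $-d\le -d'-1$, so $V_R$ penetrates strictly below the floor of $U$; translating $U$ down by $\tau^{-1}$ should then still overlap $V_R$, since the block $\{-d'-1,\dots,-1\}$ of lifts met by $\tau^{-1}(U)$ is contained in the block $\{-d,\dots,0\}$ met by $V_R$. I expect this to be the main obstacle, because overlapping blocks of met lifts do not on their own force the two regions to intersect inside $\tilde{M}$. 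To upgrade block-overlap to genuine intersection I would argue by contradiction: if $\tau^{-1}(U)$ and $V_R$ were disjoint, then, being $\partial$--almost disjoint lifts with simplified intersection, Proposition~\ref{disjointinteriorsprop} and Proposition~\ref{surfaceinproductprop} restrict how one can sit inside a complementary region of the other, and I would use this to isotope $V_{R'}=\tau^{t_0}(U)$ so as to meet one fewer lift of $M\setminus S$, contradicting that $d'=\dist_{\ms(L)}(R',S)$ is already realised. The subtlety to be watched throughout, and the reason the product-region machinery of Section~\ref{mfldssection} is needed rather than a bare counting argument, is the boundary-link case in which components of the surfaces can be made to coincide, so that the blocks of met lifts and the product regions between them may fail to be embedded.
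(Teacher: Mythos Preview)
The paper does not give its own proof of this lemma; it is quoted from \cite{przytycki-2010}, so there is nothing in the paper to compare against. Your reduction is correct and clean: after putting $R,R',S$ in good position, choosing $V_R$ and $U$ as you do, and writing $V_{R'}=\tau^{t_0}(U)$ with $\{t_0,t_0+1\}=\{j:\tau^j(U)\cap V_R\neq\emptyset\}$, the lemma is indeed equivalent to $t_0\le -1$.

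The gap is in the final step. You aim for the stronger claim $\tau^{-1}(U)\cap V_R\neq\emptyset$, and your proposed route to it (Propositions~\ref{disjointinteriorsprop} and~\ref{surfaceinproductprop} plus an isotopy reducing the number of lifts of $V_S$ met by $V_{R'}$) is vague and, more importantly, unnecessary: the claim $\tau^{-1}(U)\cap V_R\neq\emptyset$ need not hold, since the two-element block $\{t_0,t_0+1\}$ may sit strictly below $-1$. What you actually need is only that \emph{some} $\tau^k(U)$ with $k\le -1$ meets $V_R$, and this drops out of the very ``block'' picture you set up, with no product-region machinery. Since $d>d'$ we have $-d\le -d'-1\le 0$, so $V_R\cap\tau^{-d'-1}(V_S)\neq\emptyset$; pick a point $p$ in this intersection. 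It lies in $\tau^k(U)$ for some $k$. As $\tau^k(U)$ meets exactly $\tau^{k-d'}(V_S),\dots,\tau^k(V_S)$ and now meets $\tau^{-d'-1}(V_S)$, we get $k\le -1$. But $p\in V_R$ as well, so $\tau^k(U)\cap V_R\neq\emptyset$ and hence $t_0\le k\le -1$. This replaces your ``heart of the matter'' paragraph entirely and avoids the boundary-link subtleties you were worried about.
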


\begin{lemma}[\cite{przytycki-2010} Lemma 5.5]\label{orderinglemma2}
There are no $R_1,\cdots,R_k$, for $k\geq 2$, with $R_1 <_{S} R_2 <_{S}\cdots<_{S}R_k <_{S}R_1$.
\end{lemma}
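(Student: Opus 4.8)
The plan is to argue in the infinite cyclic cover $\tilde{M}$ and to turn the cyclic chain of $<_S$ relations into a contradiction about an honestly \emph{linear} (rather than cyclic) arrangement of lifts. First I would put $S$ and all of $R_1,\dots,R_k$ into a single good position simultaneously, extending Lemma \ref{maketransverselemma} from three surfaces to finitely many, so that every pair is almost transverse with simplified intersection. By Proposition \ref{realisedistanceprop} this position computes all the relevant distances and, crucially, lets me read off each relation $R_i<_S R_{i+1}$ directly from the arrangement of lifts without any further isotopy, so that the data feeding the whole chain is internally consistent. I would then fix a lift $V_S$ of $M\setminus S$; the lifts of $S$ cut $\tilde{M}$ into the regions $\tau^n(V_S)$, giving a locally constant ``$S$-level'' function off the lifts of $S$, with $\tau$ raising the level by one.

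The combinatorial heart is to attach to each step $R_i<_S R_{i+1}$ an integer displacement and to show it is strictly one-signed, so that telescoping the displacements around the cycle produces a net nonzero translation by a power of $\tau$; since the chain returns to $R_1$, the distinguished lift of $R_1$ would then have to coincide with a nontrivial $\tau$-translate of itself, which is absurd. To organise this I would first use Lemma \ref{orderdistancelemma} to cut down the cases: if the distances $\dist_{\ms(L)}(R_i,S)$ are not all equal then, reading around the cycle, some step has $\dist_{\ms(L)}(R_i,S)<\dist_{\ms(L)}(R_{i+1},S)$, whence Lemma \ref{orderdistancelemma} gives $R_{i+1}<_S R_i$, and together with the chain relation $R_i<_S R_{i+1}$ this is already a length-two cycle. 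Thus it suffices to rule out length-two cycles and cycles in which all $R_i$ are equidistant from $S$. Unwinding Definition \ref{orderingdefn} in either of these cases, $R_i<_S R_{i+1}$ becomes a genuine statement about the offset of the interleaving of the lifts of $R_i$ and $R_{i+1}$ relative to $V_S$, and each such relation forces the offset to point the same way; the telescoped offset around the cycle is then both zero (we have returned to the start) and strictly of one sign, the desired contradiction. In particular the case $k=2$ records that $<_S$ is antisymmetric, i.e.\ that the alternative in Lemma \ref{orderinglemma1} is exclusive.

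The step I expect to be the real obstacle is the equidistant case together with the boundary-link subtlety flagged after Proposition \ref{msdistanceprop}: when a component of $R_i$ and a component of $R_{i+1}$ can be made to coincide, the ``which side of $V_S$'' reading is a priori ambiguous and the displacement is not obviously well defined. Here I would lean on the product-region machinery of Section \ref{mfldssection}. Lemma \ref{makedisjointlemma} pins down the side on which a coincident pair of components must be pushed apart, while Proposition \ref{surfaceinproductprop}, Proposition \ref{disjointinteriorsprop} and Lemma \ref{almostdisjointlemma} control how the coincident components sit inside the product regions; together these force a well-defined offset for each step and guarantee that the telescoping argument survives the presence of coincident components. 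Granting this bookkeeping, the contradiction is immediate.
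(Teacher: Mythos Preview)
The paper does not give its own proof of this lemma: it is quoted verbatim from \cite{przytycki-2010} (their Lemma~5.5) and used as a black box, exactly as Lemmas~\ref{orderinglemma1} and~\ref{orderdistancelemma} are. There is therefore nothing in the present paper to compare your proposal against; the proof lives in Przytycki--Schultens.

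As to the proposal itself: the overall architecture is reasonable and in the right spirit --- passing to $\tilde M$, normalising all the surfaces simultaneously, and trying to extract a cocycle-type contradiction from the cyclic chain. The reduction via Lemma~\ref{orderdistancelemma} to either $k=2$ or the equidistant case is a nice observation. But the step you call the ``combinatorial heart'' is asserted rather than carried out: you do not say what the integer ``offset'' attached to $R_i<_S R_{i+1}$ actually is, nor why it is strictly one-signed. Unwinding Definition~\ref{orderingdefn} in the equidistant case, the relation $R_i<_S R_{i+1}$ translates into a condition on which $\tau$-translate of the canonical lift $V_{R_i}$ is the topmost one meeting $V_{R_{i+1}}$, and it is not immediate that this yields a quantity which telescopes to something nonzero; one has to use more than just the range of $V_S$-translates each $V_{R_j}$ meets. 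This is exactly the content of the Przytycki--Schultens argument, and your sketch does not reproduce it. The product-region bookkeeping you invoke in the last paragraph (Lemmas~\ref{makedisjointlemma}, \ref{almostdisjointlemma}, Propositions~\ref{surfaceinproductprop}, \ref{disjointinteriorsprop}) is relevant to handling coincident components once the offset is defined, but it does not by itself supply the missing definition or the sign argument. If you want a self-contained proof you should consult \cite{przytycki-2010} directly.
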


Now choose $L,L_1,L_2$ as in the statement of Theorem \ref{connectsumthm}. These will remain fixed for the remainder of this paper.

\begin{definition}
For $i=1,2$, let $K_i$ be the link component of $L_i$ along which the connected sum is performed.
Let $T_0$ be a fixed copy of $\sphere$ that divides $L$ into $L_1$ and $L_2$, and choose a product neighbourhood $T_0\times[1,2]$ such that $T_0=T_0\times\left\{\frac{3}{2}\right\}$ and $T_0\times\{i\}$ lies on the same side of $T_0$ as $L_i$ for $i=1,2$. We further require that both arcs of $L\cap(T_0\times[1,2])$ are of the form $\{x\}\times[1,2]$ for some $x\in T_0$.

Next, choose a regular neighbourhood $\nhd(L)$ of $L$, and let $M=\Sphere\setminus\nhd(L)$. In addition, let $M_0$ denote $M\cap (T_0\times[1,2])$ and for $i=1,2$ denote by $M_i$ the component of $M\setminus(T_0\times(1,2))$ that meets $L_i$. See Figure \ref{orderingpic2}a.
\end{definition}
\begin{figure}[htbp]
\centering
(a)
\psset{xunit=.30pt,yunit=.30pt,runit=.30pt}
\begin{pspicture}(521,511)
{
\newgray{lightgrey}{.8}
\newgray{lightishgrey}{.7}
\newgray{grey}{.6}
\newgray{midgrey}{.4}
\newgray{darkgrey}{.3}
}
{
\pscustom[linewidth=1.5,linecolor=black]
{
\newpath
\moveto(275.5,505.500051)
\lineto(275.5,250.500093)
\lineto(185.5,70.500003)
\moveto(410.5,505.500031)
\lineto(410.5,250.500073)
\lineto(320.5,70.500003)
\moveto(140.5,505.500011)
\lineto(140.5,250.500053)
\lineto(50.5,70.500003)
}
}
{
\pscustom[linestyle=none,fillstyle=solid,fillcolor=white]
{
\newpath
\moveto(100.5,320.500003)
\lineto(470.5,320.500003)
\lineto(470.5,290.500003)
\lineto(100.5,290.500003)
\lineto(100.5,320.500003)
\closepath
}
}
{
\pscustom[linestyle=none,fillstyle=solid,fillcolor=lightgrey]
{
\newpath
\moveto(465.5,299.500003)
\lineto(405.5,179.500003)
\lineto(35.5,179.500003)
\lineto(95.5,299.500003)
\lineto(465.5,299.500003)
\closepath
}
}
{
\pscustom[linewidth=1.5,linecolor=black]
{
\newpath
\moveto(100.5,290.500003)
\lineto(470.5,290.500003)
\moveto(100.5,320.500003)
\lineto(470.5,320.500003)
\moveto(195.5,180.000003)
\lineto(255.5,300.000003)
}
}
{
\pscustom[linewidth=1.5,linecolor=black,linestyle=dashed,dash=6 6]
{
\newpath
\moveto(60.5,180.000003)
\lineto(120.5,300.000003)
\moveto(330.5,180.000003)
\lineto(390.5,300.000003)
}
}
{
\pscustom[linestyle=none,fillstyle=solid,fillcolor=white]
{
\newpath
\moveto(30.500002,200.500003)
\lineto(410.5,200.500003)
\lineto(410.5,170.500003)
\lineto(30.500002,170.500003)
\lineto(30.500002,200.500003)
\closepath
}
}
{
\pscustom[linewidth=1.5,linecolor=black,fillstyle=solid,fillcolor=white]
{
\newpath
\moveto(414.97441006,185.50000038)
\curveto(414.97441006,177.21572914)(412.97114844,170.50000038)(410.5,170.50000038)
\curveto(408.02885156,170.50000038)(406.02558994,177.21572914)(406.02558994,185.50000038)
\curveto(406.02558994,193.78427163)(408.02885156,200.50000038)(410.5,200.50000038)
\curveto(412.97114844,200.50000038)(414.97441006,193.78427163)(414.97441006,185.50000038)
\closepath
\moveto(474.44882006,305.50000038)
\curveto(474.44882006,297.21572914)(472.44555844,290.50000038)(469.97441,290.50000038)
\curveto(467.50326156,290.50000038)(465.49999994,297.21572914)(465.49999994,305.50000038)
\curveto(465.49999994,313.78427163)(467.50326156,320.50000038)(469.97441,320.50000038)
\curveto(472.44555844,320.50000038)(474.44882006,313.78427163)(474.44882006,305.50000038)
\closepath
}
}
{
\pscustom[linewidth=1.5,linecolor=black]
{
\newpath
\moveto(40.5,170.500003)
\lineto(410.5,170.500003)
\moveto(40.5,200.500003)
\lineto(410.5,200.500003)
\moveto(416.02559,185.500003)
\lineto(406.02559,185.500003)
\moveto(40.5,40.500003)
\lineto(310.5,40.500003)
\moveto(100.47695844,290.59491251)
\curveto(98.02144613,289.6644552)(95.8058615,295.58340566)(95.52831166,303.81525839)
\curveto(95.25076183,312.04711111)(97.01634926,319.47463094)(99.47186156,320.40508825)
\curveto(99.80583053,320.53163776)(100.14298944,320.53163776)(100.4769584,320.40508826)
\moveto(40.47695844,170.59491251)
\curveto(38.02144613,169.6644552)(35.8058615,175.58340566)(35.52831166,183.81525839)
\curveto(35.25076183,192.04711111)(37.01634926,199.47463094)(39.47186156,200.40508825)
\curveto(39.80583053,200.53163776)(40.14298944,200.53163776)(40.4769584,200.40508826)
\moveto(475.5,305.500003)
\lineto(465.5,305.500003)
\moveto(140.5,505.500011)
\lineto(50.5,325.500023)
\lineto(50.5,70.500003)
\moveto(275.5,505.500051)
\lineto(185.5,325.500063)
\lineto(185.5,70.500003)
\moveto(410.5,505.500031)
\lineto(320.5,325.500043)
\lineto(320.5,70.500003)
\moveto(199.9626044,170.50005259)
\curveto(197.49146457,170.5219104)(195.49349547,177.25533502)(195.50001552,185.53957743)
\curveto(195.50545209,192.44716908)(196.91732903,198.44922355)(198.91977467,200.07737462)
\moveto(334.9626044,170.50005259)
\curveto(332.49146457,170.5219104)(330.49349547,177.25533502)(330.50001552,185.53957743)
\curveto(330.50622683,193.4315474)(332.33552781,199.95758044)(334.68472432,200.46853018)
\moveto(64.9626044,170.50005259)
\curveto(62.49146457,170.5219104)(60.49349547,177.25533502)(60.50001552,185.53957743)
\curveto(60.50545209,192.44716908)(61.91732903,198.44922355)(63.91977467,200.07737462)
\moveto(259.4370144,290.50005259)
\curveto(256.96587457,290.5219104)(254.96790547,297.25533502)(254.97442552,305.53957743)
\curveto(254.97986209,312.44716908)(256.39173903,318.44922355)(258.39418467,320.07737462)
\moveto(394.4370144,290.50005259)
\curveto(391.96587457,290.5219104)(389.96790547,297.25533502)(389.97442552,305.53957743)
\curveto(389.98063683,313.4315474)(391.80993781,319.95758044)(394.15913432,320.46853018)
\moveto(124.4370144,290.50005259)
\curveto(121.96587457,290.5219104)(119.96790547,297.25533502)(119.97442552,305.53957743)
\curveto(119.97986209,312.44716908)(121.39173903,318.44922355)(123.39418467,320.07737462)
}
}
{
\pscustom[linestyle=none,fillstyle=solid,fillcolor=black]
{
\newpath
\moveto(425.22558999,185.500003)
\curveto(425.22558999,185.94183081)(425.58376219,186.30000301)(426.02559,186.30000301)
\curveto(426.46741781,186.30000301)(426.82559001,185.94183081)(426.82559001,185.500003)
\curveto(426.82559001,185.05817519)(426.46741781,184.70000299)(426.02559,184.70000299)
\curveto(425.58376219,184.70000299)(425.22558999,185.05817519)(425.22558999,185.500003)
\closepath
\moveto(421.72558999,185.500003)
\curveto(421.72558999,185.94183081)(422.08376219,186.30000301)(422.52559,186.30000301)
\curveto(422.96741781,186.30000301)(423.32559001,185.94183081)(423.32559001,185.500003)
\curveto(423.32559001,185.05817519)(422.96741781,184.70000299)(422.52559,184.70000299)
\curveto(422.08376219,184.70000299)(421.72558999,185.05817519)(421.72558999,185.500003)
\closepath
\moveto(418.22558999,185.500003)
\curveto(418.22558999,185.94183081)(418.58376219,186.30000301)(419.02559,186.30000301)
\curveto(419.46741781,186.30000301)(419.82559001,185.94183081)(419.82559001,185.500003)
\curveto(419.82559001,185.05817519)(419.46741781,184.70000299)(419.02559,184.70000299)
\curveto(418.58376219,184.70000299)(418.22558999,185.05817519)(418.22558999,185.500003)
\closepath
\moveto(484.69999999,305.500003)
\curveto(484.69999999,305.94183081)(485.05817219,306.30000301)(485.5,306.30000301)
\curveto(485.94182781,306.30000301)(486.30000001,305.94183081)(486.30000001,305.500003)
\curveto(486.30000001,305.05817519)(485.94182781,304.70000299)(485.5,304.70000299)
\curveto(485.05817219,304.70000299)(484.69999999,305.05817519)(484.69999999,305.500003)
\closepath
\moveto(481.19999999,305.500003)
\curveto(481.19999999,305.94183081)(481.55817219,306.30000301)(482,306.30000301)
\curveto(482.44182781,306.30000301)(482.80000001,305.94183081)(482.80000001,305.500003)
\curveto(482.80000001,305.05817519)(482.44182781,304.70000299)(482,304.70000299)
\curveto(481.55817219,304.70000299)(481.19999999,305.05817519)(481.19999999,305.500003)
\closepath
\moveto(477.69999999,305.500003)
\curveto(477.69999999,305.94183081)(478.05817219,306.30000301)(478.5,306.30000301)
\curveto(478.94182781,306.30000301)(479.30000001,305.94183081)(479.30000001,305.500003)
\curveto(479.30000001,305.05817519)(478.94182781,304.70000299)(478.5,304.70000299)
\curveto(478.05817219,304.70000299)(477.69999999,305.05817519)(477.69999999,305.500003)
\closepath
}
}
{
\put(490,290){$L$}
\put(440,230){$S_0^*$}
\put(240,250){$\sigma^*$}
\put(200,500){\small$T_0\times\{\frac{3}{2}\}$}
\put(10,500){\small$T_0\times\{1\}$}
\put(370,500){\small$T_0\times\{2\}$}
\put(160,10){$M_0$}
\put(0,80){$M_1$}
\put(340,80){$M_2$}
\put(390,140){$\nhd(L)$}
}
\end{pspicture}
(b)
\psset{xunit=.30pt,yunit=.30pt,runit=.30pt}
\begin{pspicture}(521,511)
{
\newgray{lightgrey}{.8}
\newgray{lightishgrey}{.7}
\newgray{grey}{.6}
\newgray{midgrey}{.4}
\newgray{darkgrey}{.3}
}
{
\pscustom[linewidth=1.5,linecolor=black]
{
\newpath
\moveto(275.5,505.50004)
\lineto(275.5,250.50007)
\lineto(185.5,70.49998)
\moveto(410.5,505.50002)
\lineto(410.5,250.50005)
\lineto(320.5,70.49998)
\moveto(140.5,505.5)
\lineto(140.5,250.50003)
\lineto(50.5,70.49998)
}
}
{
\pscustom[linestyle=none,fillstyle=solid,fillcolor=white]
{
\newpath
\moveto(128.5,320.5)
\lineto(60.5,185.5)
\lineto(325.5,185.5)
\lineto(393.5,320.5)
\lineto(128.5,320.5)
\closepath
}
}
{
\pscustom[linestyle=none,fillstyle=solid,fillcolor=white]
{
\newpath
\moveto(110.5,320.50003)
\lineto(480.5,320.50003)
\lineto(480.5,290.50003)
\lineto(110.5,290.50003)
\lineto(110.5,320.50003)
\closepath
}
}
{
\pscustom[linestyle=none,fillstyle=solid,fillcolor=lightgrey]
{
\newpath
\moveto(466.5,299.50003)
\lineto(406.5,179.5)
\lineto(331.5,179.49997)
\lineto(391.5,299.5)
\lineto(466.5,299.50003)
\closepath
}
}
{
\pscustom[linewidth=1.5,linecolor=black]
{
\newpath
\moveto(395.5,290.5)
\lineto(470.5,290.50003)
\moveto(123.5,320.5)
\lineto(470.5,320.50003)
\moveto(335.5,170.49997)
\lineto(395.5,290.5)
}
}
{
\pscustom[linewidth=1.5,linecolor=black,linestyle=dashed,dash=6 6]
{
\newpath
\moveto(330.5,177.49997)
\lineto(390.5,297.5)
}
}
{
\pscustom[linestyle=none,fillstyle=solid,fillcolor=white]
{
\newpath
\moveto(30.5,200.5)
\lineto(410.5,200.5)
\lineto(410.5,170.5)
\lineto(30.5,170.5)
\lineto(30.5,200.5)
\closepath
}
}
{
\pscustom[linewidth=1,linecolor=midgrey]
{
\newpath
\moveto(405.5,185.5)
\lineto(200.5,185.5)
\lineto(260.5,305.50003)
\lineto(465.5,305.5)
}
}
{
\pscustom[linewidth=1.5,linecolor=black,fillstyle=solid,fillcolor=white]
{
\newpath
\moveto(414.97441006,185.49999738)
\curveto(414.97441006,177.21572614)(412.97114844,170.49999738)(410.5,170.49999738)
\curveto(408.02885156,170.49999738)(406.02558994,177.21572614)(406.02558994,185.49999738)
\curveto(406.02558994,193.78426863)(408.02885156,200.49999738)(410.5,200.49999738)
\curveto(412.97114844,200.49999738)(414.97441006,193.78426863)(414.97441006,185.49999738)
\closepath
\moveto(474.44882006,305.50001738)
\curveto(474.44882006,297.21574614)(472.44555844,290.50001738)(469.97441,290.50001738)
\curveto(467.50326156,290.50001738)(465.49999994,297.21574614)(465.49999994,305.50001738)
\curveto(465.49999994,313.78428863)(467.50326156,320.50001738)(469.97441,320.50001738)
\curveto(472.44555844,320.50001738)(474.44882006,313.78428863)(474.44882006,305.50001738)
\closepath
}
}
{
\newrgbcolor{curcolor}{0 0 0}
\pscustom[linewidth=1,linecolor=black]
{
\newpath
\moveto(65.5,170.5)
\lineto(410.5,170.5)
\moveto(333.5,200.5)
\lineto(410.5,200.5)
\moveto(416.02559,185.49997)
\lineto(406.02559,185.49997)
\moveto(475.5,305.50003)
\lineto(465.5,305.50003)
\moveto(140.5,505.5)
\lineto(50.5,325.5)
\lineto(50.5,70.49998)
\moveto(333.5,200.49997)
\lineto(393.5,320.5)
\moveto(63.5,200.49997)
\lineto(123.5,320.5)
\moveto(395.47695844,290.59490951)
\curveto(393.02144613,289.6644522)(390.8058615,295.58340266)(390.52831166,303.81525539)
\curveto(390.25076183,312.04710811)(392.01634926,319.47462794)(394.47186156,320.40508525)
\curveto(394.80583053,320.53163476)(395.14298944,320.53163476)(395.4769584,320.40508526)
\moveto(275.5,505.50004)
\lineto(185.5,325.50004)
\lineto(185.5,70.49998)
\moveto(410.5,505.50002)
\lineto(320.5,325.50002)
\lineto(320.5,70.49998)
\moveto(198.5,200.5)
\lineto(258.5,320.50003)
\moveto(199.9626044,170.50004959)
\curveto(197.49146457,170.5219074)(195.49349547,177.25533202)(195.50001552,185.53957443)
\curveto(195.50545209,192.44716608)(196.91732903,198.44922055)(198.91977467,200.07737162)
\moveto(334.9626044,170.50004959)
\curveto(332.49146457,170.5219074)(330.49349547,177.25533202)(330.50001552,185.53957443)
\curveto(330.50622683,193.4315444)(332.33552781,199.95757744)(334.68472432,200.46852718)
\moveto(64.9626044,170.50004959)
\curveto(62.49146457,170.5219074)(60.49349547,177.25533202)(60.50001552,185.53957443)
\curveto(60.50545209,192.44716608)(61.91732903,198.44922055)(63.91977467,200.07737162)
}
}
{
\pscustom[linestyle=none,fillstyle=solid,fillcolor=black]
{
\newpath
\moveto(425.22558999,185.49997)
\curveto(425.22558999,185.94179781)(425.58376219,186.29997001)(426.02559,186.29997001)
\curveto(426.46741781,186.29997001)(426.82559001,185.94179781)(426.82559001,185.49997)
\curveto(426.82559001,185.05814219)(426.46741781,184.69996999)(426.02559,184.69996999)
\curveto(425.58376219,184.69996999)(425.22558999,185.05814219)(425.22558999,185.49997)
\closepath
\moveto(421.72558999,185.49997)
\curveto(421.72558999,185.94179781)(422.08376219,186.29997001)(422.52559,186.29997001)
\curveto(422.96741781,186.29997001)(423.32559001,185.94179781)(423.32559001,185.49997)
\curveto(423.32559001,185.05814219)(422.96741781,184.69996999)(422.52559,184.69996999)
\curveto(422.08376219,184.69996999)(421.72558999,185.05814219)(421.72558999,185.49997)
\closepath
\moveto(418.22558999,185.49997)
\curveto(418.22558999,185.94179781)(418.58376219,186.29997001)(419.02559,186.29997001)
\curveto(419.46741781,186.29997001)(419.82559001,185.94179781)(419.82559001,185.49997)
\curveto(419.82559001,185.05814219)(419.46741781,184.69996999)(419.02559,184.69996999)
\curveto(418.58376219,184.69996999)(418.22558999,185.05814219)(418.22558999,185.49997)
\closepath
\moveto(484.69999999,305.50003)
\curveto(484.69999999,305.94185781)(485.05817219,306.30003001)(485.5,306.30003001)
\curveto(485.94182781,306.30003001)(486.30000001,305.94185781)(486.30000001,305.50003)
\curveto(486.30000001,305.05820219)(485.94182781,304.70002999)(485.5,304.70002999)
\curveto(485.05817219,304.70002999)(484.69999999,305.05820219)(484.69999999,305.50003)
\closepath
\moveto(481.19999999,305.50003)
\curveto(481.19999999,305.94185781)(481.55817219,306.30003001)(482,306.30003001)
\curveto(482.44182781,306.30003001)(482.80000001,305.94185781)(482.80000001,305.50003)
\curveto(482.80000001,305.05820219)(482.44182781,304.70002999)(482,304.70002999)
\curveto(481.55817219,304.70002999)(481.19999999,305.05820219)(481.19999999,305.50003)
\closepath
\moveto(477.69999999,305.50003)
\curveto(477.69999999,305.94185781)(478.05817219,306.30003001)(478.5,306.30003001)
\curveto(478.94182781,306.30003001)(479.30000001,305.94185781)(479.30000001,305.50003)
\curveto(479.30000001,305.05820219)(478.94182781,304.70002999)(478.5,304.70002999)
\curveto(478.05817219,304.70002999)(477.69999999,305.05820219)(477.69999999,305.50003)
\closepath
}
}
{
\put(490,290){$L_2$}
\put(450,240){$S_2^*$}
\put(390,140){$\nhd(L_2)$}
\put(240,250){$\sigma^*$}
}
\end{pspicture}
\caption{\label{orderingpic2}}
\end{figure}

\begin{definition}
Choose a taut Seifert surface $S_0$ for $L$. We will use $S_0$ as a basepoint for $\ms(L)$.  Isotope $S_0$ to have minimal intersection with $T_0$. Then $S_0\cap T_0$ is a single arc $\sigma^*$. Further ensure that $S_0\cap M_0=\sigma^*\times[1,2]$. Let $S_0^*$ denote this copy of $S_0$, considered as a fixed surface rather than up to isotopy. Again, see Figure \ref{orderingpic2}a.

The link made up of the part of $L$ on the $M_1$ side of $T_0$ together with the arc $\sigma^*$ is $L_1$, and $M_1$ is homeomorphic to $\Sphere\setminus\nhd(L_1)$. 
The same is true for $L_2$.

The sphere $T_0$ divides $S_0^*$ into Seifert surfaces $S_1,S_2$ for $L_1,L_2$ respectively. Since $S_0$ is taut, so are $S_1,S_2$. Let $S_1^*, S_2^*$ be these surfaces, again considered as fixed.
Define curves $\lambda^*,\lambda_1^*,\lambda_2^*$ on $\partial M,\partial M_1,\partial M_2$ respectively, also seen as fixed, by $\lambda^*=\partial S_0^*$ and $\lambda_i^*=\partial S_i^*$ for $i=1,2$. By an appropriate choice of $\nhd(L_i)$ for $i=1,2$ we may ensure that $(\sigma^*\times\{i\})\cap M\subset\lambda_i^*$. See Figure \ref{orderingpic2}b.
\end{definition}

\begin{definition}
Define $\leq$ on $\V(\ms(L_1))$ by ${R\leq R'} \Leftrightarrow {(R<_{S_1}R'} \textrm{ or }{R=R')}$. 
Similarly, define $\leq$ on $\V(\ms(L_2))$ by ${R\leq R'} \Leftrightarrow {(R<_{S_2}R'} \textrm{ or } {R=R')}$. 
\end{definition}

\begin{corollary}
The pairs $(\ms(L_1),\leq),(\ms(L_2),\leq)$ are ordered simplicial complexes.
\end{corollary}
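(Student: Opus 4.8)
The plan is to verify the three defining properties (P1), (P2), (P3) of an ordered simplicial complex directly for each of $i=1,2$, using only the properties of the Przytycki--Schultens relation $<_{S}$ recorded in Lemmas \ref{orderinglemma1} and \ref{orderinglemma2}. Throughout I would fix $i$ and abbreviate $<$ for $<_{S_i}$, observing first that $S_i$ is itself a taut Seifert surface for $L_i$ and hence a vertex of $\ms(L_i)$, so that these lemmas apply with $L=L_i$ and $S=S_i$. Recall that by definition $R\leq R'$ means $R< R'$ or $R=R'$, and that $<$ is only ever a relation between adjacent vertices; the latter observation is what makes it legitimate to check the weaker axiom (P2)$'$ in place of (P2).

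For (P1), I would suppose $u\leq v$ and $v\leq u$ with $u\neq v$. Then $u< v$ and $v< u$, which is exactly a cycle $u< v< u$ of the kind forbidden by Lemma \ref{orderinglemma2} in the case $k=2$; hence $u=v$. For (P2)$'$ I would simply note that if $u,v$ are adjacent then Lemma \ref{orderinglemma1} gives $u< v$ or $v< u$, so $u\leq v$ or $v\leq u$.

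For (P3), let $u,v,w$ be the (distinct) vertices of a $2$--simplex, so that they are pairwise adjacent, and suppose $u\leq v$ and $v\leq w$. If $u=v$ or $v=w$ the conclusion $u\leq w$ is immediate, so I would assume $u< v$ and $v< w$. Since $u$ and $w$ are adjacent, Lemma \ref{orderinglemma1} gives $u< w$ or $w< u$; in the latter case $u< v< w< u$ is a cycle of length $k=3$, contradicting Lemma \ref{orderinglemma2}. Hence $u< w$, i.e. $u\leq w$, establishing (P3) and with it that $(\ms(L_i),\leq)$ is an ordered simplicial complex.

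I do not expect a genuine obstacle: the content is carried entirely by the three cited lemmas, which are tailored to yield exactly (P1)--(P3)---antisymmetry is the $k=2$ instance of the no-cycles lemma, comparability of adjacent vertices is Lemma \ref{orderinglemma1}, and transitivity on a $2$--simplex reduces to the $k=3$ instance once Lemma \ref{orderinglemma1} has been used to orient the pair $(u,w)$. The only points requiring any care are to treat the reflexive (equality) cases separately and to remember that $<$ compares only adjacent vertices, which is precisely why appealing to (P2)$'$ rather than the full (P2) suffices.
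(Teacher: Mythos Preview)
Your proof is correct and follows exactly the paper's approach: Lemma \ref{orderinglemma1} gives (P2)$'$, Lemma \ref{orderinglemma2} gives (P1), and together they give (P3). You have simply spelled out the details that the paper's one-line proof leaves implicit.
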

\begin{proof}
Lemma \ref{orderinglemma1} gives (P2)$'$, and Lemma \ref{orderinglemma2} gives (P1). Together they give (P3).
\end{proof}

These two orderings, together with that on $\V(\mathcal{Z})$, allow us to apply Theorem \ref{productcomplextheorem}. We will use this to give a triangulation of $|\ms(L_1)|\times|\ms(L_2)|\times\mathbb{R}$ that agrees with the triangulation of $\ms(L)$ in a natural way.

\section{Choosing representatives of isotopy classes}\label{repssection}

Given taut Seifert surfaces $R_i$ of $L_i$ for $i=1,2$, we can isotope the surfaces so that $\partial R_i=\lambda_i^*$. Having done so, we can glue each of the two surfaces to the rectangle $\sigma^*\times[1,2]$ to form a taut Seifert surface $R$ of $L$ with $\partial R=\lambda^*$.

An isotopy of any Seifert surface $R$ for $L$ can be split into an isotopy that fixes $\partial R$ and an isotopy supported in a neighbourhood of $\partial M$. Thus an isotopy class relative to the boundary corresponds to a fixed element of the isotopy class together with a winding number for each boundary component.
To measure the winding numbers, we need to decide what it means to have winding number 0. We use $S^*_i$ as a basepoint for $\ms(L_i)$, setting this to have winding number 0 at each boundary component. We want to define what it means for another surface to have winding number 0. In practice we will only be concerned with the winding number at $K_i$, but it is convenient to choose surfaces fixed at every boundary component.

Thus our present aim is to find a fixed representative $R^*$ for each vertex $R$ of $\ms(L_i)$. We choose these such that $\partial R^*=\lambda_i^*$. We also want these representatives to interact well with regard to the ordering $\leq$.

\begin{definition}
Let $R,R'$ be $\partial$--almost disjoint taut Seifert surfaces for $L_i$. Pick a component $K'$ of $L_i$, and consider $R,R'$ near $K'$. It may be that the components that meet $K'$ coincide. If not, one of the two surfaces lies `above' the other, where this is measured in the positive direction around $K'$. Write $R\leq_{K'} R'$ if either $R'$ lies above $R$ or the two coincide.
\end{definition}

\begin{definition}
Define a relation $\leq_{\partial}$ on isotopy classes of taut Seifert surfaces relative to the boundary by $R\leq_{\partial} R'$ if there are representatives $R_b$ of $R$ and $R'_a$ of $R'$ such that $R_b,R'_a$ are $\partial$--almost disjoint and $R_b\leq_{K'}R'_a$ for each component $K'$ of $L_i$.
\end{definition}

\begin{lemma}
The relation $\leq_{\partial}$ is antisymmetric.
\end{lemma}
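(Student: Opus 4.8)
The plan is to argue that the two hypotheses $R\le_\partial R'$ and $R'\le_\partial R$ force $R$ and $R'$ to coincide near every component $K'$ of $L_i$, and then to use a product region to promote this agreement along the whole boundary to an isotopy rel boundary. Accordingly, suppose $R\le_\partial R'$ and $R'\le_\partial R$. By definition there are $\partial$--almost disjoint representatives $R_b$ of $R$ and $R'_a$ of $R'$ with $R_b\le_{K'}R'_a$ for every $K'$, and $\partial$--almost disjoint representatives $R'_b$ of $R'$ and $R_a$ of $R$ with $R'_b\le_{K'}R_a$ for every $K'$. The surfaces involved all have boundary $\lambda_i^*$.

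The first thing to establish is that the relation $\le_{K'}$ depends only on the rel--boundary isotopy classes of the two surfaces, not on the chosen $\partial$--almost disjoint representatives. Near $K'$ every taut Seifert surface for $L_i$ meets $\partial M_i$ in the same longitude $\lambda_i^*$, so the position of the component of one surface relative to that of the other is recorded by a single integer: the relative winding of the two collars about $K'$. This integer is unchanged by any isotopy fixing $\lambda_i^*$, so it is an invariant of the two classes, and $R\le_{K'}R'$ holds exactly when it is nonnegative, vanishing precisely when the two components coincide near $K'$. Granting this, $R_b\le_{K'}R'_a$ says the relative winding of $R$ and $R'$ at $K'$ is $\ge 0$, while $R'_b\le_{K'}R_a$ says it is $\le 0$; hence it vanishes, and $R$ and $R'$ coincide near $K'$, for every component $K'$ of $L_i$.

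It then remains to deduce that $R=R'$ as classes rel boundary. After an isotopy rel boundary I may assume $R_b$ and $R'_a$ agree on a collar of each boundary component while still having disjoint interiors; since the winding vanishes everywhere, no component of $R'_a$ spirals relative to $R_b$, so each component of $R'_a$ peels off to a definite side of the corresponding component of $R_b$. I would then invoke the product region machinery of Section \ref{mfldssection}: a component of $R_b$ and the matching component of $R'_a$ are incompressible, $\partial$--incompressible surfaces with common boundary and disjoint interiors lying to one side, so by Proposition \ref{productregionsprop} they bound a product region, which, because their boundaries already coincide, may be taken with $\rho_T=\partial T$ as in Corollary \ref{isotopyrelbdycor}. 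Such a region realises the two components as parallel, hence isotopic rel boundary; assembling these component isotopies gives an isotopy rel boundary from $R_b$ to $R'_a$, so $R=R'$.

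I expect the main obstacle to be exactly this final implication — that vanishing of the winding at \emph{every} boundary component, together with disjoint interiors, forces an isotopy rel boundary — in the case of a boundary link, where $R_b$ and $R'_a$ may be disconnected. There a component of $R_b$ may share only a boundary collar with a component of $R'_a$, so I cannot immediately assert that the matched components bound a product region; this is precisely the delicate situation of distinct vertices that nonetheless agree along the whole boundary. To control it I anticipate needing Lemma \ref{almostdisjointlemma}, to move coincident or boundary--coincident components past one another without disturbing $R_a$ or $R_b$, together with careful book-keeping of how components of the two surfaces are paired, so that the product region analysis can be applied componentwise and the possibility of equal winding everywhere yet $R\neq R'$ is ruled out.
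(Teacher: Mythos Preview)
Your argument has a genuine gap at the step you label as ``the first thing to establish''. There is no integer ``relative winding'' here: for two $\partial$--almost disjoint surfaces with common boundary $\lambda_i^*$, the components meeting $K'$ either coincide entirely or have disjoint interiors, and in the latter case one sits just above and the other just below near $K'$. So $\le_{K'}$ is a trichotomy (above/below/coincide), not an integer-valued invariant, and your sentence ``this integer is unchanged by any isotopy fixing $\lambda_i^*$'' is exactly the statement of the lemma in disguise, asserted without proof. Indeed, when the two components meeting $K'$ \emph{are} isotopic rel boundary, the relation is genuinely not well defined --- you can realise either order --- so your claim is literally false as stated. The real content is that when the components are \emph{not} isotopic rel boundary, the above/below dichotomy is rigid; but showing that already requires the product-region idea you defer to Step~3. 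Your Step~3 is then built on a false picture: you say the surfaces ``agree on a collar of each boundary component while still having disjoint interiors'', but $\partial$--almost disjoint surfaces with the same boundary cannot agree on a collar without coinciding entirely, so there is nothing to ``peel off''.

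The paper's proof sidesteps the well-definedness question. It fixes a single representative of $R'$ and produces two representatives $R_a,R_b$ of $R$ with $R'$ below $R_a$ and above $R_b$ at every $K'$. After stripping off any component of $R_a$ that is isotopic to a component of $R'$ (keeping one that is not), Corollary~\ref{isotopyrelbdycor} gives a product region $N$ between $R_a$ and $R_b$ with $\rho_T=\partial T$; because $|R_a\cap R_b|$ was already minimised by isotopies away from $R'$, this $N$ must contain a component $R'_0$ of $R'$, and Proposition~\ref{surfaceinproductprop} then forces $R'_0$ to be parallel to $R_a$ --- contradicting the stripping step. The trapping of $R'$ between two rel-boundary isotopic copies of $R$ is the missing idea in your attempt.
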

\begin{proof}
Suppose otherwise. Choose $R,R'$ with $R\leq_{\partial}R'\leq_{\partial}R$ and $R\neq R'$. Consider $R'$ as fixed, and choose representatives $R_a,R_b$ of $R$ such that $R_a$ shows that $R'\leq_{\partial} R$ and $R_b$ shows that $R\leq_{\partial}R'$.    
Note that $\partial R'=\partial R_a=\partial R_b=\lambda^*_i$.

The surface $R_a$ might be disconnected. However, at least one component of $R_a$ is not isotopic to any component of $R'$. Remove all other components of $R_a$, and the corresponding ones of $R_b$. Also remove all components of $R'$ that are disjoint from the new $R_a$.

As $R_a,R_b$ only meet a neighbourhood of $R'$ along their boundaries, where we know how they are positioned, we see that $R_a,R_b$ can be put into general position by an isotopy away from a neighbourhood of $R'$. Choose this isotopy to minimise $|R_a\cap R_b|$.

Consider the product region $N$ between $R_a$ and $R_b$ given by Corollary \ref{isotopyrelbdycor}. The isotopy of $R_a$ defined by $N$ does not move $\partial R_a$, so our positioning of $R_a,R_b$ means that $N$ must meet $R'$. Let $R'_0$ be a component of $R'$ that meets $N$. Then $R'_0\subset N$. Since $R'_0$ is $\partial$--almost disjoint from $R_a$ and from $R_b$, Proposition \ref{surfaceinproductprop} gives that $R'_0$ is isotopic to $R_a$, which is a contradiction.
\end{proof}

\begin{proposition}\label{representativesprop}
It is possible to choose a representative $R^*$ for each vertex $R$ of $\ms(L_i)$ such that the following conditions hold for any pair $(R_a,R_b)$ of adjacent vertices of $\ms(L_i)$.
\begin{itemize}
 \item $\partial R_a^*=\partial R_b^*=\lambda_i^*$.
 \item There are $\partial$--almost disjoint copies $R'_j$ of $R_j$, for $j=a,b$, that are isotopic to $R_j^*$ via isotopies fixing $\partial R'_j$ and that demonstrate their adjacency.
 \item $R_b^*\leq_{\partial} R_a^*$ (equivalently, $R'_b\leq_{\partial} R'_a$) if and only if $R_b\leq R_a$.
\end{itemize}
\end{proposition}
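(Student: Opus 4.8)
The plan is to fix the representatives by calibrating the one remaining degree of freedom — the winding numbers of the boundary curves — against the basepoint $S_i^*$, using the infinite cyclic cover $\tilde M_i$ of $M_i$ to translate between the near-boundary relation $\leq_{\partial}$ and the combinatorial relation $\leq$. Since we insist that $\partial R^*=\lambda_i^*$, the first bullet holds by construction, and a rel-boundary isotopy class inside the free isotopy class of $R$ is then specified by a winding number at each component of $L_i$. In practice only the winding at $K_i$ will matter, so I would make an arbitrary but fixed choice at the remaining components and concentrate on $K_i$.

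For the construction I would work with a fixed lift $V_{S_i}$ of $M_i\setminus S_i$ and generator $\tau$ of the covering group. A change of the winding number at $K_i$ by $1$ is realised by a Dehn twist in a collar of the boundary torus over $K_i$ in the meridian direction; as the meridian has linking number $1$, this shifts the lift meeting that torus by $\tau^{\pm 1}$ in $\tilde M_i$. Thus choosing the winding number of $R^*$ at $K_i$ is the same as selecting a preferred lift of $R$. For each vertex $R$ I would take the lift distinguished exactly as the ``$R$''-lift of Definition \ref{orderingdefn}, namely the lift $V_R$ of $M_i\setminus R$ with $V_R\cap V_{S_i}\neq\emptyset$ and $V_R\cap\tau(V_{S_i})=\emptyset$, and define $R^*$ to be the representative whose winding at $K_i$ places the bounding lift of $R$ in this position. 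Because this is phrased relative to the single fixed $V_{S_i}$, the choice is canonical and basepoint-consistent across all vertices simultaneously, which is what a single fixed $R^*$ must satisfy.

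Next I would establish the second bullet. Given adjacent $R_a,R_b$, Lemma \ref{makedisjointlemma} lets me isotope them to disjoint surfaces realising the adjacency, and a small further push produces $\partial$--almost disjoint copies $R'_a,R'_b$. The content is that each $R'_j$ is rel-boundary isotopic to $R_j^*$, i.e. carries the correct winding at $K_i$. For components of $R'_j$ not coinciding with a component of the other surface there is no ambiguity, while for coinciding components the side on which the product region lies is forced — this is precisely the determined-side clause of Lemma \ref{makedisjointlemma} — so the winding is pinned down and matches the canonical choice. Corollary \ref{isotopyrelbdycor} and Lemma \ref{almostdisjointlemma} then let me compare the realising copies with $R_j^*$ through product regions that fix the boundary.

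Finally, for the third bullet I would show that near $K_i$ the statement ``$R_a^*$ lies above $R_b^*$'' corresponds in $\tilde M_i$ to the lift of $R'_b$ reaching up to meet $V_{S_i}$, which is exactly the defining condition $R_b<_{S_i}R_a$ of Definition \ref{orderingdefn}; together with the equality case this yields $R_b^*\leq_{\partial}R_a^*\Leftrightarrow R_b\leq R_a$. The main obstacle, and where I expect most of the care to be needed, is consistency across all boundary components at once: $\leq_{\partial}$ demands $R_b^*\leq_{K'}R_a^*$ for \emph{every} component $K'$, and in the boundary-link case the surfaces may be disconnected with coinciding components, so I cannot simply invoke connectedness. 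I would rule out a surface being above at one component and below at another by reading the relative lift position off the adjacency (the two realising surfaces occupy exactly two consecutive lifts), using the arc-intersection criterion of Lemma \ref{arcintersectionlemma} to force a single global side and the determined-side clause of Lemma \ref{makedisjointlemma} to control the coinciding components.
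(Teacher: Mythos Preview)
Your overall strategy—using the distinguished lift of Definition \ref{orderingdefn} to pin down the winding number—is the same as the paper's. However, there are two points where your proposal diverges and creates difficulties that the paper avoids.

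First, you skip well-definedness. To select the lift $V_R$ with $V_R\cap V_{S_i}\neq\emptyset$ and $V_R\cap\tau(V_{S_i})=\emptyset$, you must first isotope $R$ to be almost transverse to $S_i^*$ with simplified intersection; this involves a choice, and different choices could a priori yield different windings. ``Canonical and basepoint-consistent'' is an assertion, not a proof. The paper spends a paragraph here: given two such positionings $R,R'$, the isotopy between them lifts to take $V_R$ to $\tau^m(V_{R'})$, and then Proposition \ref{disjointinteriorsprop} combined with $\{n:V_R\cap V_n\neq\emptyset\}=\{0,-1,\ldots,-\dist(S_i,R)\}$ forces $m=0$. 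Without this you do not have a well-defined $R^*$, and your later claim that the realising copies $R'_j$ are rel-boundary isotopic to $R_j^*$ has nothing to anchor it.

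Second, your decision to make an ``arbitrary but fixed'' choice of winding at the components other than $K_i$ manufactures the very obstacle you worry about in your last paragraph. The paper instead aligns $\partial\tilde R$ with $\partial\tilde S_i$ at \emph{every} boundary component. With that choice, once $R_a,R_b,S_i^*$ are positioned simultaneously via Lemma \ref{maketransverselemma} (rather than pairwise via Lemma \ref{makedisjointlemma}), the lifts $\tilde R_a,\tilde R_b,\tilde S_i$ all share the same boundary in $\tilde M_i$, and the defining condition $R_b<_{S_i}R_a$ says precisely that $\tilde R_b$ lies globally below $\tilde R_a$; this gives $R_b^*\leq_\partial R_a^*$ at every component $K'$ at once, with no separate argument needed for disconnected surfaces. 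It also handles the second bullet cleanly: the simultaneously-positioned copies \emph{are} $R_a^*,R_b^*$ up to boundary-fixing isotopy by the well-definedness already established, so no comparison through product regions is required.
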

\begin{proof}
Without loss of generality, $i=1$. Let $\tilde{M}_1$ be the infinite cyclic cover of $M_1$, with covering transformation $\tau_1$.
As in Proposition \ref{msdistanceprop}, construct a lift $V_n$ of $M_1\setminus S_1^*$ for $n\in\mathbb{Z}$. Let $\tilde{S}_1$ be the lift of $S_1^*$ that lies between $V_0$ and $V_1$.

We have already chosen the representative $S_1^*$ for $S_1$. Let $R$ be a vertex of $\ms(L_1)$ other than $S_1$. Isotope $R$ to be almost transverse to and have simplified intersection with $S_1^*$. Let $V_R$ be the lift of $M_1\setminus R$ such that $V_R\cap V_0\neq\emptyset$ but $V_R\cap V_1=\emptyset$, and let $\tilde{R}$ be the lift of $R$ that lies between $V_R$ and $\tau_1(V_R)$. From Proposition \ref{realisedistanceprop} we see that $R$ minimises $\max\{n:V_{R}\cap V_n\neq\emptyset\}-\min\{n:V_{R}\cap V_n\neq\emptyset\}$. By Proposition \ref{msdistanceprop} this means that $\{n:V_R\cap V_n\neq\emptyset\}=\{0,-1,\ldots,-\dist_{\ms(L_1)}(S_1,R)\}$.

By an isotopy close to the boundary, move $\partial R$ around $\partial\!\nhd(L_1)$ until $\partial \tilde{R}$ coincides with $\partial \tilde{S}_1$. Note that this does not change $\{n:V_R\cap{} V_n\neq\emptyset\}$. Take the resulting surface to be $R^*$.

Our first aim is to show that the choice of $R^*$ is well-defined up to isotopy relative to the boundary. That is, we wish to check that the choice of winding number is independent of the choice of isotopy made when constructing $R^*$.
Let $R'$ be any other copy of $R$. Construct $(R')^*$ as described. 
Then $\partial \tilde{R}=\partial \tilde{R}'=\partial \tilde{S}_1$. 
Note that we do not know how $R$ and $R'$ intersect.
However, $R$ is isotopic to $R'$ in $M_1$. This isotopy lifts to an isotopy in $\tilde{M}_1$ from $\tilde{R}$ to a lift of $R'$. This lift is $\tau_1^m(\tilde{R}')$ for some $m\in\mathbb{Z}$. The isotopy also takes $V_{R}$ to $\tau_1^m(V_{R'})$.

Suppose $m\neq 0$. Without loss of generality, $m>0$. Then the submanifold $\tau_1^m(V_{R'}\cup\tilde{R}'\cup\tau_1^{-1}(\tilde{R}'))$ of $\tilde{M}_1$ has interior disjoint from the submanifold $\tau_1^{-k}(V_0\cup \tilde{S}_1\cup\tau_1^{-1}(\tilde{S}_1))$, where $k=\dist_{\ms(L_1)}(S_1,R)$. Thus by Proposition \ref{disjointinteriorsprop} we see that $V_R$ is disjoint from $V_{-k}$. This contradicts that $\{n:V_R\cap V_n\neq\emptyset\}=\{0,-1,\ldots,-k\}$.
Hence $m=0$. We can therefore modify the isotopy from $R$ to $R'$ near the boundary to keep $\partial R$ fixed throughout. Thus $R^*$ and $(R')^*$ are isotopic relative to the boundary, as required.

It remains to show that our chosen representatives have the required properties. Let $R_a,R_b$ be adjacent vertices of $\ms(L_1)$ with $R_b\leq R_a$. Position them relative to $S_1^*$ and each other as in Lemma \ref{maketransverselemma}. Then by Proposition \ref{realisedistanceprop} we know that $R_a,R_b$ realise their adjacency, and so are almost disjoint. 

Now consider the lifts used to demonstrate that $R_b<_{S_1} R_a$, as in Definition \ref{orderingdefn}. Taking $V_0$ as the lift of $M_1\setminus S_1^*$ we see that $V_{R_a}$ is the required lift of $M_1\setminus R_a$. Let $V_{R_b}'$ be the required lift of $M_1\setminus R_b$. Then $V_{R_b}'$ meets $V_0$ but does not meet $V_1$. Thus $V_{R_b}'=V_{R_b}$. This means that $\tilde{R}_b$ lies below $\tilde{R}_a$ in $\tilde{M}_1$. Now isotope $R_a$ and $R_b$ near the boundary to form $R_a^*$ and $R_b^*$. Then it is clear that $\partial R_a^*=\partial R_b^*=\lambda_1^*$ and $R_b^*<_{\partial} R_a^*$. In addition, they continue to realise their adjacency and are $\partial$--almost disjoint.
\end{proof}

\begin{remark}\label{pushdownremark}
Suppose $R_a$ and $R_b$ are positioned as required, and that $R_b\leq R_a$. Suppose a component of $R_b$ coincides with a component of $R_a$. By Lemma \ref{makedisjointlemma} there is a unique direction we can push the component of $R_b$ off that of $R_a$ so that they continue to realise their adjacency. By examining the construction above we see that this direction is downwards. To see this, note that the components must coincide when we lift them to the infinite cyclic cover, as the boundaries of the lifts coincide.
\end{remark}

\section{Mapping the vertices}\label{verticessection}

\begin{definition}\label{psidefn}
Define a map $\Psi\colon\V(\ms(L_1))\times\V(\ms(L_2))\times\mathbb{Z}\to\V(\ms(L))$ as follows. Let $(R_1,R_2,n)\in\V(\ms(L_1))\times\V(\ms(L_2))\times\mathbb{Z}$. Take the copy of $R_1^*$ in $M_1$, and the copy of $R_2^*$ in $M_2$. Join these together by a rectangle in $M_0$ that winds $n$ times around $L$. Here we measure the winding number of the rectangle around the arc of $L$ that runs through $M_0$ and is oriented from $L_1$ to $L_2$, with respect to where $\lambda^*$ lies on the boundary of the neighbourhood of this arc. Let $R$ be the resulting surface. Note that $R$ is a taut Seifert surface for $L$. If $n\neq 0$ then $\partial R\neq\lambda^*$, but $[\partial R]=[\lambda^*]$. We set $\Psi(R_1,R_2,n)$ to be the isotopy class of $R$.
\end{definition}

\begin{lemma}\label{surjectivelemma}
$\Psi$ is surjective.
\end{lemma}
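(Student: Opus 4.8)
The plan is to start with an arbitrary taut Seifert surface $R$ for $L$ and isotope it into a position from which the decomposition along $T_0$ can simply be read off. First I would make $R$ transverse to $T_0$ and arrange $\partial R$ to meet each meridian of $L$ in a single point; among all such positions I choose one minimising the number of components of $R\cap T_0$. Writing $A=T_0\cap M$ for the annulus cut from $T_0$ by $\nhd(L)$, the intersection $R\cap T_0$ then consists of arcs and circles properly embedded in $A$. Since the only component of $L$ meeting $T_0$ is the summing component, its two meridians are exactly the two components of $\partial A$, and $\partial R$ meets each of them once. Hence the endpoints of the arcs of $R\cap A$ are just these two points, one on each component of $\partial A$, so there is a single arc $\sigma$ and it is a spanning arc of $A$.

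It then remains to remove the circles of $R\cap T_0$. An innermost circle on $T_0$ bounds a disk $D\subseteq T_0$ with interior disjoint from $R$. If $D$ misses $L$ its boundary is inessential in $A$; since $M$ is irreducible (as $L$ is non-split) and $R$ is incompressible, this circle also bounds a disk on $R$, and an isotopy of $R$ across the resulting ball removes it, reducing $|R\cap T_0|$ and contradicting minimality. I expect the genuinely delicate case — and the main obstacle — to be an innermost circle whose disk $D$ meets $L$ once, so that the circle is a meridian, essential both in $A$ and on $R$, and cannot be surgered away. Here I would use the once-punctured disk $D$, that is, the annulus running from the circle to a meridian on $\partial\nhd(L)$, to isotope $R$ across $D$ and push this circle off $T_0$; this again lowers $|R\cap T_0|$ and contradicts minimality. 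After this step $R\cap T_0=\sigma$, and using a product neighbourhood of $T_0$ I would make $R$ vertical near $T_0$, so that $R\cap M_0=\sigma\times[1,2]$, recording how many times this rectangle winds around the summing strand relative to $\lambda^*$ as an integer $n$.

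Cutting along $T_0$ now gives surfaces $R_1=R\cap M_1$ and $R_2=R\cap M_2$ with $\partial R_i=L_i$, so each $R_i$ is a Seifert surface for $L_i$ and $\chi(R)=\chi(R_1)+\chi(R_2)-1$. Tautness of the pieces follows from tautness of $R$: if, say, $R_1$ were not taut, I could glue a Seifert surface for $L_1$ of larger Euler characteristic to $R_2$ along the rectangle and obtain a Seifert surface for $L$ beating $R$, a contradiction. Thus $R_1$ and $R_2$ represent vertices of $\ms(L_1)$ and $\ms(L_2)$. Finally, each $R_i$ is isotopic in $M_i$ to the chosen representative $R_i^*$, and any discrepancy in the boundary framing introduced by this isotopy can be absorbed into the winding number of the connecting rectangle; so $R$ is isotopic to the surface built from $R_1^*$, $R_2^*$ and $n$ in Definition \ref{psidefn}. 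Hence $[R]=\Psi([R_1],[R_2],n)$, and $\Psi$ is surjective.

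Besides ruling out the meridian circles, the points that will need care are the bookkeeping for a possibly disconnected $R$ — components meeting link components other than the summing one lie entirely in one $M_i$ and can be kept off $T_0$, so they merely distribute themselves between $R_1$ and $R_2$ — and checking that the integer $n$ produced above matches the winding convention fixed in Definition \ref{psidefn}.
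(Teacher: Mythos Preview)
Your proof is correct and follows essentially the same route as the paper's: isotope $R$ to minimal intersection with $T_0$, observe that $R\cap T_0$ is a single arc, split along $T_0$ into taut Seifert surfaces for $L_1$ and $L_2$, isotope these to the chosen representatives $R_i^*$, absorb the boundary movement into the rectangle, and read off the winding number. The paper's proof is a terse paragraph that asserts the single-arc conclusion without argument; you have supplied the details.

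One simplification: the ``delicate case'' you anticipate, an innermost circle that is a meridian of the summing strand, cannot occur. You have already shown that $R\cap A$ contains a spanning arc $\sigma$ of the annulus $A$. Any circle in $A$ that is essential (i.e.\ a core, hence a meridian) must intersect every spanning arc, but the components of the embedded $1$--manifold $R\cap A$ are pairwise disjoint. Hence every circle of $R\cap A$ is inessential in $A$ and is removed by your first innermost-disk argument; the meridian case and the somewhat vague isotopy you propose for it are never needed.
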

\begin{proof}
Let $R$ be a taut Seifert surface for $L$. Isotope $R$ to have minimal intersection with $T_0$, and so that $R\cap M_0=\rho\times[1,2]$ for some arc $\rho\subset T_0$. Then $T_0$ cuts $R$ into taut Seifert surfaces $R_i$ of $L_i$ for $i=1,2$. Isotope $R_i$ to $R_i^*$. This isotopy may move $\partial R_i$ in $M_i$. 
However, the isotopy of $M_i$ can be extended to an isotopy of $M$ by also isotoping the rectangle $R\cap M_0$ in $M_0$ near $T_0\times \{i\}$.
After these isotopies, we can read off the winding number $n$ of the rectangle. Then $\Psi(R_1,R_2,n)=R$.
\end{proof}

\pagebreak

\begin{lemma}\label{injectivelemma}
$\Psi$ is injective.
\end{lemma}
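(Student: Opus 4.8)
The plan is to reconstruct the triple $(R_1,R_2,n)$ from the ambient isotopy class of the surface it produces under Definition \ref{psidefn}. Suppose $\Psi(R_1,R_2,n)=\Psi(R_1',R_2',n')$, so that the two taut Seifert surfaces $R,R'$ for $L$ are ambient isotopic in $M$. By construction each of $R,R'$ meets $T_0$ in a single arc, has the product form $\rho\times\intvl$ over $M_0$, and is cut by $T_0$ into the fixed representatives $R_i^*$ (resp.\ $(R_i')^*$) in $M_1$ and $M_2$ chosen in Proposition \ref{representativesprop}. First I would put $R,R'$ into almost disjoint position; since they are isotopic, Proposition \ref{productregionsprop}(1) supplies a product region $N$ realising the isotopy from $R$ to $R'$.

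The next step is to recover $(R_1,R_2)$. The annulus $A=T_0\cap M$ is incompressible and not boundary-parallel in $M$, because $L_1,L_2$ are non-split and non-fibred and hence not unknots; it is the essential annulus coming from the connected-sum decomposition $M=M_1\cup M_0\cup M_2$. I would isotope $A$ into good position relative to $N$ and use Proposition \ref{surfaceinproductprop} to control the components of $A\cap\Int(N)$, so that the product isotopy can be taken to preserve $T_0$ and the decomposition. Restricting to $M_i\cong\Sphere\setminus\nhd(L_i)$ then exhibits $R_i^*$ and $(R_i')^*$ as isotopic taut Seifert surfaces for $L_i$, whence $R_i=R_i'$ as vertices of $\ms(L_i)$ for $i=1,2$; as these are the \emph{fixed} representatives of Proposition \ref{representativesprop}, they in fact coincide as surfaces. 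The boundary-link subtlety reappears here: when components of $R$ and $R'$ coincide, $N$ need not be embedded, and I would have to run the argument componentwise, as in Lemma \ref{almostdisjointlemma}, to track which pieces are identified.

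With $R_1^*=(R_1')^*$ and $R_2^*=(R_2')^*$ pinned down, the two surfaces differ only in how the connecting rectangle winds over $M_0$, and it remains to show $n=n'$. This is where the $\mathbb{R}$-factor of Theorem \ref{connectsumthm} is genuinely witnessed, and I expect it to be the main obstacle: one must show that spinning the rectangle around the arc of $L$ through $M_0$ is non-trivial on ambient isotopy classes. I would detect $n$ in the infinite cyclic cover $\tilde{M}$, where a single unit of winding around $K$ corresponds to the deck transformation $\tau$ (the relevant meridian has linking number $1$). If $n\neq n'$, the two normalised surfaces lift to $\tilde{R}$ and a nontrivial translate, forcing $\tilde{R}$ to be isotopic to $\tau^{k}(\tilde{R})$ with $k=n-n'\neq0$; by Proposition \ref{productregionsprop} this produces a product region between $\tilde{R}$ and $\tau^{k}(\tilde{R})$, contradicting that $L$ is not fibred, exactly as in the proof of Corollary \ref{isotopyrelbdycor}. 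Hence $n=n'$, and $\Psi$ is injective.
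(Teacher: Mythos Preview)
Your plan for recovering $(R_1,R_2)$ is essentially the paper's: once $R,R'$ are in good position, a product region $N$ between them is split by $T_0\cap M$ (which meets $N$ in a product disc, since $N\cap M_0$ is a product over the rectangle) into product regions in $M_1$ and $M_2$, and the isotopy decomposes accordingly.

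The gap is in your recovery of $n$. The surfaces $R$ and $R'$ are \emph{not} lifts of one another, nor are their lifts $\tau$--translates of each other: they agree on $M_1\cup M_2$ but carry genuinely different rectangles in $M_0$. If you choose lifts $\tilde R,\tilde R'$ that agree over $\tilde M_1$, then over $\tilde M_2$ they differ by $\tau^{k}$ with $k=n-n'$; so $\tilde R'$ is a ``mixed'' object, not a translate of $\tilde R$. The ambient isotopy from $R$ to $R'$ lifts to an isotopy carrying $\tilde R$ to \emph{some} lift $\tau^{m}(\tilde R')$, but you have no control over $m$, and knowing it does not exhibit $\tilde R$ as isotopic to a translate of itself. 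The analogy with Corollary~\ref{isotopyrelbdycor} breaks down precisely because that argument uses an isotopy \emph{rel boundary}, whereas here $\partial R\neq\partial R'$. Even if you did obtain $\tilde R\simeq\tau^{k}(\tilde R)$, for $|k|\geq 2$ you would still need Proposition~\ref{surfaceinproductprop} to trap $\tau(\tilde R)$ inside the product region before concluding fibredness; and the contradiction is with $L$ fibred rather than $L_i$ fibred, which requires the (true but unstated) fact that $L_1\#L_2$ is fibred only if both summands are.

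The paper sidesteps the cover entirely for this step. It reduces to the connected components meeting $K_1\#K_2$, pushes the $M_1$--piece of $R_a$ above $R_0$ and the $M_2$--piece below, and then analyses the boundary curves of the two resulting connected surfaces on the torus $\partial\mathcal{N}(K_1\#K_2)$: these form a spiral pattern with $n_a$ turns (Figure~\ref{verticespic1}). Any product region $N$ between them must lie on one side of each surface, and the pattern forces $N$ to fill the entire complement of $R_{a,i}$ in $M_i$ for some $i$, with $T_0\times\{i\}$ sitting in $N$ as a product disc. Cutting along it shows $M_i\setminus R_{a,i}$ is a product, so $L_i$ is fibred---contradiction.
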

\begin{proof}
Suppose $\Psi(R_{a,1},R_{a,2},n_a)=\Psi(R_{b,1},R_{b,2},n_b)$. We first aim to show that $R_{a,1}=R_{b,1}$ and $R_{a,2}=R_{b,2}$. 
Let $R_a,R_b$ be the fixed surfaces constructed by $\Psi$. Note that, by construction, $R_a$ and $R_b$ each meet $T_0$ in a single arc, and these arcs either coincide or are disjoint. We can ensure that the arcs are disjoint using an isotopy of $R_a$ that breaks into an isotopy of $R_{a,1}$ and an isotopy of $R_{a,2}$. By an isotopy of $R_a$ keeping $T_0$ fixed pointwise, $R_a$ and $R_b$ can be made transverse. 

As the surfaces are isotopic, there is a product region $N$ between them. 
If $\Int(N)$ meets $T_0$ then $T_0\cap N$ is a product disc in the sutured manifold $N$, and divides $N$ into product regions $N_1,N_2$. 
The isotopy of $R_a$ defined by $N$ therefore breaks into an isotopy of $R_{a,1}$ and an isotopy of $R_{a,2}$.
It either reduces $|R_a\cap R_b|$ or makes components of $R_a$ and $R_b$ coincide.
Inductively we find that $R_{a,1}=R_{b,1}$ and $R_{a,2}=R_{b,2}$.

It remains to show that $\Psi(R_{a,1},R_{a,2},n_a)\neq\Psi(R_{a,1},R_{a,2},n_b)$ for $n_a> n_b$. Suppose otherwise. Without loss of generality, $n_b=0$. Consider the fixed surfaces $R_a=\Psi(R_{a,1},R_{a,2},n_a),R_0=\Psi(R_{a,1},R_{a,2},0)$. Push the component of the copy of $R_{a,1}$ in $R_a$  that meets $K_1$ upwards off $R_0$, and the copy of $R_{a,2}$ downwards. Delete all other components of each surface. By the assumption that $\Psi(R_{a,1},R_{a,2},n_a)=\Psi(R_{a,1},R_{a,2},0)$, the two remaining surfaces are isotopic, so there is a product region $N$ between them. Note that $N$ meets $\partial\!\nhd(K_1\# K_2)$. By considering the boundary curves of the surfaces on $\partial\!\nhd(K_1\# K_2)$, we can restrict the possibilities for the location of $N$ relative to $R_a,R_0$. To see this, note that $N$ only meets one side of each of the orientable surfaces $R_a,R_0$. Figure \ref{verticespic1} shows the boundary patterns in the cases $n_a\in\{1,2,3,4\}$.
\begin{figure}[htbp]
\centering
\input{pictexfiles/verticespic1}
\caption{\label{verticespic1}}
\end{figure}
In general we see that $N$ must meet the complement of one of the $L_i$ in the complement of the surface $R_{a,i}$. 
Suppose this is $L_1$. Then $T_0\times\{1\}$ is a product disc in $N$. This means $\Sphere\setminus\nhd(R_{a,1})$ is a product region, showing that $L_1$ is fibred, which is a contradiction.
\end{proof}

\section{Mapping the edges}\label{edgessection}

\begin{lemma}\label{threearcslemma}
Let $R_{a,i},R_{b,i}$ be fixed almost disjoint taut Seifert surfaces that demonstrate that their isotopy classes are adjacent in $\ms(L_i)$ for $i=1,2$. Suppose that there are arcs $\rho_j\subset T_0$ for $j=a,b$ such that $R_{j,i}\cap(T_0\times\{i\})=\rho_j\times\{i\}$ for $i=1,2$. Suppose further that $\rho_a$ and $\rho_b$ are disjoint.
Let $R_j=R_{j,1}\cup R_{j,2}\cup(\rho_j\times[1,2])$ for $j=a,b$.
Then $R_a,R_b$ demonstrate that their isotopy classes are adjacent in $\ms(L)$.
\end{lemma}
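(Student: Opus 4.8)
The plan is to verify the intersection-number criterion of Lemma \ref{arcintersectionlemma} for the pair $(R_a,R_b)$, with $R_a$ playing the role of $R_1$ and $R_b$ that of $R_2$. First I would record the easy structural facts. The surfaces $R_a,R_b$ are taut Seifert surfaces for $L$, built exactly as in Definition \ref{psidefn}, and they are almost disjoint: the rectangles $\rho_a\times[1,2]$ and $\rho_b\times[1,2]$ are disjoint since $\rho_a,\rho_b$ are, so inside $M_0$ the surfaces meet only in these disjoint rectangles, while inside each $M_i$ they meet in $R_{a,i}\cap R_{b,i}$, which is almost disjoint by hypothesis. Moreover the component of $R_{a,i}$ meeting $K_i$ cannot coincide with that of $R_{b,i}$, because their traces on $T_0\times\{i\}$ are the distinct arcs $\rho_a\times\{i\}$ and $\rho_b\times\{i\}$; hence distinct components of $R_a$ and $R_b$ are genuinely disjoint.

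The central tool is that the algebraic intersection number of a path with $R_b$ depends only on its endpoints, once the path is required to avoid $R_a$. Indeed $R_a$ and $R_b$ represent the same class in $H_2(M,\partial M)$, dual to the total linking number $\lk$, so any loop disjoint from $R_a$ has algebraic intersection $0$ with both $R_a$ and $R_b$; two paths with common endpoints, each disjoint from $R_a$, therefore meet $R_b$ equally often algebraically. Passing to the infinite cyclic cover $\tilde M$ of $M$ associated to $\lk$, I can assign to each point $q$ of $R_a\setminus R_b$ well-defined heights $h^{+}(q)$ and $h^{-}(q)$, namely the indices of the lifts of $M\setminus R_b$ containing the lifts of $(q,1)$ and of $(q,-1)$, after synchronising lifts along paths missing $R_a$. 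The criterion to be proved is exactly that $h^{-}(y)-h^{+}(x)=1$ for all $x,y\in R_a\setminus R_b$; equivalently, that $h^{+}$ is constant on the whole positive side of $R_a$, that $h^{-}$ is constant on the whole negative side, and that the two constants differ by $1$.

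To establish this I would feed in the two given adjacencies. Since $R_{a,i}$ and $R_{b,i}$ demonstrate adjacency in $\ms(L_i)$, Lemma \ref{arcintersectionlemma} applies inside $M_i$: for every pair of points of $R_{a,i}\setminus R_{b,i}$, a path in $M_i$ avoiding $R_{a,i}$ meets $R_{b,i}$ exactly once. As a path in the interior of $M_i$ avoiding $R_{a,i}$ automatically avoids all of $R_a$ and meets $R_b$ only in $R_{b,i}$, this says precisely that $h^{+}$ is constant, say $h^{+}\equiv A_i$, and $h^{-}\equiv A_i+1$, on $R_{a,i}\setminus R_{b,i}$ for $i=1,2$. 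It remains to glue the two values, and here the rectangle does the work: $\rho_a\times[1,2]$ is a connected piece of $R_a$ disjoint from $R_b$ that joins $R_{a,1}$ along $\rho_a\times\{1\}$ to $R_{a,2}$ along $\rho_a\times\{2\}$. A path along its positive side $(\rho_a\times[1,2])\times\{1\}$ crosses $R_b$ not at all, so $h^{+}$ agrees at the two ends; since $\rho_a\times\{i\}$ lies in $R_{a,i}\setminus R_{b,i}$, this forces $A_1=A_2$. Hence $h^{+}$ and $h^{-}$ are globally constant with difference $1$ over all of $R_a\setminus R_b$, as required. (Concretely, for $x\in R_{a,1}$ and $y\in R_{a,2}$ one concatenates a path along the positive side of $R_{a,1}$, a path along the positive side of the rectangle, and a path realising the $M_2$ criterion, obtaining $0+0+1=1$.)

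Finally, adjacency also demands that $R_a$ and $R_b$ be distinct vertices. If they were isotopic, a product region between them (Proposition \ref{productregionsprop}) would, after arranging each to meet $T_0$ in a single arc, be cut by $T_0$ into product regions in $M_1$ and $M_2$, forcing $R_{a,i}\simeq R_{b,i}$ and contradicting their adjacency; this is the cutting argument already used in Lemma \ref{injectivelemma}. I expect the main obstacle to be the bookkeeping around heights rather than any single hard idea. One must check that the $M$-cover heights restrict correctly to the $M_i$-cover heights (which rests on the total linking number of $L$ restricting to that of $L_i$ on each $M_i$), that $R_a\setminus R_b$ may be disconnected so that the required constancy genuinely needs the full strength of the $M_i$ criteria and not mere connectivity, and that the orientation of $R_a$ makes the positive side of the rectangle continue the positive sides of $R_{a,1}$ and $R_{a,2}$. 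The coinciding-component (boundary link) cases must also be verified not to disturb the disjointness of the rectangles from the opposite surface.
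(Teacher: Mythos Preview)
Your proposal is correct and hits the same target as the paper: both verify the arc-intersection criterion of Lemma~\ref{arcintersectionlemma} for $(R_a,R_b)$ by feeding in the two given adjacencies in $M_1$ and $M_2$. The organisational difference is that the paper takes an arbitrary path $\rho$ from $(x,1)$ to $(y,-1)$, makes it transverse to $T_0$, and then inductively eliminates intersection points with $T_0$ by splicing in an auxiliary arc $\rho_0\subset T_0$ running from the positive side of $R_a$ to the negative side; this splits $\rho$ into three subpaths of the required form, two of which lie on one side of $T_0$ and cancel. You instead invoke homological path-independence up front (loops missing $R_a$ have linking number~$0$, hence intersection~$0$ with $R_b$) and then exhibit one good path through the rectangle. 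Both arguments buy the same reduction to the $M_i$ criterion; yours is a little more conceptual, the paper's a little more hands-on, and neither needs anything the other lacks. Your extra check that $R_a\not\simeq R_b$ via the cutting argument of Lemma~\ref{injectivelemma} is appropriate and the paper leaves it implicit.
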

\begin{proof}
The pairs $R_{a,i},R_{b,i}$ satisfy the condition given in Lemma \ref{arcintersectionlemma}. We wish to show that $R_a,R_b$ also satisfy this condition. First note that $R_a$ and $R_b$ are almost disjoint.

Choose points $x,y\in R_a\setminus R_b$, an appropriate product neighbourhood of $R_a$ and a path $\rho$ from $(x,1)$ to $(y,-1)$ that is disjoint from $R_a$ and transverse to $R_b$.
By a small isotopy, $\rho$ also can be made transverse to $T_0$. If $\rho$ is disjoint from $T_0$, it has algebraic intersection 1 with $R_b$, as required.

Suppose otherwise. Let $x_0$ be the first point at which $\rho$ meets $T_0$. Find an arc $\rho_0$ from $R_a\times\{1\}$ to $R_a\times\{-1\}$ such that $\rho_0$ lies entirely in $T_0$ and passes through $x_0$. We can then use $\rho_0$ to split $\rho$ into three paths as in the condition in Lemma \ref{arcintersectionlemma}, each of which (up to isotopy) has strictly fewer points of intersection with $T_0$. The first path runs along $\rho$ as far as $x_0$, and then follows $\rho_0$ up to $R_a\times\{-1\}$. The second is $\rho_0$ traversed backwards. The third runs along $\rho_0$ to $x_0$, and then runs along the rest of $\rho$. The first two paths can be made disjoint from $T_0$, and run in opposite directions. Hence removing them does not change the algebraic intersection number with $R_b$. In this way we can remove all points of $\rho\cap T_0$.
\end{proof}

\begin{corollary}\label{edgescor}
Let $(R_{a,1},R_{a,2},n_a)$ and $(R_{b,1},R_{b,2},n_b)$ be in $\V(\ms(L_1))\times\V(\ms(L_2))\times\mathbb{Z}$ with $R_{b,2}\leq R_{a,2}$. Suppose these triples are distinct and one of the following conditions holds.
\begin{itemize}
	\item $R_{a,1}\geq R_{b,1}$ and $n_a=n_b$.
	\item $R_{a,1}\leq R_{b,1}$ and $n_a=n_b-1$.
\end{itemize}
Then $\Psi(R_{a,1},R_{a,2},n_a)$ and $\Psi(R_{b,1},R_{b,2},n_b)$ are adjacent in $\ms(L)$.
\end{corollary}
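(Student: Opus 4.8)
The plan is to build explicit almost disjoint representatives of the two surfaces and to verify the adjacency criterion of Lemma \ref{arcintersectionlemma}. Write $R_a=\Psi(R_{a,1},R_{a,2},n_a)$ and $R_b=\Psi(R_{b,1},R_{b,2},n_b)$. Since the triples are distinct, Lemma \ref{injectivelemma} gives that $R_a$ and $R_b$ are not isotopic, so their distance is at least $1$ and it suffices to exhibit representatives realising distance at most $1$. For the pieces in $M_1$ and $M_2$ I would invoke Proposition \ref{representativesprop}: using $R_{b,2}\leq R_{a,2}$ and, according to the case, either $R_{b,1}\leq R_{a,1}$ or $R_{a,1}\leq R_{b,1}$, choose $\partial$--almost disjoint representatives of the $R_{j,i}$ that demonstrate adjacency in $\ms(L_i)$ and are correctly stacked near $K_i$, then apply Lemma \ref{makedisjointlemma} to make each pair genuinely almost disjoint while still realising their adjacency, with the push--off directions determined by the ordering. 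Finally I would reconstruct $R_a,R_b$ by gluing these pieces across $M_0$ by rectangles winding $n_a$ and $n_b$ times, as in Definition \ref{psidefn}.

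In the first case ($R_{a,1}\geq R_{b,1}$ and $n_a=n_b$) both factors are stacked in the same sense and the two rectangles have equal winding. After a boundary isotopy I can arrange that each surface meets $T_0\times\{i\}$ in a single arc and that the resulting arcs $\rho_a,\rho_b\subset T_0$ are disjoint, since two arcs of equal winding joining the two boundary circles of the annulus $T_0\cap M$ may be taken parallel. The rectangles can then be chosen to be $\rho_a\times[1,2]$ and $\rho_b\times[1,2]$, so the hypotheses of Lemma \ref{threearcslemma} are met and adjacency of $R_a$ and $R_b$ follows at once.

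The second case ($R_{a,1}\leq R_{b,1}$ and $n_a=n_b-1$) is where the real work lies, because the two factors are now stacked in opposite senses while the rectangles differ in winding by one, so with the natural gluing the rectangles cross in $M_0$ and Lemma \ref{threearcslemma} cannot be applied directly. Here I would verify Lemma \ref{arcintersectionlemma} by hand. Given $x,y\in R_a\setminus R_b$ and an admissible path $\rho$ from $(x,1)$ to $(y,-1)$, make $\rho$ transverse to $T_0$ and, exactly as in the proof of Lemma \ref{threearcslemma}, cut it at $T_0$ into pieces lying in $M_1$, in $M_0$ and in $M_2$, inserting cancelling pairs of arcs in $T_0$ so that each piece becomes an admissible test path for the corresponding factor pair $(R_{a,i},R_{b,i})$ or for the pair of rectangles. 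The contributions from $M_1$ and $M_2$ are each computed from the adjacency of the factor surfaces, while the $M_0$ contribution records the single extra crossing forced by the winding difference $n_b-n_a=1$; the reversed stacking in the first factor is precisely what makes these contributions combine to the required algebraic intersection number $1$.

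The step I expect to be the main obstacle is the bookkeeping in this second case: one must fix the correct push--off direction for every component of $R_a$ that coincides with a component of $R_b$, using Lemma \ref{makedisjointlemma} and Remark \ref{pushdownremark} to see that the orderings and winding force these directions, and then check that the signed count over the three regions is $+1$ for every admissible $\rho$, independently of how $\rho$ threads $M_0$. The boundary--link situation, in which entire components of $R_a$ and $R_b$ coincide, is the delicate sub-case and is the source of the extra detail flagged in Section \ref{msdefnsection}; handling it requires tracking the coinciding components through the connected sum and appealing to the fact that Lemma \ref{makedisjointlemma} determines the side on which each product region lies.
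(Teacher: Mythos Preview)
Your Case~1 matches the paper. The divergence is in Case~2, where you assert that the rectangles must cross in $M_0$ and therefore Lemma~\ref{threearcslemma} cannot be used. This is the key misconception: in the paper's argument Lemma~\ref{threearcslemma} is applied in \emph{both} cases. The point is that, since $R_{a,1}\leq R_{b,1}$, Proposition~\ref{representativesprop} lets you push $R_{b,1}$ \emph{upwards} off $R_{a,1}^*$ near $K_1$, while $R_{b,2}$ is pushed \emph{downwards} off $R_{a,2}^*$ near $K_2$. A rectangle for $R_b$ that wraps once around $K_1\#K_2$ in the positive direction then runs from above $R_{a,1}^*$ to below $R_{a,2}^*$, and its trace $\rho_b$ on $T_0$ sits on the opposite side of $\rho_a$ from Case~1 but is still disjoint from it. So the hypotheses of Lemma~\ref{threearcslemma} are met and adjacency follows immediately; the winding difference $n_b-n_a=1$ is exactly what the once-around rectangle contributes.

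Your alternative route also has a formal gap: Lemma~\ref{arcintersectionlemma} is stated for \emph{almost disjoint} surfaces, so you cannot invoke it for representatives whose rectangles genuinely cross in $M_0$. To make your direct computation rigorous you would first have to isotope the surfaces to be almost disjoint, and the natural way to do that is precisely the wrap-around rectangle above, at which point Lemma~\ref{threearcslemma} already finishes the job without any path-splitting bookkeeping.
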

\begin{proof}
Viewing $\Psi(R_{a,1},R_{a,2},n_a)^*$ as fixed, we build a copy of $\Psi(R_{b,1},R_{b,2},n_b)$ satisfying the hypotheses of Lemma \ref{threearcslemma}. 
Without loss of generality, $n_a=0$.

Isotope $R_{b,2}^*$ as in Proposition \ref{representativesprop} to realise its adjacency with $R_{a,2}^*$. 
For any components of $R_{b,2}$ that do not coincide with those of $R_{a,2}^*$, perform a small isotopy near the boundary to move the boundary of $R_{b,2}$ to below that of $R_{a,2}^*$, making the components disjoint. 
Consider the pairs of components that meet $K_2$. 
If these components of $R_{a,2}^*,R_{b,2}$ coincide then push that in $R_{b,2}$ downwards off $R_{a,2}^*$. 
By Remark \ref{pushdownremark}, $R_{a,2}$ and $R_{b,2}$ still realise their adjacency. 

Case 1: $R_{a,1}\geq R_{b,1}$ and $n_b=n_a=0$. Then $R_{b,1}\leq_{\partial}R_{a,1}$. 
Perform isotopies on $R_{b,1}^*$ analogous to those performed on $R_{b,2}^*$.
A flat rectangle can then be inserted connecting the boundaries of the surfaces $R_{b,1},R_{b,2}$.

Case 2: $R_{a,1}\leq R_{b,1}$ and $n_b=n_a+1=1$. Then $R_{a,1}\leq_{\partial}R_{b,1}$. This time isotope $R_{b,1}^*$ upwards instead of downwards, so the boundary of $R_{b,1}$ lies above that of $R_{a,1}^*$ wherever they do not coincide. 
Add in a rectangle that wraps nearly once around $K_1\# K_2$, to again join up the boundaries of  $R_{b,1},R_{b,2}$. 

It is clear that, in either case, $\Psi(R_{b,1},R_{b,2},n_b)^*$ is isotopic to the surface $R_b$ we have constructed. We may now apply Lemma \ref{threearcslemma} to complete the proof. 
\end{proof}

\begin{proposition}\label{mainedgesprop}
Let $(R_{a,1},R_{a,2},n_a)$ and $(R_{b,1},R_{b,2},n_b)$ be in $\V(\ms(L_1))\times\V(\ms(L_2))\times\mathbb{Z}$. Suppose $\Psi(R_{a,1},R_{a,2},n_a)$ and $\Psi(R_{b,1},R_{b,2},n_b)$ are adjacent in $\ms(L)$. Then one of the conditions in Corollary \ref{edgescor} holds.
\end{proposition}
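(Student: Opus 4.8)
The plan is to reverse the construction used in Corollary~\ref{edgescor}: starting from an adjacency in $\ms(L)$, I would cut the two surfaces along $T_0$ and read off the data of the two triples. First, let $R_a,R_b$ be fixed representatives of $\Psi(R_{a,1},R_{a,2},n_a)$ and $\Psi(R_{b,1},R_{b,2},n_b)$. By Lemma~\ref{makedisjointlemma} I would isotope them to be disjoint while realising their adjacency. I then want them to meet $T_0$ nicely: arguing as in Lemma~\ref{injectivelemma}, an innermost product-disc of the form $T_0\cap N$ lets me reduce $|R_j\cap T_0|$ and arrange that each $R_j$ meets $T_0$ in a single arc $\rho_j$, with $\rho_a,\rho_b$ disjoint, all without destroying the disjointness of $R_a,R_b$. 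Cutting along $T_0$ is then the inverse of $\Psi$ (cf. the construction in Lemma~\ref{surjectivelemma}), so the pieces recover $R_{a,i},R_{b,i}$ and the winding numbers $n_a,n_b$ up to the canonical identifications. Since $\Psi$ is injective and $R_a\neq R_b$, the two triples are distinct, as required by Corollary~\ref{edgescor}.

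The second step is to show that the pieces are comparable in each $\ms(L_i)$. Here I would restrict the arc criterion of Lemma~\ref{arcintersectionlemma} to $M_i$. Since $R_a\cap M_i=R_{a,i}$ and $R_b\cap M_i=R_{b,i}$, any path in $M_i$ from $(x,1)$ to $(y,-1)$ that is disjoint from $R_{a,i}$ is automatically disjoint from $R_a$ and meets $R_b$ only along $R_{b,i}$; the global condition, which holds because $R_a,R_b$ realise their adjacency, then forces algebraic intersection $1$ with $R_{b,i}$. Thus $\dist_{\ms(L_i)}(R_{a,i},R_{b,i})\le 1$, so $R_{a,i}$ and $R_{b,i}$ are adjacent or equal and the orderings $\le$ apply to them. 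Using Lemma~\ref{orderinglemma1} I may relabel $a$ and $b$ so that $R_{b,2}\le R_{a,2}$, matching the hypothesis of Corollary~\ref{edgescor}.

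The final and decisive step is the winding count in $M_0$. Near $K_1$ and near $K_2$ the disjoint surfaces $R_a,R_b$ have a definite vertical order (or coincide), and via Proposition~\ref{representativesprop} and Remark~\ref{pushdownremark} this order records exactly whether $R_{b,i}\le R_{a,i}$ or $R_{a,i}\le R_{b,i}$ in $\ms(L_i)$. The two rectangles $\rho_a\times[1,2]$ and $\rho_b\times[1,2]$ join the order at the $M_1$ end to the order at the $M_2$ end while winding $n_a$ and $n_b$ times around the connected-sum arc. Lifting the annular cross-section of $M_0$ to its universal cover and tracking the two arcs, disjointness forces a dichotomy: if the vertical orders at the two ends agree then the rectangles are parallel and $n_a=n_b$, giving the first bullet of Corollary~\ref{edgescor}; if the orders disagree, so $R_{a,1}\le R_{b,1}$, then the windings must differ by exactly one, and the position of $\lambda^*$ together with the orientation conventions pins the sign down to $n_a=n_b-1$, giving the second bullet.

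I expect the winding count to be the main obstacle, for two reasons. First, I must show that disjoint embedded rectangles can differ in winding number by at most one and determine the sign precisely; this is careful but forced orientation bookkeeping, matching the two cases against the direction of $\tau$, the positive side of the surfaces, and the fixed position of $\lambda^*$. Second, as flagged throughout the paper, boundary-link behaviour---where components of $R_{a,i}$ and $R_{b,i}$, or merely their boundaries, coincide---requires the argument to be run in the $\partial$--almost disjoint setting, with the push-off direction of coinciding components governed by Remark~\ref{pushdownremark}; keeping the vertical-order reading consistent in that degenerate situation is the fiddly point.
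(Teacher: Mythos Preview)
Your outline matches the paper's proof in its three stages: put the two surfaces in standard position relative to $T_0$, cut to see that the $i$-pieces are at distance at most $1$ in each $\ms(L_i)$, and then read off the winding number from the relative position of the two rectangles in $M_0$. The paper fixes $R_a$ as the canonical surface $\Psi(R_{a,1},R_{a,2},n_a)^*$ throughout and only isotopes the $b$-surface; you should do the same, as it makes the meaning of ``vertical order near $K_i$'' well defined.

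The one genuine gap is in your third step. You assert that the vertical order of the cut pieces near $K_i$ records the ordering $\le$ on $\ms(L_i)$ ``via Proposition~\ref{representativesprop} and Remark~\ref{pushdownremark}'', but those statements compare the \emph{canonical} representatives $R_{a,i}^*$ and $R_{b,i}^*$. After cutting you have $R_{a,i}^*$ on one side, but on the other side only an ad hoc piece $R_{b,i}'$ whose boundary position is a priori unrelated to $\le$. The paper's proof spends most of its effort on exactly this bridge: it positions $R_{b,i}^*$ as in Proposition~\ref{representativesprop}, and then component by component compares $R_{b,i}'$ to $R_{b,i}^*$ via product regions (using Lemma~\ref{almostdisjointlemma} and Corollary~\ref{isotopyrelbdycor}) to conclude that $\partial R_{b,i}'$ lies on the correct side of $\partial R_{a,i}^*$. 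The case analysis on whether the components meeting $K_i$ coincide is not merely a boundary-link degeneracy; even in the generic case you cannot read $\le$ directly off $R_{b,i}'$ without this comparison to $R_{b,i}^*$. Once that bridge is in place, your ``disjoint rectangles in the annulus'' argument is exactly how the paper finishes each case.
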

\begin{proof}
Without loss of generality, $n_a=0$.
Fix $\Psi(R_{a,1},R_{a,2},n_a)^*$ and isotope $\Psi(R_{b,1},R_{b,2},n_b)$ to be disjoint from it, realising the adjacency in $\ms(L)$. Since $T_0\cap (M_0\setminus \Psi(R_{a,1},R_{a,2},n_a)^*)$ is a disc, by standard methods we can also ensure that this copy of $\Psi(R_{b,1},R_{b,2},n_b)$ meets $T_0$ in a single arc.
As in the proof of Lemma \ref{surjectivelemma}, dividing the surface along $T_0$ gives (fixed) Seifert surfaces $R_{b,i}'$ in $\ms(L_i)$ for $i=1,2$ and there is an integer $n_b'$ such that $\Psi(R_{b,1}',R_{b,2}',n_b')$ is isotopic to $\Psi(R_{b,1},R_{b,2},n_b)$. See Figure \ref{edgespic2}. By Lemma \ref{injectivelemma} we have in particular that $R_{b,i}'$ is isotopic to $R_{b,i}$ in $M_i$ for $i=1,2$. From this we see that $R_{a,i}^*,R_{b,i}'$ demonstrate that $R_{a,i},R_{b,i}$ are at most distance 1 apart in $\ms(L_i)$. 
We may now assume that $R_{b,2}\leq R_{a,2}$. 

It now remains to verify that $n_b$ takes the required value.
Our approach is to position $\Psi(R_{b,1},R_{b,2},n_b)$ `close to' $R_{b,i}^*$ for $i=1,2$ without affecting the relative positions of $\Psi(R_{b,1},R_{b,2},n_b)$ and $\Psi(R_{a,1},R_{a,2},n_a)^*$. Having done so, we will be able to read off the value of $n_b$ from $\Psi(R_{b,1},R_{b,2},n_b)$.

For $i=1,2$, consider $R_{a,i}^*$ and $R_{b,i}^*$, which satisfy the conclusions of Proposition \ref{representativesprop}. As an isotopy of $R_{b,i}^*$ that fixes its boundary will not affect the winding number $n_b$, we may assume that no such isotopy is needed in this case. That is, we assume that $R_{a,i}^*$ and $R_{b,i}^*$ are $\partial$--almost disjoint and realise their adjacency. We further assume that components of $R_{a,i}^*$ and $R_{b,i}^*$ coincide whenever this is possible without moving the boundary of either surface.

Suppose that a component of $R_{b,i}^*$ and a component of $R_{a,i}^*$ bound a product region $N$ in $M_i$, but that any isotopy from one to the other moves the boundary of the surface. Since the boundaries of $R_{a,i}^*$ and $R_{b,i}^*$ coincide, the component of $R_{b,i}^*$ is given by taking a parallel copy of that of $R_{a,i}^*$ and moving its boundary once around $L_i$, as shown in Figure \ref{edgespic1}. 
\begin{figure}[htbp]
\centering
\psset{xunit=.80pt,yunit=.80pt,runit=.80pt}
\begin{pspicture}(280,125)
{
\newgray{lightgrey}{.8}
\newgray{lightishgrey}{.7}
\newgray{grey}{.6}
\newgray{midgrey}{.4}
\newgray{darkgrey}{.3}
}
{
\pscustom[linestyle=none,fillstyle=solid,fillcolor=lightgrey]
{
\newpath
\moveto(35.822032,24.79851)
\curveto(27.896978,25.70035)(17.537284,32.05159)(13.057354,38.75491)
\curveto(10.126341,43.14058)(8.965365,47.34846)(8.925548,53.73038)
\curveto(8.865858,63.29744)(11.76956,70.64295)(18.118604,76.98612)
\curveto(24.054623,82.91665)(30.207347,85.66347)(39.469873,86.51816)
\curveto(51.97947,87.67248)(63.55047,82.7594)(67.37822,74.66821)
\curveto(68.38895,72.5317)(68.64742,71.21305)(68.62988,68.2826)
\curveto(68.60748,64.54631)(67.36727,59.75875)(66.33703,59.4318)
\curveto(66.03607,59.3363)(65.52351,60.14708)(65.19799,61.23356)
\curveto(63.25768,67.70972)(57.7888,73.1618)(51.1842,75.20429)
\curveto(47.82647,76.24268)(41.930718,76.21672)(38.688492,75.14929)
\curveto(19.160538,68.72003)(19.075937,41.42215)(38.563656,34.86939)
\curveto(41.676484,33.8227)(47.85643,33.78701)(51.1842,34.79649)
\curveto(58.57369,37.03815)(64.3072,43.49333)(65.73685,51.18087)
\lineto(66.30491,54.23546)
\lineto(140.14517,54.23546)
\lineto(213.98543,54.23546)
\lineto(214.22123,51.78143)
\curveto(214.81341,45.61862)(219.88116,38.85034)(226.11419,35.89772)
\curveto(229.30091,34.38815)(229.87433,34.28495)(235.07462,34.28495)
\curveto(240.24328,34.28495)(240.85925,34.39397)(243.91346,35.84937)
\curveto(255.8054,41.51619)(259.71685,56.24466)(252.22932,67.16256)
\curveto(245.37828,77.15235)(230.99668,79.01552)(221.40505,71.1559)
\curveto(218.88393,69.09003)(216.06129,64.96117)(215.03853,61.84317)
\curveto(214.63419,60.61047)(214.04704,59.51646)(213.73377,59.41203)
\curveto(212.97847,59.16026)(211.0835,65.36265)(211.0835,68.08659)
\curveto(211.0835,71.99331)(212.41593,75.19364)(215.37165,78.38617)
\curveto(221.16704,84.64589)(230.1498,87.52678)(240.7465,86.52423)
\curveto(250.6551,85.58679)(258.16019,81.78867)(264.34388,74.58228)
\curveto(268.02654,70.29055)(269.75752,66.94707)(270.87636,61.96444)
\curveto(273.0456,52.30398)(270.59059,42.15875)(264.53396,35.75476)
\curveto(261.29221,32.32708)(255.65867,28.46828)(251.47664,26.81089)
\curveto(240.91885,22.62672)(229.46601,24.85226)(218.27854,33.26203)
\lineto(215.23801,35.54764)
\lineto(139.88343,35.54764)
\lineto(64.52885,35.54764)
\lineto(62.42354,33.83979)
\curveto(57.77321,30.06739)(51.16618,26.68139)(46.1761,25.51323)
\curveto(43.619452,24.91473)(38.130374,24.53582)(35.822032,24.79851)
\lineto(35.822032,24.79851)
\closepath
}
}
{
\pscustom[linewidth=1.5,linecolor=black,fillstyle=solid,fillcolor=white]
{
\newpath
\moveto(65,54.99999738)
\curveto(65,43.95430239)(56.045695,34.99999738)(45,34.99999738)
\curveto(33.954305,34.99999738)(25,43.95430239)(25,54.99999738)
\curveto(25,66.04569238)(33.954305,74.99999738)(45,74.99999738)
\curveto(56.045695,74.99999738)(65,66.04569238)(65,54.99999738)
\closepath
\moveto(255,54.99999738)
\curveto(255,43.95430239)(246.045695,34.99999738)(235,34.99999738)
\curveto(223.954305,34.99999738)(215,43.95430239)(215,54.99999738)
\curveto(215,66.04569238)(223.954305,74.99999738)(235,74.99999738)
\curveto(246.045695,74.99999738)(255,66.04569238)(255,54.99999738)
\closepath
}
}
{
\pscustom[linewidth=1,linecolor=black]
{
\newpath
\moveto(65,55)
\lineto(215,55)
\moveto(65,55)
\curveto(85,85)(35,100)(15,75)
\curveto(6.176406,63.94811)(5,45)(15,35)
\curveto(40,10)(65,35)(65,35)
\lineto(215,35)
\curveto(215,35)(240,10)(265,35)
\curveto(275,45)(275,65)(265,75)
\curveto(245,100)(195,85)(215,55)
}
}
{
\pscustom[linewidth=1,linecolor=black,linestyle=dashed,dash=3 3]
{
\newpath
\moveto(25,55)
\curveto(25,55)(20,52.071068)(20,45)
\curveto(20,35)(31,29)(45,30)
\curveto(58.48998,30.96357)(65,40)(70,45)
\lineto(210,45)
\curveto(215,40)(221,31)(235,30)
\curveto(249,29)(260,35)(260,45)
\curveto(260,52)(255,55)(255,55)
}
}
{
\put(256,60){$N$}
\put(135,65){$R_{a,i}^*$}
\put(135,20){$R_{b,i}^*$}
\put(106,42){$R_{b,i}'$}
\put(35,95){$M_i$}
\put(30,50){$\nhd(L)$}
}
\end{pspicture}
\caption{\label{edgespic1}}
\end{figure}
Note that $N$ lies below $R_{a,i}^*$ if $R_{a,i}^*\leq_{\partial}R_{b,i}^*$ and $N$ lies above $R_{a,i}^*$ if $R_{b,i}^*\leq_{\partial}R_{a,i}^*$. In addition, a component of $R_{b,i}'$ must be contained in the product region $N$ and be isotopic to the component of $R_{a,i}^*$.
In particular, from Proposition \ref{representativesprop} we see that if $R_{a,i}^*\leq R_{b,i}^*$ then the component of $R_{b,i}'$ lies above that of $R_{b,i}^*$ and its boundary can be seen as lying above that of $R_{a,i}^*$,
whereas if $R_{b,i}\leq R_{a,i}$ then the component of $R_{b,i}'$ lies below that of $R_{b,i}^*$ and its boundary can be seen as lying below that of $R_{a,i}^*$.

For $i=1,2$, temporarily ignore the components of each surface that meet $K_i$. 
If a component of $R_{b,i}^*$ coincides with a component of $R_{a,i}^*$, then there is a product region between it and the corresponding component of $R_{b,i}'$.  
Such components will play no part in the rest of the proof, so we may assume that there are none.

For each other component of $R_{b,i}^*$, 
there are two possibilities. If it is isotopic to a component of $R_{a,i}^*$, we have already seen how it relates to $R_{b,i}'$.
Otherwise, take a copy of this component of $R_{b,i}$ that lies parallel to $R_{b,i}^*$ and is disjoint from $R_{a,i}^*$. That is, there is a product region between $R_{b,i}^*$ and this copy of $R_{b,i}$ that is contained within a product neighbourhood of $R_{b,i}^*$. Furthermore, this product region lies above $R_{b,i}^*$ if $R_{a,i}^*\leq_{\partial} R_{b,i}^*$ and lies below $R_{b,i}^*$ if $R_{b,i}^*\leq_{\partial} R_{a,i}^*$.
By Lemma \ref{almostdisjointlemma}, the corresponding component of $R_{b,i}'$ is now isotopic to that of $R_{b,i}$ by an isotopy disjoint from $R_{a,i}^*$. We may therefore assume that these components of $R_{b,i}^{},R_{b,i}'$ now coincide.
Hence for each component of $R_{b,i}'$ away from $K_i$ we have the following.
If $R_{a,i}\leq R_{b,i}$ then the boundary of $R_{b,i}'$ lies above that of $R_{a,i}^*$.
If $R_{b,i}\leq R_{a,i}$ then the boundary of $R_{b,i}'$ lies below that of $R_{a,i}^*$.
Recall that we have assumed that no such components exist if $R_{a,i}=R_{b,i}$, and that $R_{b,2}\leq R_{a,2}$.

We now turn our attention to the components that meet $K_i$. 

Case 1: This component of $R_{b,i}^*$ does not coincide with $R_{a,i}^*$ for $i=1,2$. Note that this means $R_{a,i}\neq R_{b,i}$, because $R_{a,i}^*$ and $R_{b,i}^*$ are not isotopic by an isotopy fixing their boundaries. In this case, the corresponding component of $R_{b,i}'$ can also be isotoped as just described. There is now an essentially unique way to join the two surfaces by a rectangle in $M_0$ that is disjoint from $\Psi(R_{a,1},R_{a,2},n_a)^*$. If $R_{b,1}\leq R_{a,1}$ then the rectangle must be flat, so $n_b=0=n_a$. If $R_{a,1}\leq R_{b,1}$ then the rectangle must twist nearly once around $\partial\!\nhd(K_1\# K_2)$ from above $R_{a,1}^*$ to below $R_{a,2}^*$ (that is, twisting in the positive direction around $K_1\# K_2$), so $n_b=1=n_a+1$. See Figure \ref{edgespic2}.
\begin{figure}[htbp]
\centering
\psset{xunit=.55pt,yunit=.55pt,runit=.55pt}
\begin{pspicture}(540,320)
{
\newgray{lightgrey}{.8}
\newgray{lightishgrey}{.7}
\newgray{grey}{.6}
\newgray{midgrey}{.4}
\newgray{darkgrey}{.3}
}
{
\pscustom[linestyle=none,fillstyle=solid,fillcolor=lightgrey]
{
\newpath
\moveto(406.87775,72.33093)
\curveto(395.3083,74.812)(383.34201,80.41203)(370.87148,89.18128)
\curveto(362.86992,94.80796)(342.99209,114.3318)(335.86349,123.56584)
\lineto(330.87297,130.0303)
\lineto(330.87297,144.6935)
\lineto(330.87297,159.3567)
\lineto(340.04121,159.3567)
\lineto(349.20945,159.3567)
\lineto(350.99584,154.30849)
\curveto(354.07235,145.61444)(361.23746,131.22243)(366.44996,123.26696)
\curveto(372.69315,113.7384)(384.3965,101.82879)(391.82535,97.44435)
\curveto(402.26509,91.28292)(413.3502,88.46653)(423.5606,89.38139)
\curveto(434.47576,90.3594)(440.54762,92.72332)(445.80014,98.03976)
\curveto(449.6034,101.8893)(452.45738,107.55303)(456.53595,119.34501)
\curveto(460.89043,131.93466)(464.88558,139.63487)(470.0075,145.30989)
\curveto(474.38388,150.15888)(481.37956,155.27549)(486.61939,157.45978)
\curveto(489.378,158.60973)(489.4241,158.60116)(489.43357,156.93612)
\curveto(489.43857,156.00509)(490.21533,153.57413)(491.15905,151.53397)
\lineto(492.87491,147.8246)
\lineto(489.09276,145.23906)
\curveto(475.95455,136.25759)(470.22995,128.08401)(458.77452,101.95072)
\curveto(450.56128,83.21384)(443.32441,75.34935)(431.44998,72.25646)
\curveto(425.8986,70.8105)(413.7966,70.84718)(406.87775,72.33096)
\lineto(406.87775,72.33095)
\closepath
\moveto(188.8917,165.90066)
\curveto(187.93006,168.67718)(185.17224,175.08314)(182.76319,180.13614)
\curveto(169.91032,207.09522)(152.2723,224.24987)(132.24135,229.27344)
\curveto(126.23479,230.77983)(113.89622,230.95073)(108.33333,229.60459)
\curveto(102.86031,228.28019)(96.27481,224.55951)(93.29714,221.10943)
\curveto(89.73965,216.98754)(87.87133,213.1223)(83.69204,201.23809)
\curveto(77.71261,184.23504)(73.97425,177.83656)(65.88237,170.75554)
\curveto(62.2857,167.60818)(52.43346,161.60034)(50.86877,161.60034)
\curveto(50.62042,161.60034)(50.41394,162.35757)(50.40992,163.28308)
\curveto(50.40592,164.20858)(49.59799,166.69391)(48.61455,168.80601)
\lineto(46.82648,172.64622)
\lineto(48.64855,173.47641)
\curveto(52.09783,175.048)(58.40286,180.16968)(62.6059,184.8142)
\curveto(68.38842,191.20408)(71.97023,197.39841)(79.15906,213.44096)
\curveto(86.33809,229.46164)(89.35664,234.68119)(94.2921,239.60832)
\curveto(108.90392,254.1955)(138.50338,251.55704)(165.52768,233.25846)
\curveto(177.8262,224.93093)(193.56505,209.92256)(203.92492,196.64332)
\lineto(208.96821,190.17885)
\lineto(208.96821,175.51566)
\lineto(208.96821,160.85246)
\lineto(199.80417,160.85246)
\lineto(190.64013,160.85246)
\lineto(188.8917,165.90066)
\closepath
\moveto(475.2395,262.08752)
\curveto(463.79764,265.95123)(457.50204,271.81213)(453.8741,281.97767)
\curveto(451.34149,289.07407)(449.81417,290.5359)(444.93247,290.5359)
\curveto(440.2086,290.5359)(438.01291,288.08815)(433.72526,278.04216)
\curveto(432.36489,274.8548)(430.292,270.72711)(429.11882,268.86953)
\curveto(427.0047,265.52205)(422.24749,261.14326)(421.12965,261.51588)
\curveto(420.81017,261.62237)(420.3724,262.76169)(420.15684,264.04769)
\curveto(419.94128,265.3337)(419.35557,267.30209)(418.85526,268.4219)
\curveto(417.97755,270.3864)(417.98156,270.47803)(418.96935,271.03083)
\curveto(422.55131,273.03539)(427.7086,279.19164)(432.89853,287.65809)
\curveto(438.70667,297.13301)(442.15194,300.00596)(446.63177,299.10999)
\curveto(449.03569,298.62921)(450.55425,296.81591)(451.97057,292.735)
\curveto(455.93359,281.31614)(461.63477,275.74566)(473.15251,272.03863)
\curveto(475.04309,271.43015)(477.43827,270.78354)(478.47514,270.60172)
\curveto(481.58609,270.05622)(481.71243,269.82137)(480.5461,266.75185)
\curveto(479.96992,265.23547)(479.4985,263.28342)(479.4985,262.41397)
\curveto(479.4985,260.56987)(479.69026,260.58457)(475.2395,262.08752)
\closepath
}
}
{
\pscustom[linewidth=2,linecolor=black,linestyle=dashed,dash=4 4]
{
\newpath
\moveto(210,310)
\lineto(210,10)
\moveto(330,310)
\lineto(330,10)
}
}
{
\pscustom[linewidth=1.5,linecolor=midgrey]
{
\newpath
\moveto(415,270)
\curveto(431.29669,276.08791)(433.46148,300.19231)(445,300)
\curveto(454.99861,299.83335)(450,290)(460,280)
\curveto(467.90569,272.0943)(485,270)(485,270)
\moveto(490,160)
\curveto(440,140)(470,90)(420,90)
\curveto(370,90)(350,160)(350,160)
\moveto(50,160)
\curveto(100,180)(70,230)(120,230)
\curveto(170,230)(190,160)(190,160)
\moveto(420,260)
\curveto(436.29669,266.08791)(433.46148,290.19231)(445,290)
\curveto(454.99861,289.83335)(450,280)(460,270)
\curveto(467.90569,262.0943)(480,260)(480,260)
}
}
{
\pscustom[linewidth=1.5,linecolor=black]
{
\newpath
\moveto(500,150)
\curveto(450,130)(470,70)(420,70)
\curveto(370,70)(330,130)(330,130)
\lineto(270,10)
\moveto(40,170)
\curveto(90,190)(70,250)(120,250)
\curveto(170,250)(210,190)(210,190)
\lineto(270,310)
\moveto(50,160)
\lineto(490,160)
\moveto(420,260)
\lineto(480,260)
\moveto(55,40)
\lineto(115,40)
}
}
{
\pscustom[linewidth=2,linecolor=midgrey,fillstyle=solid,fillcolor=white]
{
\newpath
\moveto(230,159.99999738)
\curveto(230,148.95430239)(221.045695,139.99999738)(210,139.99999738)
\curveto(198.954305,139.99999738)(190,148.95430239)(190,159.99999738)
\curveto(190,171.04569238)(198.954305,179.99999738)(210,179.99999738)
\curveto(221.045695,179.99999738)(230,171.04569238)(230,159.99999738)
\closepath
\moveto(350,159.99999738)
\curveto(350,148.95430239)(341.045695,139.99999738)(330,139.99999738)
\curveto(318.954305,139.99999738)(310,148.95430239)(310,159.99999738)
\curveto(310,171.04569238)(318.954305,179.99999738)(330,179.99999738)
\curveto(341.045695,179.99999738)(350,171.04569238)(350,159.99999738)
\closepath
}
}
{
\pscustom[linewidth=2,linecolor=black,fillstyle=solid,fillcolor=white]
{
\newpath
\moveto(530,159.99999738)
\curveto(530,148.95430239)(521.045695,139.99999738)(510,139.99999738)
\curveto(498.954305,139.99999738)(490,148.95430239)(490,159.99999738)
\curveto(490,171.04569238)(498.954305,179.99999738)(510,179.99999738)
\curveto(521.045695,179.99999738)(530,171.04569238)(530,159.99999738)
\closepath
\moveto(50,159.99999738)
\curveto(50,148.95430239)(41.045695,139.99999738)(30,139.99999738)
\curveto(18.954305,139.99999738)(10,148.95430239)(10,159.99999738)
\curveto(10,171.04569238)(18.954305,179.99999738)(30,179.99999738)
\curveto(41.045695,179.99999738)(50,171.04569238)(50,159.99999738)
\closepath
\moveto(420,259.99999738)
\curveto(420,248.95430239)(411.045695,239.99999738)(400,239.99999738)
\curveto(388.954305,239.99999738)(380,248.95430239)(380,259.99999738)
\curveto(380,271.04569238)(388.954305,279.99999738)(400,279.99999738)
\curveto(411.045695,279.99999738)(420,271.04569238)(420,259.99999738)
\closepath
\moveto(520,259.99999738)
\curveto(520,248.95430239)(511.045695,239.99999738)(500,239.99999738)
\curveto(488.954305,239.99999738)(480,248.95430239)(480,259.99999738)
\curveto(480,271.04569238)(488.954305,279.99999738)(500,279.99999738)
\curveto(511.045695,279.99999738)(520,271.04569238)(520,259.99999738)
\closepath
\moveto(55,39.99999738)
\curveto(55,28.95430239)(46.045695,19.99999738)(35,19.99999738)
\curveto(23.954305,19.99999738)(15,28.95430239)(15,39.99999738)
\curveto(15,51.04569238)(23.954305,59.99999738)(35,59.99999738)
\curveto(46.045695,59.99999738)(55,51.04569238)(55,39.99999738)
\closepath
\moveto(155,39.99999738)
\curveto(155,28.95430239)(146.045695,19.99999738)(135,19.99999738)
\curveto(123.954305,19.99999738)(115,28.95430239)(115,39.99999738)
\curveto(115,51.04569238)(123.954305,59.99999738)(135,59.99999738)
\curveto(146.045695,59.99999738)(155,51.04569238)(155,39.99999738)
\closepath
}
}
{
\put(90,140){$R_{a,1}^*$}
\put(380,170){$R_{a,2}^*$}
\put(90,200){$R_{b,1}^*$}
\put(380,110){$R_{b,2}^*$}
\put(140,250){$R_{b,1}'$}
\put(460,80){$R_{b,2}'$}
}
\end{pspicture}
\caption{\label{edgespic2}}
\end{figure}

Case 2: The component of $R_{b,2}^*$ coincides with that of $R_{a,2}^*$, but those of $R_{a,1}^*,R_{b,1}^*$ do not coincide. Then $R_{a,1}\neq R_{b,1}$. Again isotope $R_{b,1}'$ as above. 
The component of $R_{b,2}'$ that meets $K_2$ is isotopic in $M_2$ to that of $R_{a,2}^*$, meaning there is a product region $N$ between them in $M_2$.
Note that the interior of $N$ is disjoint from the three surfaces $R_{a,2}^*,R_{b,2}^*,R_{b,2}'$. Once we know which side of $R_{a,2}^*$ the product region $N$ lies (this is well-defined, by Lemma \ref{makedisjointlemma}), there is only one possibility for the rectangle joining the surfaces $R_{b,1}',R_{b,2}'$, as in Case 1. 

If $R_{a,2}=R_{b,2}$ then we may further assume that $R_{b,1}\leq R_{a,1}$, and in particular that the boundary of $R_{b,1}'$ lies below that of $R_{a,1}^*$. If $N$ lies below $R_{a,2}^*$ then the rectangle is flat, so $n_b=0=n_a$. If $N$ lies above $R_{a,2}^*$ then the rectangle runs in the negative direction around $K_1\# K_2$, from below $R_{a,1}^*$ to above $R_{a,2}^*$. Hence $n_b=-1=n_a-1$. Note that this satisfies the condition in Corollary \ref{edgescor} with the two surfaces switched. 

Suppose instead that $R_{a,2}\neq R_{b,2}$. If $N$ lies below $R_{a,2}^*$ then there are two possible situations. One is that $R_{b,1}\leq R_{a,1}$ and $n_b=n_a$. The other is that $R_{a,1}\leq R_{b,1}$ and $n_b=n_a+1$.
It therefore remains to rule out the possibility that $N$ lies above $R_{a,2}^*$. 
For this we must return to the definitions of $R_{a,2}^*,R_{b,2}^*$. Originally we chose these to be $\partial$--almost disjoint, with $R_{b,2}^*\leq_{\partial} R_{a,2}^*$. As they now coincide, the components that meet $K_2$ were isotopic by an isotopy keeping their boundaries fixed. Corollary \ref{isotopyrelbdycor} therefore shows there was a product region $N'$ between these components that lay below $R_{a,2}^*$ and above $R_{b,2}^*$.
However, Lemma \ref{makedisjointlemma} says that the side of $R_{a,2}^*$ on which $N$ lies is determined by the choice of surfaces $R_{a,2},R_{b,2}$. Therefore $N$ cannot lie above $R_{a,2}^*$.

Case 3: The component of $R_{b,1}$ coincides with that of $R_{a,1}^*$, but those of $R_{a,2}^*,R_{b,2}$ do not coincide. This is similar to case 2.

Case 4: Both pairs of components coincide. 
This case again uses the same ideas as cases 1 and 2. The only situation that is very different is when $R_{a,1}=R_{b,1}$ and $R_{a_2}=R_{b,2}$, when we must show that $n_a\neq n_b$. However this is true since $\Psi(R_{a,1},R_{a,2},n_a)\neq\Psi(R_{b,1},R_{b,2},n_b)$.
\end{proof}

Figure \ref{edgespic3} is a schematic picture of the local structure of $\ms(L)$. Figure \ref{edgespic3}a focuses on the edges radiating from a single vertex. Figure \ref{edgespic3}b shows one of the smaller cubes in Figure \ref{edgespic3}a, with all edges included.
\begin{figure}[htbp]
\centering
(a)
\psset{xunit=.5pt,yunit=.5pt,runit=.5pt}
\begin{pspicture}(310,300)
{
\pscustom[linewidth=1.5,linecolor=gray]
{
\newpath
\moveto(270,90.00001)
\lineto(180,90.00001)
\lineto(90,90.00001)
\lineto(90,180.00003)
\lineto(90,270.00003)
\lineto(180,270.00003)
\lineto(270,270.00003)
\lineto(270,180.00003)
\lineto(270,90.00001)
\closepath
\moveto(180,270.00003)
\lineto(180,90.00001)
\moveto(270,180.00003)
\lineto(90,180.00003)
\moveto(240,110.00001)
\lineto(150,110.00001)
\lineto(60,110.00001)
\lineto(60,200.00003)
\lineto(60,290.00003)
\lineto(150,290.00003)
\lineto(240,290.00003)
\lineto(240,200.00003)
\lineto(240,110.00001)
\closepath
\moveto(150,290.00003)
\lineto(150,110.00001)
\moveto(240,200.00003)
\lineto(60,200.00003)
\moveto(300,250.00001)
\lineto(240,290.00001)
\moveto(210,250.00001)
\lineto(150,290.71429)
\moveto(120,250.00001)
\lineto(60,290.71429)
\moveto(120,160.00001)
\lineto(60,200.00001)
\moveto(120,69.99999)
\lineto(60,110.00001)
\moveto(210,69.99999)
\lineto(150,110.00001)
\moveto(300,69.99999)
\lineto(240,110.00001)
\moveto(300,160.00001)
\lineto(240,200.00001)
\moveto(210,160.00001)
\lineto(150,200.00001)
}
}
{
\pscustom[linewidth=2,linecolor=black]
{
\newpath
\moveto(180,180.00001)
\lineto(90,180.00001)
\moveto(150,200)
\lineto(180,180)
\moveto(210,160.00001)
\lineto(180,180.00001)
\moveto(270,180.00001)
\lineto(180,180.00001)
\moveto(60,200.00001)
\lineto(180,180.00001)
\moveto(270,270.00001)
\lineto(180,180.00001)
\moveto(180,270.00001)
\lineto(180,180.00001)
\moveto(150,290.00001)
\lineto(180,180.00001)
\moveto(240,290.00001)
\lineto(180,180.00001)
\moveto(180,89.99999)
\lineto(180,180.00001)
\moveto(300,160.00001)
\lineto(180,180.00001)
\moveto(180,180.00001)
\lineto(210,69.99999)
\moveto(90,90.00001)
\lineto(180,180.00001)
\moveto(120,70.00001)
\lineto(180,180.00001)
}
}
{
\pscustom[linewidth=1.5,linecolor=gray]
{
\newpath
\moveto(60,110.00001)
\lineto(60,290.00001)
\lineto(240,290.00001)
\moveto(90,90.00001)
\lineto(90,270.00001)
\lineto(270,270.00001)
\moveto(300,69.99999)
\lineto(210,69.99999)
\lineto(120,69.99999)
\lineto(120,160.00001)
\lineto(120,250.00001)
\lineto(210,250.00001)
\lineto(300,250.00001)
\lineto(300,160.00001)
\lineto(300,69.99999)
\closepath
\moveto(210,250.00001)
\lineto(210,69.99999)
\moveto(300,160.00001)
\lineto(120,160.00001)
}
}
{
\pscustom[linewidth=1,linecolor=black]
{
\newpath
\moveto(40,90.00001)
\lineto(100,50.00001)
\moveto(120,40.00001)
\lineto(300,40.00001)
\moveto(30,110.00001)
\lineto(30,290.00001)
}
}
{
\pscustom[linewidth=3,linecolor=black,fillstyle=solid,fillcolor=black]
{
\newpath
\moveto(91.67949706,55.54701196)
\lineto(86.13249509,54.43761157)
\lineto(100,50.00001)
\lineto(90.57009666,61.09401392)
\lineto(91.67949706,55.54701196)
\closepath
\moveto(290,40.00001)
\lineto(286,36.00001)
\lineto(300,40.00001)
\lineto(286,44.00001)
\lineto(290,40.00001)
\closepath
\moveto(30,280.00001)
\lineto(34,276.00001)
\lineto(30,290.00001)
\lineto(26,276.00001)
\lineto(30,280.00001)
\closepath
}
}
{
\put(5,210){$n$}
\put(45,45){$R_2$}
\put(170,15){$R_1$}
}
\end{pspicture}
(b)
\psset{xunit=.5pt,yunit=.5pt,runit=.5pt}
\begin{pspicture}(310,300)
{
\pscustom[linewidth=1.5,linecolor=gray]
{
\newpath
\moveto(300,69.999977)
\lineto(240,109.999997)
\moveto(120,69.999997)
\lineto(240,289.999997)
\moveto(240,289.999997)
\lineto(300,69.999997)
\moveto(240,109.999997)
\lineto(150,109.999997)
\lineto(60,109.999997)
\lineto(60,200.000017)
\lineto(60,290.000017)
\lineto(150,290.000017)
\lineto(240,290.000017)
\lineto(240,200.000017)
\lineto(240,109.999997)
\closepath
\moveto(300,69.999997)
\lineto(60,109.999997)
\lineto(240,288.571427)
}
}
{
\pscustom[linewidth=2,linecolor=black]
{
\newpath
\moveto(300,69.999977)
\lineto(210,69.999977)
\lineto(120,69.999977)
\lineto(120,159.999997)
\lineto(120,249.999997)
\lineto(210,249.999997)
\lineto(300,249.999997)
\lineto(300,159.999997)
\lineto(300,69.999977)
\closepath
\moveto(300,249.999997)
\lineto(240,289.999997)
\lineto(60,289.999997)
\moveto(120,249.999997)
\lineto(60,289.999997)
\moveto(120,69.999977)
\lineto(60,109.999997)
\lineto(60,289.999997)
\moveto(300,249.999997)
\lineto(120,69.999997)
\moveto(60,289.999997)
\lineto(120,69.999997)
\moveto(60,289.999997)
\lineto(300,249.999997)
}
}
{
\pscustom[linewidth=1,linecolor=black]
{
\newpath
\moveto(40,90.00001)
\lineto(100,50.00001)
\moveto(120,40.00001)
\lineto(300,40.00001)
\moveto(30,110.00001)
\lineto(30,290.00001)
}
}
{
\pscustom[linewidth=3,linecolor=black,fillstyle=solid,fillcolor=black]
{
\newpath
\moveto(91.67949706,55.54701196)
\lineto(86.13249509,54.43761157)
\lineto(100,50.00001)
\lineto(90.57009666,61.09401392)
\lineto(91.67949706,55.54701196)
\closepath
\moveto(290,40.00001)
\lineto(286,36.00001)
\lineto(300,40.00001)
\lineto(286,44.00001)
\lineto(290,40.00001)
\closepath
\moveto(30,280.00001)
\lineto(34,276.00001)
\lineto(30,290.00001)
\lineto(26,276.00001)
\lineto(30,280.00001)
\closepath
}
}
{
\put(5,210){$n$}
\put(45,45){$R_2$}
\put(170,15){$R_1$}
}
\end{pspicture}
\caption{\label{edgespic3}}
\end{figure}

\section{Completing and interpreting the proof}\label{proofsection}

\connectsumthm*
\begin{proof}
Define an ordering $\leq_1$ on $\V(\ms(L_1))\times\V(\mathcal{Z})$ by 
\[(R_b,n_b)\leq_1 (R_a,n_a)\iff \left(R_b\leq R_a\textrm{ and }n_b\leq n_a\right).\]
By Theorem \ref{productcomplextheorem}, 
\[{\big|(\ms(L_1)\times\mathcal{Z},\leq_1)\big|}\cong{\big|(\ms(L_1),\leq)\big|\times\big|(\mathcal{Z},\leq)\big|}.\]
Now define a second ordering $\leq_2$ on $\V(\ms(L_1))\times\V(\mathcal{Z})$ by 
\[
\begin{split}
(R_b,n_b)\leq_2 &(R_a,n_a)\iff\\
&\big((R_b\leq R_a\textrm{ and }n_b=n_a)\textrm{ or }(R_a\leq R_b\textrm{ and }n_a=n_b-1)\big).
\end{split}
\] 
Note that this corresponds to the conditions in Corollary \ref{edgescor}. It can be checked that $\leq_2$ has properties (P1), (P2)$'$, (P3).

Define $\leq_3$ on $\V(\ms(L_1)\times\mathcal{Z})\times\V(\ms(L_2))$ by 
\[
\begin{split}
\big((R_{a,1},n_a),R_{a,2}\big)\leq_3&\big((R_{b,1},n_b),R_{b,2}\big)\iff\\
&\big((R_{a,1},n_a)\leq_2 (R_{b,1},n_b)\textrm{ and }R_{a,2}\leq R_{b,2}\big).
\end{split}
\]
Applying Theorem \ref{productcomplextheorem} again gives that 
\[{\big|\big((\ms(L_1)\!\times\mathcal{Z})\!\times\ms(L_2),\leq_3\!\!\hspace{0.05em}\big)\big|}\cong{\big|(\ms(L_1)\times\mathcal{Z},\leq_2)\big|\times\big|(\ms(L_2),\leq)\big|}.\]
Thus
\[|(\ms(L_1)\!\times\mathcal{Z})\!\times\ms(L_2)|\cong|\ms(L_1)|\times|\ms(L_2)|\times\mathbb{R}.\]

By Lemmas \ref{surjectivelemma} and \ref{injectivelemma}, the map $\psi\colon\V(\ms(L_1))\times\V(\ms(L_2))\times\V(\mathcal{Z})\to\V(\ms(L))$ defined in Definition \ref{psidefn} is a bijection. Recall that 
\[\V\!\big((\ms(L_1)\!\times\mathcal{Z})\!\times\ms(L_2)\big)=\V(\ms(L_1))\times\V(\ms(L_2))\times\V(\mathcal{Z}).\]
From Corollary \ref{edgescor} and Proposition \ref{mainedgesprop} we see that $\psi$ extends to an isomorphism between the 1--skeleta of the complexes $(\ms(L_1)\!\times\mathcal{Z})\!\times\ms(L_2)$ and $\ms(L)$.
It remains only to note that both of these complexes are flag. For $\ms(L)$ this is the case by definition. For $(\ms(L_1)\!\times\mathcal{Z})\!\times\ms(L_2)$ it follows from the fact that the three complexes $\ms(L_1)$, $\ms(L_2)$ and $\mathcal{Z}$ are flag.
\end{proof}

By examining the proof of Theorem \ref{productcomplextheorem} in \cite{MR0050886} we can give the following geometric description of the extension of $\Psi$ to $(\ms(L_1)\!\times\mathcal{Z})\!\times\ms(L_2)$.

\begin{remark}
Let $x\in|\ms(L_1)|\times|\ms(L_2)|\times\mathbb{R}$. Without loss of generality, $\pi_{\mathcal{Z}}(x)\in[0,1)$.
Let $\pi_{\ms(L_1)}(x)=a_0 A_0 +\cdots+a_m A_m$ where $A_i\in\V(\ms(L_1))$ and $a_i>0$ for $0\leq i\leq m$, with $\sum_{i=0}^m{a_i}=1$ and $A_0\leq A_1\leq\cdots\leq A_m$. Similarly let $\pi_{\ms(L_2)}(x)=b_0 B_0 +\cdots+b_n B_n$.

Consider the surfaces $A_0,\ldots,A_m$. As in Lemma \ref{maketransverselemma}, 
they can be positioned in $M_1$ so they are pairwise almost disjoint with simplified intersection. By Proposition \ref{realisedistanceprop}, they then realise their adjacencies. As in the proof of Lemma \ref{makedisjointlemma}, the surfaces can be made disjoint while still realising their adjacencies, and it can be shown that the boundaries of the surfaces occur in order around $\partial M_1$. For $i=0,\ldots,m$, thicken the Seifert surface $A_i$ to a product region $A_i\times [0,a_i]$, and view this as a `continuum of surfaces'.

Do the same for the Seifert surfaces $B_0,\ldots,B_n$ in $M_2$. Glue the thickened surfaces to give thickened Seifert surfaces for $L$ in $M$. In doing so, instead of aligning $A_0\times\{0\}$ with $B_0\times\{0\}$, introduce a shift of length $\pi_{\mathcal{Z}}(x)$. This creates a finite set of vertices of $\ms(L)$, each with a weight given by its thickness.
Applying Lemma \ref{threearcslemma} shows that these vertices span a simplex.
\end{remark}

\section{Incompressible surfaces}\label{incompressiblesection}

In addition to $\ms(L)$, Kakimizu defined a larger complex $\is(L)$, which records all incompressible Seifert surfaces for $L$ rather than just taut ones.

\begin{definition}[see \cite{MR1177053}, \cite{przytycki-2010}]
Let $L$ be a link, and let $M=\Sphere\setminus\nhd(L)$. Define $\is(L)$ of $L$ to be the following flag simplicial complex. Its vertices are ambient isotopy classes of incompressible Seifert surfaces for $L$. Two vertices span an edge if they have representatives $R,R'$ such that a lift of $M\setminus R'$ to the infinite cyclic cover of $M$ intersects exactly two lifts of $M\setminus R$.
\end{definition}

Note that $\ms(L)$ is a subcomplex of $\is(L)$.
Proposition \ref{msdistanceprop} holds for $\is(L)$ as well as for $\ms(L)$, and so do Propositions \ref{disjointinteriorsprop} and \ref{realisedistanceprop}, and Lemma \ref{maketransverselemma}. 
The same is true of Lemmas \ref{orderinglemma1}, \ref{orderdistancelemma} and \ref{orderinglemma2}. 

Let $R$ be an incompressible Seifert surface for $L$. Isotope $R$ to have minimal intersection with $T_0$. Then $R\cap T_0$ is a single arc. Splitting $R$ along $T_0$ gives incompressible Seifert surfaces $R_1,R_2$ for $L_1,L_2$ respectively.

Now consider the converse situation. That is, take incompressible Seifert surfaces $R_1,R_2$ for $L_1,L_2$ respectively, and join them along an arc in $T_0$ to form a Seifert surface $R$ for $L$.

\begin{lemma}
$R$ is incompressible.
\end{lemma}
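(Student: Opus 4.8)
The plan is to assume for contradiction that $R$ is compressible and to analyse a compressing disc by cutting it along the connected-sum annulus. Write $A=T_0\cap M$; this is an annulus properly embedded in $M$ whose core is a meridian of $K_1\#K_2$, and $R\cap A$ is the single spanning arc $\sigma^*$, which I denote $\alpha$. Note that $A$ is incompressible in $M$, since its only essential curve is the core, which is a meridian and hence essential in $M$ because $L$ is non-split; recall also that $M$ is irreducible. Let $D$ be a compressing disc for $R$, so $\partial D\subseteq R$ is essential in $R$ and $\Int D\cap R=\emptyset$. After pushing $\Int D$ off $\partial M$ and making $D$ transverse to $A$, isotope $D$ (keeping $\partial D$ in $R$) so as to minimise $|D\cap A|$. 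Since $\partial D\cap A=\partial D\cap\alpha$ and $\Int D$ meets neither $\partial A$ nor $R$, the intersection $D\cap A$ consists of circles together with arcs having both endpoints on $\alpha$.

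First I would remove the circles. A circle innermost on $D$ bounds a subdisc $D'\subseteq D$ with $\Int D'\cap A=\emptyset$. If that circle is essential in $A$ then $D'$ compresses $A$, contradicting incompressibility of $A$; if it is inessential in $A$ then irreducibility of $M$ lets me isotope $D$ across the resulting sphere to lower $|D\cap A|$, contradicting minimality. Hence $D\cap A$ consists only of arcs. If there are no arcs at all, then $D$ lies wholly in $M_1$ or $M_2$, say $M_1$, with $\partial D\subseteq R_1$; as $\partial D$ bounds no disc in $R\supseteq R_1$ it bounds none in $R_1$, so $D$ compresses $R_1$, contradicting the hypothesis. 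It therefore remains to rule out arcs.

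The crux, which I expect to be the main obstacle, is the outermost-arc step. Choose an arc $\beta$ of $D\cap A$ cutting off a subdisc $D'\subseteq D$ whose interior meets no other arc of $D\cap A$; then $\Int D'\cap A=\emptyset$, so $D'\subseteq M_1$ (say), and $\partial D'=\beta\cup\gamma$, where $\gamma$ is the arc of $\partial D$ in $\partial D'$. By the choice of $\beta$ the interior of $\gamma$ contains no point of $\partial D\cap\alpha$, so $\gamma\subseteq R_1$; likewise $\Int\beta$ misses $\alpha$, so $\beta$ meets $\alpha$ only in its endpoints $p,q$. Let $\alpha'$ be the subarc of $\alpha$ between $p$ and $q$. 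Then $\beta\cup\alpha'$ is a simple closed curve in the annulus $A$, hence either inessential there or a core of $A$. Suppose it is a core; then it is a meridian $m$ of $K_1$. The disc $D'$ gives a homotopy $\gamma\simeq\beta$ rel $\{p,q\}$ in $M_1$, so the loop $\gamma\cup\alpha'$ is freely homotopic in $M_1$ to $\beta\cup\alpha'=m$. But $\gamma\cup\alpha'$ lies on the Seifert surface $R_1$, so it has linking number $0$ with $K_1$, whereas $m$ has linking number $1$; this is a contradiction, and the core case cannot occur. (It is exactly this observation—that a loop on a Seifert surface has zero linking number—that replaces any appeal to $\partial$-incompressibility of $R_1$, which is why the argument works for merely incompressible surfaces.)

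Consequently $\beta\cup\alpha'$ bounds a disc $E\subseteq A$. Pushing $E$ slightly off $A$ into $M_1$, the disc $D''=D'\cup_\beta E$ is embedded in $M_1$ and meets $R_1$ exactly in $\partial D''=\gamma\cup\alpha'\subseteq R_1$. Incompressibility of $R_1$ then forces $\gamma\cup\alpha'$ to bound a disc in $R_1$, and irreducibility of $M$ lets me isotope $D$ across $D''$ to delete the arc $\beta$, once more contradicting minimality of $|D\cap A|$. Thus $D\cap A=\emptyset$, which returns us to the no-arcs case already handled. Since the whole argument concerns a single compressing disc, it applies to each component of $R$ in turn, so no connectivity assumption on $R_1,R_2$ is required, and we conclude that $R$ is incompressible.
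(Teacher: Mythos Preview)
Your argument is careful through the linking-number step, but the last move has a gap. You claim that once $\gamma\cup\alpha'$ bounds a disc $F$ in $R_1$, irreducibility lets you ``isotope $D$ across $D''$ to delete the arc $\beta$''. The sphere $D''\cup F$ does bound a ball $B$, but the remainder $D\setminus D'$ may well meet $B$: because $\beta$ was chosen outermost in $D$ rather than in $A$, other arcs of $D\cap A$ can lie inside $E$, and portions of $\delta=\partial D\setminus\gamma$ can lie inside $F$. These obstruct both the isotopy of $D'$ across $B$ and the alternative surgery that replaces $D'$ by a pushoff of $E$ --- the resulting object need not be embedded, and its putative boundary $\delta\cup\alpha'$ need not even be a simple closed curve in $R$.

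The paper sidesteps this by making the dual choice: it takes the arc $\rho$ outermost in the disc $A\setminus\alpha$ (in the paper's notation, $T_0\cap(M_0\setminus R)$) rather than outermost in $D$. The outermost subdisc $S_T\subset A\setminus\alpha$ then has interior disjoint from $D$, so gluing $S_T$ to either half of $D$ along $\rho$ yields \emph{embedded} discs with boundary on $R$; since $\partial D$ is essential, at least one of these boundaries is essential, and pushing $S_T$ off $A$ gives a compressing disc with strictly fewer intersection arcs --- the desired contradiction. Your linking-number observation remains useful in this rewrite: it shows that no arc of $D\cap A$ together with a subarc of $\alpha$ forms a core of $A$, which is exactly what guarantees $\partial S_T\setminus\rho\subset\alpha$ rather than running along $\partial A$, a point the paper's terse proof passes over in silence.
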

\begin{proof}
Suppose otherwise. Choose a compressing disc $S$ for $R$ that minimises its intersection with $T_0$ over all compressing discs for $R$. Then $S\cap T_0$ does not include any simple closed curves. In addition, $S$ is not disjoint from $T_0$, as otherwise it would be a compressing disc for either $R_1$ or $R_2$. Let $\rho$ be an arc of $S\cap T_0$ that is outermost in the disc $T_0\cap (M_0\setminus R)$. Then $\rho$ cuts off a subdisc $S_T$ of $T_0\cap (M_0\setminus R)$ that is disjoint on its interior from $S$. It also divides $S$ into two discs $S_1$ and $S_2$. Since $|S\cap T_0|$ cannot be reduced, each of the discs $S_1\cup S_T$ and $S_2\cup S_T$ is a compressing disc for $R$. Furthermore, each can be isotoped to have a smaller intersection with $T_0$ than $S$ does, which is a contradiction.
\end{proof}

Now replacing taut Seifert surfaces with incompressible Seifert surfaces in the proof of Theorem \ref{connectsumthm} gives the following.

\begin{theorem}
Let $L_1,L_2$ be non-split, non-fibred, links in $\Sphere$, 
and let $L=L_1\#L_2$. Then $|\is(L)|$ is homeomorphic to $|\is(L_1)|\times|\is(L_2)|\times\mathbb{R}$.
\end{theorem}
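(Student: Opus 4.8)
The plan is to re-run the proof of Theorem \ref{connectsumthm} essentially verbatim, substituting \emph{incompressible} for \emph{taut} throughout, and to locate the precise places where tautness was used so as to check that each may be replaced by a property that incompressible Seifert surfaces also possess. The whole combinatorial skeleton of the argument is insensitive to the substitution: the product-complex machinery of Theorem \ref{productcomplextheorem}, together with the orderings supplied by Lemmas \ref{orderinglemma1}, \ref{orderdistancelemma} and \ref{orderinglemma2}, makes no reference to Euler characteristic, and these ordering lemmas have already been noted to hold for $\is(L)$. Likewise the distance formula (Proposition \ref{msdistanceprop}) and its refinements (Propositions \ref{disjointinteriorsprop}, \ref{realisedistanceprop} and Lemma \ref{maketransverselemma}) have been recorded to carry over. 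Hence the ordered complexes $(\is(L_1),\leq)$, $(\is(L_2),\leq)$ and $\mathcal{Z}$ are defined exactly as before, and the target complex $(\is(L_1)\times\mathcal{Z})\times\is(L_2)$ is built by the same formulae.

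The two places where tautness did real work are the compatibilities of Seifert surfaces with the decomposing sphere $T_0$: that cutting a taut surface for $L$ along $T_0$ yields taut surfaces for $L_1,L_2$, and that gluing such surfaces along a rectangle yields a taut surface for $L$. These are precisely the two facts now supplied for incompressible surfaces, by the splitting remark preceding this theorem and by the gluing lemma just proved. With these in hand I would define the analogue of $\Psi$ exactly as in Definition \ref{psidefn}, gluing $R_1^*$ and $R_2^*$ by a rectangle winding $n$ times, the gluing lemma guaranteeing an incompressible Seifert surface for $L$. Surjectivity (Lemma \ref{surjectivelemma}) then follows by splitting an incompressible surface for $L$ along $T_0$, and injectivity (Lemma \ref{injectivelemma}) by the same product-region argument, whose only geometric inputs are Proposition \ref{productregionsprop} and the non-fibred hypothesis. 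The edge correspondence (Corollary \ref{edgescor} and Proposition \ref{mainedgesprop}), the choice of representatives (Proposition \ref{representativesprop}) and the supporting Lemmas \ref{almostdisjointlemma}, \ref{makedisjointlemma} and \ref{threearcslemma} copy over unchanged, since each invokes only incompressibility together with the product-region results of Section \ref{mfldssection}. Feeding the resulting vertex bijection into Theorem \ref{productcomplextheorem} as in the proof of Theorem \ref{connectsumthm} then yields the simplicial isomorphism of flag complexes, and hence the homeomorphism of realisations.

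The main obstacle is that all of the product-region technology, namely Proposition \ref{productregionsprop}, Proposition \ref{surfaceinproductprop}, Corollary \ref{isotopyrelbdycor} and everything downstream, requires the surfaces to be \emph{$\partial$--incompressible} as well as incompressible; for taut surfaces this was guaranteed by the remark following Proposition \ref{productregionsprop}, which silently used maximality of Euler characteristic. Thus the one point genuinely needing verification is that an incompressible Seifert surface for a non-split link is automatically $\partial$--incompressible. I would argue this directly: the boundary of such a surface is the $0$--framed longitude on each boundary torus, so a $\partial$--compressing disc $D$ would have boundary arc $\beta\subset\partial M$ joining two points of a single longitude curve, and boundary-compressing along $D$ either produces a genuine compressing disc for the surface, contradicting incompressibility, or alters the boundary slope, which is impossible for a surface whose boundary must remain the longitude. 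Once $\partial$--incompressibility is secured, no step of the original proof uses maximality of Euler characteristic beyond the splitting and gluing facts already established, so the substitution goes through and the theorem follows.
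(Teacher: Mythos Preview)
Your proposal is correct and follows exactly the paper's approach: the paper's proof is the single sentence ``replacing taut Seifert surfaces with incompressible Seifert surfaces in the proof of Theorem \ref{connectsumthm},'' relying on the splitting remark and the gluing lemma established just before the theorem, together with the noted validity of the relevant propositions and ordering lemmas for $\is(L)$. Your treatment is in fact more careful than the paper's, since you explicitly flag and address the $\partial$--incompressibility requirement that the paper leaves implicit; your sketched argument for it is in the right direction, though the cleanest justification is the standard fact that in an irreducible manifold with incompressible toral boundary an incompressible surface is $\partial$--incompressible unless it has a $\partial$--parallel annulus component, which a Seifert surface cannot have since its boundary meets each torus in a single longitude.
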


\bibliography{connectedsumsrefs}
\bibliographystyle{hplain}

\bigskip
\noindent
Mathematical Institute

\noindent
University of Oxford

\noindent
24--29 St Giles'

\noindent
Oxford OX1 3LB

\noindent
England

\smallskip
\noindent
\textit{jessica.banks[at]lmh.oxon.org}

\end{document}